\numberwithin{equation}{section}
\def\H{\mathbb{H}}
\newtheorem{ackn}{Acknowledgments\!}
\def\00{{\bf 0}}
\def\RR{\mathds R}
\def\ricc{\mathrm{Ric}}
\newcommand{\vol}{\mathrm{Vol}\,}
\newcommand{\eps}{{\varepsilon}}
\newtheorem*{theorem*}{Theorem}
\newtheorem{theorem}{Theorem}[section]
\newtheorem{lemma}[theorem]{Lemma}
\newtheorem{proposition}[theorem]{Proposition}
\newtheorem{corollary}[theorem]{Corollary}
\newtheorem{remark}[theorem]{Remark}
\begin{document}

 \title[Liouville Theorems on pseudohermitian manifolds]{Liouville Theorems on pseudohermitian manifolds with nonnegative Tanaka-Webster curvature}

  \date{}

\author{Giovanni Catino, Dario D. Monticelli, Alberto Roncoroni\\ and Xiaodong Wang}

\address{G. Catino, Dipartimento di Matematica, Politecnico di Milano, Piazza Leonardo da Vinci 32, 20133, Milano, Italy.}
\email{giovanni.catino@polimi.it}

\address{D. Monticelli, Dipartimento di Matematica, Politecnico di Milano, Piazza Leonardo da Vinci 32, 20133, Milano, Italy.}
\email{dario.monticelli@polimi.it}

\address{A. Roncoroni, Dipartimento di Matematica, Politecnico di Milano, Piazza Leonardo da Vinci 32, 20133, Milano, Italy.}
\email{alberto.roncoroni@polimi.it}

\address{X. Wang, Department of Mathematics, Michigan State University, East Lansing, Michigan, USA.}
\email{xwang@msu.edu}

\begin{abstract} In this paper we study positive solutions to the CR Yamabe equation in noncompact $(2n+1)$-dimensional Sasakian manifolds with nonnegative curvature. In particular, we show that the Heisenberg group $\mathbb{H}^1$ is the only (complete) Sasakian space with nonnegative Tanaka-Webster scalar curvature admitting a (nontrivial) positive solution. Moreover, under some natural assumptions, we prove this strong rigidity result in higher dimensions, extending the celebrated Jerison-Lee's result to curved manifolds. 
\end{abstract}

\maketitle

\begin{center}

\noindent{\it Key Words:} Sasakian manifolds, subelliptic semilinear equation, critical exponent, Liouville theorem, CR Yamabe problem, Folland-Stein-Sobolev inequality.

\bigskip

\noindent{\bf AMS subject classification:} 35J61, 32V20, 35B33, 53C25, 58J60

\end{center}

\

\section{Introduction}

\

The \emph{CR Yamabe equation} is the following subelliptic semilinear equation 
\begin{equation}\label{eq_Heisenberg}
-\Delta_{b} u =2 n^2 u^{\frac{n+2}{n}}\, \quad \text{in } \mathbb{H}^{n}\, ,
\end{equation}
where $\mathbb{H}^n$ is the \emph{Heisenberg group} 
\[
\mathbb{H}^{n}=\left\{  \xi=\left(  z,t\right)  \, : \, z\in\mathbb{C}^{n} \, ,t\in\mathbb{R}\right\}\, ,
\] 
and $\Delta_{b}u=u_{\alpha\bar{\alpha}}+u_{\bar\alpha\alpha}$ is the Heisenberg Laplacian (or sub-Lapacian). We refer to Section \ref{prel} for precise definitions and details. In \cite{JL}, Jersion and Lee obtained the following Liouville theorem: the only positive solution to \eqref{eq_Heisenberg} with \emph{finite energy}, i.e. 
$$
u\in L^{\frac{2n+2}{n}}(\mathbb{H}^n)
$$  
are the so-called \emph{Jerison-Lee bubbles} given by 
$$
\mathcal{U}_{\lambda,\mu}(z,t)=\frac{c_n}{\left| t+i|z|^2+z\cdot \mu +\lambda \right|^{n}}\, , $$ 
 for some $\lambda\in\mathbb{C}$, $\mu\in\mathbb{C}^n$ such that $\mathrm{Im}(\lambda)>\frac{\vert\mu\vert^2}{4}$ and for some explicit constant $c_n=c(n,\lambda)>0$. Actually, these functions corresponds to the extremals of the Folland-Stein-Sobolev inequality (see \cite{FS} and \cite{JL}) and are related to the CR Yamabe problem in the Heisenberg group (see e.g. \cite{ChCh,CMY,Gam1,Gam2,JL_CR1,JL,JL_CR2,W1} and the references therein). 
 
The Liouville theorem in \cite{JL} has been extended in various directions (see e.g. \cite{BP,GarVas} as well as \cite{W1,W2} on compact pseudohermitian manifolds with zero torsion). Very recently, in \cite{CLMR} the Jerison and Lee's result has been proved in $\mathbb{H}^1$ for \emph{all} positive solutions to \eqref{eq_Heisenberg} and in $\mathbb{H}^n$, with $n\geq 2$, under additional assumptions on the behavior of the solution at infinity (see \cite{CLMR} and \cite{flyvet}). 

Moreover, in the subcritical case, i.e. 
\begin{equation}\label{eq_Heisenberg}
-\Delta_{b} u =2 n^2 u^{q}\, \quad \text{in } \mathbb{H}^{n}\, ,
\end{equation}
with $1<q<\frac{n+2}{n}$ it is known that the only nonnegative solution is $u\equiv 0$ in $\mathbb{H}^n$ (see \cite{OuMa}).

\

Before presenting our main results we mention the Riemannian analogue of Liouville theorems for the following semilinear elliptic equation
\begin{equation}\label{eq_manifold}
\begin{cases}
-\Delta_g u=u^{q}\, \quad \text{in } M\, , \\
u\geq 0
\end{cases}
\end{equation}
where $(M^n,g)$ is a smooth, complete, noncompact, $n-$dimensional, with $n\geq 3$, Riemannian manifold with nonnegative  Ricci curvature, $\Delta_g$ denotes the Laplace-Beltrami operator and the exponent $q$ is subcritical, i.e. 
\begin{equation}\label{subcrit}
1<q< \dfrac{n+2}{n-2}\, ,
\end{equation}
or critical, i.e.
\begin{equation}\label{crit}
q=\dfrac{n+2}{n-2}\, . 
\end{equation}
We mention that the critical equation \eqref{eq_manifold} with \eqref{crit} arises in the well-known Yamabe problem of prescribing the scalar curvature of a conformal metric when the original one is scalar flat (see \cite{LeePar} for further details). 

In the Euclidean setting, problem \eqref{eq_manifold} is now well understood. In \cite{GS}, Gidas and Spruck showed that the only solutions to \eqref{eq_manifold} in the subcritical case \eqref{subcrit} is $u\equiv 0$ in $\mathbb{R}^n$. In \cite{CGS} (see also \cite{ChenLi,CLO,LiZhang}  and \cite{GNN,Obata} for previous important results), Caffarelli, Gidas and Spruck proved that any solution to \eqref{eq_manifold} in the critical case \eqref{crit} is radial and is given by 
\begin{equation}\label{AT_Eucl}
\mathcal V_{\lambda,x_0} (x) := \left( \frac{\lambda\sqrt{n(n-2)}}{\lambda^2 + |x-x_0|^2} \right)^{\frac{n-2}{2}} \,,
\end{equation}
for some $\lambda >0$ and $x_0\in\mathbb{R}^n$. The functions \eqref{AT_Eucl} were independently constructed in \cite{Aubin,Rod,Talenti} as minimizers of the Sobolev quotient 
$$
S_g(M):=\inf_{0\not\equiv u\in\mathcal{D}^{1,2}(M)}\frac{\int_{M} |\nabla u|^2\, dV_g}{\left(\int_{M} |u|^{\frac{2n}{n-2}}\, dV_g\right)^{\frac{n-2}{n}}}\, ,
$$
where $dV_g$ denotes the canonical volume element and 
$$
\mathcal{D}^{1,2}(M)=\left\{ u\in L^{\frac{2n}{n-2}}(M)\, : \, |\nabla u|\in L^2(M)\right\}\,, 
$$
with $M=\mathbb{R}^n$ (see \cite{Ron} for further details). For this reason the functions \eqref{AT_Eucl} are usually called {\em Aubin-Talenti bubbles}. Before passing to the Riemannian setting, we mention that similar Liouville theorems holds when the operator is the $p-$Laplacian, with $1<p<n$, and the exponent $\frac{n+2}{n-2}$ is replaced by $\frac{np}{n-p}-1$ (we refer to \cite{CMR,CFR,Ou,Sci,SZ,Vet,Vet_bis} for further details).

Extending such Liouville theorems for the critical equation to the case of more general Riemannian manifolds turns out to be a challenging and still open problem (in the subcritical case the result in \cite{GS} works also in any complete noncompact Riemannian manifold with nonnegative Ricci curvature). The conjecture is the following strong rigidity result: 

\

\noindent {\bf Conjecture:} {\em
let $(M^n,g)$ be a complete noncompact Riemannian manifold  with nonnegative Ricci curvature and let $u\in C^2(M)$ be a solution to 
\begin{equation}\label{eq_man_crit}
\begin{cases}
-\Delta_g u=u^{\frac{n+2}{n-2}}\, \quad \text{in } M\, , \\
u\geq 0\,,
\end{cases}
\end{equation}
then either $u\equiv 0$ in $M$ or $(M,g)$ is isometric to $\mathbb{R}^n$ with the Euclidean metric and $u$ is an Aubin-Talenti bubble.} 

\

This theorem is known to hold when the dimension of the Riemannian manifold is $n=3,4,5$ (see \cite{CaMo,CiFaPo} and also the already cited \cite{CGS,ChenLi,CLO,LiZhang,Ou,Vet_bis} for the Euclidean case) and holds under additional assumptions, such as $u\in\mathcal{D}^{1,2}(M)$ (or even weaker ones), in every dimension $n\geq 6$. We refer to the papers \cite{CaMo, CiFaPo, FMM, Ou, Vet_bis} and also for further details. Finally, the rigidity result still holds true on Cartan-Hadamard manifolds, assuming that the solution to \eqref{eq_man_crit} is either radially symmetric and $u\in\mathcal{D}^{1,2}(M)$ or a minimizer to the Sobolev quotient (see \cite{MurSoa} for further details).

\

In this paper,  we consider the critical subelliptic semilinear equation on complete Sasakian manifolds with nonnegative Tanaka-Webster Ricci curvature (see Section \ref{prel}). In particular, given a $(2n+1)$-dimensional complete Sasakian manifold $(M^{2n+1}, \theta, J, g)$ we consider the CR Yamabe equation
\begin{equation}\label{eqcritn_general}
-\Delta_b u = 2n^2 u^{\frac{n+2}{n}}\, \quad \text{in } M^{2n+1}\, ,
\end{equation}
where $\Delta_{b}u=u_{\alpha\bar{\alpha}}+u_{\bar\alpha\alpha}$ is the sub-Laplacian (see Section \ref{prel}).

\

Our main results are the following. The first is a characterization of the Heisenberg group $\mathbb{H}^1$ as the only (complete) three-dimensional Sasakian space with nonnegative Tanaka-Webster scalar curvature admitting a (nontrivial) positive solution to the CR Yamabe equation, without any further assumptions. This theorem generalizes previous results in \cite{CLMR, CaMo}. 
\begin{theorem}\label{teo1}
 Let $(M^{3}, \theta, J, g)$ be a $3$-dimensional complete Sasakian manifold with nonnegative Tanaka-Webster scalar curvature and let $u$ be a nonnegative solution to the following equation
\begin{equation}\label{eqcrit1}
-\Delta_b u = 2u^{3}\, \quad \text{in } M^3.
\end{equation}
Then either $u\equiv 0$ or $(M^{3}, \theta, J, g)$ is CR isometric to $\H^1$ with its standard structures and $u$ is a Jerison-Lee bubble, that is $$u \equiv \mathcal{U}_{\lambda,\mu},$$
 for some $\lambda,\mu\in\mathbb{C}$ such that $\mathrm{Im}(\lambda)>\frac{\vert\mu\vert^2}{4}$.
\end{theorem}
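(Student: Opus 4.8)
The plan is to prove the statement by an integrated Jerison--Lee--type identity whose remainder inherits a favorable sign from the curvature hypothesis. First I would dispose of the trivial case: since $u\ge 0$ satisfies $\Delta_b u=-2u^3\le 0$, the function $u$ is $\Delta_b$--superharmonic, so by the subelliptic strong maximum principle either $u\equiv 0$ or $u>0$ everywhere; I assume the latter. I would then perform the CR conformal change $\hat\theta=u^{2}\theta$ (the exponent $2/n$ specialized to $n=1$). Feeding $-\Delta_b u=2u^3$ into the transformation law of the CR Yamabe operator $L_b=-4\Delta_b+W$, where $W$ is the Tanaka--Webster scalar curvature of $\theta$, one finds that the rescaled scalar curvature obeys $\hat W=8+W\,u^{-2}\ge 8>0$. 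Producing a strictly positive, lower--bounded curvature for $\hat\theta$ out of the mere hypothesis $W\ge0$ is precisely the mechanism that will drive the rigidity.

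The core is a three--dimensional Jerison--Lee identity carrying a curvature correction. In CR dimension $n=1$ the pseudohermitian Ricci tensor has no trace--free part, so the only obstruction to $\hat\theta$ being the standard spherical structure is encoded in a single second--order tensor $\hat E_{11}$ combining the trace--free Hessian of the conformal factor with the pseudohermitian torsion $\hat A_{11}$. The plan is to exhibit a vector field $V$, built from $u$ and its covariant derivatives up to second order relative to $\theta$, such that
\[
\Div V = c_1\,\big|\hat E_{11}\big|^2 + c_2\,W\,\Phi(u,\nabla_b u),
\]
with $c_1,c_2>0$ and $\Phi\ge 0$. Deriving this is a long but essentially routine computation: one commutes covariant derivatives via the structure equations, uses the Sasakian condition $A_{\alpha\beta}=0$ of the background $\theta$ to kill its torsion contributions, and completes squares so that every surviving term is manifestly nonnegative, the curvature term inheriting its sign from $W\ge0$.

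The hard part will be integrating this identity over the complete noncompact $M$ with no a priori finite--energy or decay assumption, exactly as in the $\mathbb{H}^1$ case of \cite{CLMR}. I would pair $\Div V$ with a family of cutoffs $\phi_R$ supported in balls of radius $R$ for the Carnot--Carath\'eodory (or adapted Riemannian) distance, and control the error $\int V\cdot\nabla_b\phi_R$. This requires combining subelliptic Harnack and gradient estimates, to bound $|\nabla_b u|$ and the second derivatives entering $V$ in terms of $u$ itself, with volume--growth control coming from $W\ge0$ together with the Sasakian geometry, so that the weighted boundary integral actually tends to $0$ as $R\to\infty$. Showing that this remainder genuinely vanishes in the limit, rather than merely staying bounded, is the true obstacle, since the critical exponent leaves no margin to spare.

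Once the error is shown to disappear, nonnegativity of the right--hand side forces $\hat E_{11}\equiv 0$ and $W\,\Phi\equiv 0$. The vanishing of $\hat E_{11}$ is the overdetermined system whose solutions are exactly the Jerison--Lee profiles, and in particular it yields $\hat A_{11}\equiv 0$, so that the underlying CR structure of $M$ is spherical and $\hat\theta$ has constant Webster curvature $\hat W\equiv 8$; feeding this back gives $W\,u^{-2}\equiv 0$, hence $W\equiv 0$ and $(M,\theta)$ is scalar flat. Since the profile is nonconstant, $\Phi>0$ on a dense set, confirming $W\equiv0$. Finally, CR--flatness together with completeness and noncompactness upgrades the local model to a global CR isometry, identifying $M$ with $S^3$ minus a point, i.e. with $\mathbb{H}^1$ endowed with its standard structures, and tracing the identification back shows $u=\mathcal{U}_{\lambda,\mu}$ for admissible $\lambda,\mu$. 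This completes the proof.
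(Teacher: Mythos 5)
Your overall architecture is the same as the paper's: strong maximum principle to reduce to $u>0$, a Jerison--Lee--type divergence identity whose right-hand side is a sum of squares plus a curvature term with sign inherited from $W\ge 0$ (the paper's Proposition 2.1, with $\mathcal{Q}\ge0$), cutoff integration, and then the overdetermined system $\hat E_{11}\equiv 0$, vanishing torsion, and Tanaka--Webster flatness leading to $\H^1$ and the Jerison--Lee bubbles. The final rigidity step and the conformal bookkeeping ($\hat\theta=u^2\theta$, $\hat W=8+Wu^{-2}$ versus the paper's substitution $e^f=u^{1/n}$) are equivalent and fine. The problem is that the step you yourself call ``the true obstacle'' --- making the cutoff error vanish with no finite-energy or decay hypothesis --- is exactly the content of the theorem in dimension three, and your proposal offers only a declaration of intent (``Harnack and gradient estimates'' plus ``volume-growth control''), not an argument. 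As you concede, critical scaling gives at best boundedness of $\int V\cdot\nabla_b\phi_R$, never decay; pointwise gradient bounds of the required uniform strength for the critical subelliptic equation on a general Sasakian $3$-manifold are themselves unavailable, and even granting them they would not make the boundary term tend to zero.

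Concretely, the paper closes this gap by three ingredients your plan is missing. First, a weight: the identity is tested not against $\eta^s$ alone but against $\Psi^{-\beta}\eta^s$ with $\Psi=|g|^2e^{-2f}$ and $0<\beta<1/27$, and the differentiated weight is absorbed using $\beta$ small (Lemma 2.2 requires $\beta<1/12$; the stronger constraint $\beta<1/27$ enters later). Second, the sub-Laplacian comparison of Agrachev--Lee under $\ricc_H\ge 0$ gives $\Delta_b r\le 4/r$, hence both the volume bound $\vol B_R\le CR^4$ \emph{and}, since $u$ is superharmonic, the pointwise lower bound $u\ge Cr^{-3}$, i.e.\ $e^{-f}\le CR^3$ on the annulus $A_R$; this lower bound is indispensable to control the negative powers of $e^{f}$ that the weight creates (it produces the factor $R^{54\beta}$ in Lemma 3.3, which is where $\beta<1/27$ comes from), and it has no analogue in your scheme. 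Third, the conclusion is not obtained by sending a boundary term to zero directly: the chain of integral estimates (Lemmas 3.1--3.3) first yields the uniform bound $\int_{B_R}\mathcal{Q}\Psi^{-\beta}\le C$, so the full integral converges; then the annulus-localized Cauchy--Schwarz form of Lemma 2.2, namely $\int_M\mathcal{Q}\Psi^{-\beta}\eta^s\le C\bigl(\int_{A_R}\mathcal{Q}\Psi^{-\beta}\bigr)^{1/2}\bigl(R^{-2}\int_{A_R}|g|^2\Psi^{-\beta}\bigr)^{1/2}$, forces $\int_M\mathcal{Q}\Psi^{-\beta}=0$ because the first factor is the tail of a convergent integral while the second stays bounded (via $\int_{B_R}\Psi^{-\beta}|g|^2\le CR^2$). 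Without the weight, the lower bound on $u$, and this tail argument, your proposal does not close at its central step, so as written it has a genuine gap.
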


In the next results we deal with the higher dimensional case  where we prove similar strong rigidity results under additional assumptions.

Here and in the following we denote by $B_R$ the ball of radius $R$ centered at a fixed point with respect to the Carnot-Caratheodory distance (see Section \ref{prel}).

\begin{theorem}\label{teo2}
 Let $(M^{2n+1}, \theta, J, g)$, $n\geq 2$, be a $(2n+1)$-dimensional complete Sasakian manifold with the Tanaka-Webster curvature satisfying
$$
\ricc_H(X,X)\geq 0 \quad \text{for all horizontal vector field }\,X,
$$
and
$$
\mathrm{Vol}B_R\leq C R^{2n+2} \, ,
$$
for some $C>0$ and every $R>0$ large enough. Let $u\in L^{\frac{2n+2}{n}}(M)$ be a nonnegative solution to the following equation
\begin{equation}\label{eqcritn}
-\Delta_b u = 2n^2 u^{\frac{n+2}{n}}\, \quad \text{in } M^{2n+1}\, ,
\end{equation}
such that $u$ tends to zero at infinity.

Then either $u\equiv 0$ or $(M^{2n+1}, \theta, J, g)$ is CR isometric to $\H^n$ with its standard structures and $u$ is a Jerison-Lee bubble, that is $$u \equiv \mathcal{U}_{\lambda,\mu},$$
 for some $\lambda\in\mathbb{C},\mu\in\mathbb{C}^n$ such that $\mathrm{Im}(\lambda)>\frac{\vert\mu\vert^2}{4}$.
\end{theorem}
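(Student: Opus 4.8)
The plan is to adapt the celebrated Jerison--Lee integral method to the curved Sasakian setting, following the strategy that in the Euclidean case allowed \cite{JL} to identify the extremals of the Folland--Stein--Sobolev inequality and that \cite{CLMR} pushed to the rigidity for all solutions in $\H^1$. The central device is to construct a suitable tensorial quantity --- a pseudohermitian analogue of the traceless ``CR second fundamental form'' or a $Q$-type tensor built from third-order covariant derivatives of an auxiliary function $f$ --- whose vanishing forces both the ambient manifold to be flat (i.e. CR isometric to $\H^n$) and the solution $u$ to be a Jerison--Lee bubble. First I would change the unknown by setting $v=u^{-2/n}$ (or an equivalent conformal factor) so that the conformally rescaled contact form $\tilde\theta=v\,\theta$ turns the CR Yamabe equation \eqref{eqcritn} into a statement about the Tanaka--Webster curvature of $\tilde\theta$; this linearizes the analysis of the nonlinearity and converts the rigidity into the vanishing of a curvature-type expression.

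The key steps, in order, are as follows. First, establish the requisite regularity and the asymptotic decay of $u$ near infinity, using the finite-energy hypothesis $u\in L^{\frac{2n+2}{n}}(M)$ together with the assumption $u\to 0$ at infinity and the volume growth bound $\mathrm{Vol}\,B_R\le CR^{2n+2}$; the volume bound is exactly the threshold that makes the Folland--Stein--Sobolev inequality available and that controls boundary terms at the relevant scale. Second, derive a Bochner--type (CR Bochner, or Bochner--Kohn) integral identity for the rescaled function, integrating by parts over the whole manifold. The nonnegativity of the horizontal Ricci curvature $\ricc_H(X,X)\ge 0$ and the Sasakian (zero pseudohermitian torsion) hypothesis should make every curvature contribution of a definite sign, so that the integrated identity reads as an equality between a manifestly nonnegative integrand involving the square of the candidate tensor and a sum of boundary/divergence terms. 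Third, show that the boundary terms vanish in the limit $R\to\infty$: this is where the decay of $u$, the $L^{\frac{2n+2}{n}}$-integrability, and the $R^{2n+2}$ volume growth must be combined to kill the flux through $\partial B_R$ along a suitable sequence $R_k\to\infty$.

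Once the boundary terms are dispatched, the integral identity forces the nonnegative integrand to vanish identically, hence the candidate tensor is zero everywhere. The final step is the classification: I would interpret the vanishing of this tensor as an overdetermined system of PDEs, showing on the one hand that it pins down the horizontal Hessian of $f$ (equivalently of $v$) to the precise form that the Jerison--Lee bubbles satisfy, and on the other hand that it forces the Tanaka--Webster curvature to be identically zero. Vanishing curvature plus zero torsion together with completeness then identify $(M^{2n+1},\theta,J,g)$ as CR isometric to $\H^n$ with its standard structures --- here I would invoke the appropriate CR flatness/rigidity characterization of the Heisenberg group --- and solving the reduced ODE/PDE for $v$ on $\H^n$ recovers $u\equiv\mathcal U_{\lambda,\mu}$. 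The main obstacle I anticipate is the second step: in dimension $n\ge 2$ the Jerison--Lee vector field and the integration-by-parts structure are considerably more delicate than in the three-dimensional case of Theorem \ref{teo1}, and the extra geometric terms produced by the curvature of $M$ (absent in the flat $\H^n$ computation of \cite{JL}) must be shown to have the correct sign or to be absorbable; controlling these curvature remainders while simultaneously keeping the boundary integrals convergent under only the stated volume and decay hypotheses is the crux of the argument.
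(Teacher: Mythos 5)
Your outline follows the same broad strategy as the paper: an auxiliary function $f$ with $e^f=u^{1/n}$, a divergence identity whose left-hand side is a nonnegative quantity $\mathcal{Q}$ under $\ricc_H\geq 0$ and zero torsion, cut-off estimates exploiting the volume growth and the finite energy to conclude $\mathcal{Q}\equiv 0$, and a final classification. However, there are two genuine gaps. First, the engine of the proof is not a generic CR Bochner--Kohn identity: no such formula produces the required sign structure. What is needed is the specific Jerison--Lee identity generalized to Sasakian manifolds (Proposition \ref{prop_JL}, taken from \cite{W2} and \cite{OuXuMa}), in which the precise combination of the tensors $D_{\alpha\beta}$, $E_{\alpha\bar\beta}$, $D_\alpha$, $E_\alpha$, $G_\alpha$ makes $\mathcal{Q}$ an exact divergence of a vector field built from $f$, $g$ and $f_0$. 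You correctly flag this as ``the crux'' but leave it unresolved; without that identity the rest of the scheme does not start. Relatedly, ``killing boundary terms along a sequence $R_k\to\infty$'' undersells what is actually required: the paper's Lemma \ref{lemma0} has a self-improving Cauchy--Schwarz structure localized on $\mathrm{supp}\,|\partial\eta|$ (finiteness of $\int_M\mathcal{Q}$ forces $\int_M\mathcal{Q}=0$), and for $n=2$ one must insert the weight $\Psi^{-\beta}$ with $\beta>0$ into the test function, since Lemma \ref{l-est2} needs $2(1+\beta)>n$; the unweighted route via Lemma \ref{l-estn} only works for $n\geq 3$.

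Second, and more substantively, your endgame is incorrect as stated: for $n\geq 2$ the vanishing of $\mathcal{Q}$ does \emph{not} directly force the Tanaka--Webster curvature of $\theta$ to vanish. It only yields $R_{\alpha\bar\beta}f_\beta=0$ (using $R_{\alpha\bar\beta}\geq 0$) together with the closed first-order system $D_{\alpha\beta}=E_{\alpha\bar\beta}=0$, $D_\alpha=E_\alpha=G_\alpha=0$. (In the three-dimensional case of Theorem \ref{teo1} one does get $R|\partial f|^2\equiv 0$ and hence flatness on a dense set, but that shortcut is unavailable for $n\geq2$.) The paper's rigidity step instead passes to the conformally changed contact form $\widetilde\theta=\phi^{-1}\theta$ with $\phi=cu^{-2/n}$, shows $\widetilde\theta$ is torsion-free and pseudo-Einstein with $\widetilde R_{\alpha\bar\beta}=R_{\alpha\bar\beta}+\frac{n+1}{2}\widetilde h_{\alpha\bar\beta}$, proves $w=\log\psi$ is CR pluriharmonic with CR-holomorphic completion $w+iv$, and then constructs $F=e^{w/2}\cos(v/2)+C$ satisfying the Obata-type equation $D^2F=-\frac{F}{4}\,g_{\widetilde\theta}$; Obata's splitting theorem then shows $g_{\widetilde\theta}$ is spherical, whence $\theta$ is pseudohermitian flat and $(M,\theta)$ is CR equivalent to $\H^n$, after which \cite{JL} identifies $u$ with a bubble. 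This intermediate Obata argument --- where the decay $u\to 0$ at infinity is used again to guarantee $F$ has a critical point --- is an essential ingredient your sketch replaces with an unjustified implication, and it is also where the hypothesis $u\to0$ enters beyond mere boundary-term control. Finally, note the paper actually proves the stronger Theorem \ref{teo3} with only annulus growth $\int_{A_R}u^{\frac{2(n+1)}{n}}\leq CR^\sigma$, and deduces Theorem \ref{teo2} since $u\in L^{\frac{2n+2}{n}}(M)$ makes these annulus integrals bounded; your direct use of global integrability is fine for Theorem \ref{teo2} itself but forgoes this refinement.
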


We actually prove the following result of which Theorem \ref{teo2} is an immediate consequence.

\begin{theorem}\label{teo3}
 Let $(M^{2n+1}, \theta, J, g)$, $n\geq 2$, be a $(2n+1)$-dimensional complete Sasakian manifold with  the Tanaka-Webster curvature satisfying
$$
\ricc_H(X,X)\geq 0 \quad \text{for all horizontal vector field }\,X,
$$
together with 
$$
\mathrm{Vol}B_R\leq C R^{2n+2} \, ,
$$
for some $C>0$ and every $R>0$ large enough. Let $u$ be a nonnegative solution to \eqref{eqcritn} in $M$ which tends to zero at infinity and such that
$$
\int_{A_R}u^{\frac{2(n+1)}{n}} \leq C R^{\sigma}\,, 
$$
for every $R>0$ large enough, where $A_R=B_R\setminus B_{R/2}$, and for some $\sigma<2$ if $n=2$, $\sigma=2$ if $n\geq 3$. Then either $u\equiv 0$ or $(M^{2n+1}, \theta, J, g)$ is CR isometric to $\H^n$ with its standard structures and $u$ is a Jerison-Lee bubble, that is $$u \equiv \mathcal{U}_{\lambda,\mu},$$
 for some $\lambda\in\mathbb{C},\mu\in\mathbb{C}^n$ such that $\mathrm{Im}(\lambda)>\frac{\vert\mu\vert^2}{4}$.
\end{theorem}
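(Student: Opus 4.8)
The plan is to adapt the original Jerison--Lee argument, in the curved form developed in \cite{CLMR} for $\H^n$, to a setting in which the two growth hypotheses replace the finite--energy assumption. The three ingredients are a conformal change of contact form, a weighted Bochner--type divergence identity for the Jerison--Lee tensor, and a cutoff estimate in which the volume and annular--energy bounds make the boundary terms disappear.

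First I would perform the conformal change $\hat\theta = u^{2/n}\theta$. A direct computation with the CR conformal transformation law shows that, since $u$ solves \eqref{eqcritn}, the Tanaka--Webster scalar curvature of $\hat\theta$ is
\begin{equation}\label{pf:scal}
\hat R = 4n(n+1) + R_\theta\, u^{-2/n} \geq 4n(n+1) > 0,
\end{equation}
where $R_\theta\geq 0$ is the background scalar curvature (nonnegative since $\ricc_H\geq 0$), with equality where $u>0$ precisely when $R_\theta$ vanishes. I would then introduce the Jerison--Lee tensor $E$ of $\hat\theta$, the trace--free part of the horizontal Hessian of an appropriate power of $u$ corrected by the pseudohermitian torsion, whose vanishing is known to single out the Jerison--Lee bubbles on the flat model.

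The analytic heart of the proof is a pointwise divergence identity of the shape
\begin{equation}\label{pf:div}
\diver V = u^{p}\bigl(|E|^2 + \ricc_H(Z,Z)\bigr) + \diver W,
\end{equation}
for a suitable vector field $V$, a horizontal vector field $Z$ built from the horizontal gradient of $u$, an exponent $p$, and an auxiliary term $W$. Both summands in the parentheses are nonnegative: the first by construction, the second by the hypothesis $\ricc_H\geq 0$. Multiplying \eqref{pf:div} by a cutoff $\phi_R$ equal to $1$ on $B_{R/2}$ and supported in $B_R$, integrating over $M$ and integrating by parts moves all derivatives onto $\phi_R$, yielding
\begin{equation}\label{pf:cacc}
\int_{B_{R/2}} u^{p}\bigl(|E|^2 + \ricc_H(Z,Z)\bigr) \leq C\int_{A_R}\bigl(|V|+|W|\bigr)\,|\nabla\phi_R|.
\end{equation}
Estimating the right--hand side by H\"older's inequality in terms of the annular energy $\int_{A_R}u^{2(n+1)/n}$, the gradient bound $|\nabla\phi_R|\lesssim R^{-1}$, and the volume bound $\mathrm{Vol}\,B_R\leq CR^{2n+2}$, one reduces it to a power of the annular energy times a power of $R$. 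This is exactly where the hypothesis on $\sigma$ enters: the resulting exponent is strictly negative, so the boundary integral tends to $0$ as $R\to\infty$, precisely when $\sigma<2$ for $n=2$ and $\sigma=2$ for $n\geq 3$. The dimension $n=2$ is borderline because the exponents in this H\"older estimate degenerate exactly at $n=2$, $\sigma=2$, which forces the strict inequality there while tolerating equality for $n\ge3$.

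Letting $R\to\infty$ in \eqref{pf:cacc} then forces $E\equiv 0$ and $\ricc_H(Z,Z)\equiv 0$ on $M$. The vanishing of $E$, interpreted as a CR--Obata--type rigidity, together with $u\to 0$ at infinity and completeness, identifies $u$ as a Jerison--Lee bubble and the underlying structure as the model; tracking the equality cases in \eqref{pf:scal}--\eqref{pf:div} forces $R_\theta\equiv 0$ and the vanishing of the full pseudohermitian curvature, so that $(M^{2n+1},\theta,J,g)$ is CR isometric to $\H^n$. I expect the main obstacle to be the derivation and precise weighting of \eqref{pf:div}: one must choose $V$, $W$, $p$ and the tensor $E$ so that every term without a definite sign organizes into the exact divergence $\diver W$, and then verify that the surviving boundary integral is governed by exactly the annular energy and volume appearing in the hypotheses --- the bookkeeping being tightest, and the argument genuinely borderline, in the dimension $n=2$.
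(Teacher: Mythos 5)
Your top-level strategy --- a Jerison--Lee-type divergence identity with nonnegative right-hand side, a cutoff integration by parts, and an Obata-type rigidity step at the end --- is indeed the skeleton of the paper's proof, but your sketch has concrete gaps at exactly the points where the actual work lies. First, the flux term cannot be handled ``by H\"older's inequality in terms of the annular energy'': the vector field in the Jerison--Lee identity (Proposition \ref{prop_JL}) involves $f_0$, $|\partial f|^2$ and second derivatives of $f=\tfrac1n\log u$ through $D_\alpha$, $E_\alpha$, $G_\alpha$, and quantities such as $\int_{A_R}e^{2(n-1)f}|g|^2$ or $\int_{A_R}e^{2(n-1)f}|\partial f|^4$ are simply not controlled by $\int_{A_R}u^{2(n+1)/n}$ and $\vol B_R$ via any direct H\"older step. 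The whole of Section \ref{int_est} exists to reduce these to controllable quantities by repeated integration by parts against the equation \eqref{eq_g}, with $\varepsilon$-absorption at every stage (Lemmas \ref{lemma_est}, \ref{l-est2}, \ref{l-est22}, \ref{l-estn}); your proposal defers this to ``bookkeeping,'' but it is the analytic heart of the theorem. Second, your claim that the boundary integral ``tends to $0$ as $R\to\infty$ \dots when $\sigma=2$ for $n\geq3$'' is wrong: with $\sigma=2$ the estimates yield only $\int_M\mathcal{Q}\,\eta^s\leq C$ uniformly in $R$, not decay. The paper closes the case $n\geq 3$ by a two-step self-improvement: first finiteness of $\int_M\mathcal{Q}$ together with $\int_{B_R}e^{2(n-1)f}|g|^2\leq CR^2$, then a second application of the Cauchy--Schwarz form of Lemma \ref{lemma0}, whose first factor is the tail $\bigl(\int_{\mathrm{supp}|\partial\eta|}\mathcal{Q}\bigr)^{1/2}\to0$ while the second factor stays bounded. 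A scheme in which one only shows the right-hand side of your Caccioppoli inequality goes to zero cannot reach $\sigma=2$.

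Third, the borderline at $n=2$ is not a matter of H\"older exponents degenerating in your cutoff estimate: the unweighted inequality does not close at all in that dimension, and the paper must insert the singular weight $\Psi^{-\beta}$ with $\Psi=|g|^2e^{-2f}$ as in \eqref{defPsi} and $\beta>0$ small (for instance, the proof of Lemma \ref{l-est2} needs $2(1+\beta)>n$, which at $n=2$ forces $\beta>0$, and Lemma \ref{l-est22} is stated only for $\beta>0$). Your proposal has no analogue of this weight, so the $n=2$ case would fail as written. Finally, a structural caution on your order of operations: performing the conformal change $\hat\theta=u^{2/n}\theta$ \emph{before} invoking the divergence identity is hazardous, because the curved Jerison--Lee identity is established only for zero torsion, and $\hat\theta$ is in general not torsion-free. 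The paper runs the identity on the original Sasakian structure and changes the contact form only in the rigidity step, after $\mathcal{Q}\equiv0$ gives $D_{\alpha\beta}=0$, which is precisely what makes the new torsion $\widetilde{A}_{\alpha\beta}=A_{\alpha\beta}+i\left(w_{\alpha,\beta}-w_{\alpha}w_{\beta}\right)$ vanish; your formula $\hat R=4n(n+1)+R_\theta u^{-2/n}$ is correct but plays no role in the paper's argument, where the relevant conclusion $R_{\alpha\bar\beta}f_\beta=0$ comes instead from the vanishing of the curvature term inside $\mathcal{Q}$.
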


\begin{remark}
We point out that in the Sasakian context a Bishop-Gromov volume comparison is still an open and challenging question for $n\geq 2$; the case $n=1$ was treated in \cite{agralee} (see Theorem \ref{t-comp} $(1)$ for further details). It is well-known (see e.g. \cite{leeli}) that given $(M^{2n+1}, \theta, J, g)$, $n\geq 2$, a $(2n+1)$-dimensional complete Sasakian manifold such that
\begin{equation}\label{foto}
\ricc_H(X,X)\geq \mathrm{R}(X,JX, X, JX)\geq 0 \quad \text{for all horizontal vector field }\,X,
\end{equation}
then 
$$
\mathrm{Vol}B_R\leq C R^{2n+2} \, ,
$$
for some $C>0$ and every $R>0$ large enough (see Theorem \ref{t-comp} $(2)$). In particular, we have that Theorems \ref{teo2} and \ref{teo3} hold replacing the assumptions on $\mathrm{Ric}_H$ and on $\mathrm{Vol}B_R$ with \eqref{foto}.
\end{remark} 

Important corollaries of our theorems are the following. We define the Folland-Stein-Sobolev quotient as
$$
\mathcal{S}_\theta(M)=\inf_{0\not\equiv u\in\mathcal{D}^{1,2}(M)}\frac{\int_M |\partial u|^2}{\left(\int_M |u|^{\frac{2n+2}{n}}\right)^{\frac{n}{n+1}}}\, ,
$$
where
$$
\mathcal{D}^{1,2}(M)=\left\{ u\in L^{\frac{2n+2}{n}}(M) \, :\ \, |\partial u|\in L^2(M) \right\}\,.
$$
We refer to Section \ref{prel} for the definition of $\partial u$. It is well known the validity of the Folland-Stein-Sobolev inequality on a complete Sasakian manifold $M$, i.e. $\mathcal{S}_\theta(M)>0$, is equivalent to an optimal volume growth of balls from below (see \cite[Theorem 2.6]{BKim}). Under this assumption, a positive finite energy solution to \eqref{eqcritn} vanishes at infinity (see Lemma \ref{Moser} below).

\begin{corollary}\label{Sob_gen}
Let $(M^{2n+1}, \theta, J, g)$, $n\geq 2$, be a $(2n+1)$-dimensional complete Sasakian manifold with the Tanaka-Webster curvature satisfying
$$
\ricc_H(X,X)\geq 0 \quad \text{for all horizontal vector field }\,X,
$$
and
$$
cR^{2n+2}\leq \mathrm{Vol}B_R\leq C R^{2n+2} \, ,
$$
for some $c,C>0$ and every $R>0$ large enough. Let $u\in L^{\frac{2n+2}{n}}(M)$ be a nonnegative solution to  \eqref{eqcritn}. Then either $u\equiv 0$ or $(M^{2n+1}, \theta, J, g)$ is CR isometric to $\H^n$ with its standard structures and $u$ is a Jerison-Lee bubble, that is $$u \equiv \mathcal{U}_{\lambda,\mu},$$
 for some $\lambda\in\mathbb{C},\mu\in\mathbb{C}^n$ such that $\mathrm{Im}(\lambda)>\frac{\vert\mu\vert^2}{4}$.
\end{corollary}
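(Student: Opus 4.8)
The plan is to deduce this corollary directly from Theorem \ref{teo2}. Comparing hypotheses, Theorem \ref{teo2} requires nonnegative horizontal Tanaka-Webster Ricci curvature, the upper volume bound $\mathrm{Vol}\,B_R\le CR^{2n+2}$, a finite energy solution $u\in L^{\frac{2n+2}{n}}(M)$, \emph{and} that $u$ tend to zero at infinity. The corollary keeps the first three assumptions, strengthens the volume control to the two-sided bound $cR^{2n+2}\le \mathrm{Vol}\,B_R\le CR^{2n+2}$, but drops the vanishing-at-infinity requirement. The whole strategy is therefore to use the \emph{lower} volume bound to recover, for free, the decay of $u$ at infinity, thereby reducing the statement to an application of Theorem \ref{teo2}.

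First I would invoke the lower bound $cR^{2n+2}\le \mathrm{Vol}\,B_R$. By \cite[Theorem 2.6]{BKim}, an optimal volume growth of balls from below is equivalent to the validity of the Folland-Stein-Sobolev inequality on $M$, i.e. $\mathcal{S}_\theta(M)>0$. Consequently $M$ supports the subelliptic embedding $\mathcal{D}^{1,2}(M)\hookrightarrow L^{\frac{2n+2}{n}}(M)$ with a uniform constant. Note also that, testing \eqref{eqcritn} against $u$ with standard cutoffs, the finite energy hypothesis $u\in L^{\frac{2n+2}{n}}(M)$ together with the equation yields $\int_M |\partial u|^2 = 2n^2\int_M u^{\frac{2n+2}{n}}<\infty$, so $u\in\mathcal{D}^{1,2}(M)$ and the Sobolev framework is genuinely applicable to $u$.

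Next, with the Sobolev inequality in hand, I would apply Lemma \ref{Moser}: a nonnegative finite energy solution of \eqref{eqcritn} vanishes at infinity. The underlying mechanism is a Moser iteration scheme in which the Sobolev inequality furnishes the integrability gain at each step, while the finiteness of $\int_M u^{\frac{2n+2}{n}}$ supplies the smallness of the critical mass on far-away balls needed to initiate and propagate the iteration; the outcome is the uniform decay $\|u\|_{L^\infty(M\setminus B_R)}\to 0$ as $R\to\infty$.

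At this point all the hypotheses of Theorem \ref{teo2} are satisfied—nonnegative horizontal Tanaka-Webster Ricci curvature, the upper volume bound $\mathrm{Vol}\,B_R\le CR^{2n+2}$, $u\in L^{\frac{2n+2}{n}}(M)$, and vanishing at infinity—so the dichotomy of that theorem applies and gives exactly the asserted rigidity: either $u\equiv 0$ or $(M,\theta,J,g)$ is CR isometric to $\H^n$ and $u$ is a Jerison-Lee bubble. The only genuinely substantive point is the implication \emph{finite energy} $\Rightarrow$ \emph{decay at infinity}, which is precisely the content of Lemma \ref{Moser}; and this lemma is made available here solely through the lower volume bound via \cite[Theorem 2.6]{BKim}. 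Everything else is a direct invocation of Theorem \ref{teo2}, so I expect no further obstacle.
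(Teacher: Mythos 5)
Your proposal is correct and follows exactly the paper's own route: the lower volume bound yields the Folland--Stein--Sobolev inequality via \cite[Theorem 2.6]{BKim}, Lemma \ref{Moser} then gives the decay of $u$ at infinity, and Theorem \ref{teo2} concludes. Your added remark that $u\in L^{\frac{2n+2}{n}}(M)$ together with the equation forces $|\partial u|\in L^2(M)$ is a harmless elaboration consistent with the paper's energy-equivalence lemma.
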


This result recovers and improves the seminal one by Jerison-Lee \cite{JL} for finite energy solutions in the Heisenberg group to the curved setting. Moreover, for Folland-Stein-Sobolev minimizers we have the following.

\begin{corollary}\label{Sob}
Let $(M^{2n+1}, \theta, J, g)$ be a $(2n+1)$-dimensional complete, nonflat Sasakian manifold. If
\begin{itemize}
\item[i)] $n=1$ and the Tanaka-Webster scalar curvature is nonnegative, or
\item[ii)] $n\geq 2$, the Tanaka-Webster curvature satisfies
$$
\ricc_H(X,X)\geq 0 \quad \text{for all horizontal vector field }\,X,
$$
and
$$
\mathrm{Vol}B_R\leq C R^{2n+2} \, ,
$$
for some $C>0$ and every $R>0$ large enough,
\end{itemize}
then there exists no $0<u\in\mathcal{D}^{1,2}(M)$ which minimizes $\mathcal{S}_\theta(M)$.
\end{corollary}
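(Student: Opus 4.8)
\section*{Proof proposal for Corollary \ref{Sob}}

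The plan is to argue by contradiction. Suppose a positive minimizer $0<u\in\mathcal{D}^{1,2}(M)$ of the Folland--Stein--Sobolev quotient $\mathcal{S}_\theta(M)$ exists; I will show that under either hypothesis (i) or (ii) the assumptions collapse to exactly those of Theorem \ref{teo1} (when $n=1$) or of Corollary \ref{Sob_gen} (when $n\geq 2$), forcing $M$ to be CR isometric to the Heisenberg group $\H^n$. Since $\H^n$ is Tanaka--Webster flat, this contradicts the standing non-flatness assumption, so no positive minimizer can exist.

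First I would record that the existence of a nontrivial positive extremal forces $\mathcal{S}_\theta(M)>0$: if the quotient vanished at $u$, then $|\partial u|\equiv 0$ and $u$ would be a positive constant, which does not lie in $\mathcal{D}^{1,2}(M)$ on the (infinite-volume, noncompact) manifold $M$. With $\mathcal{S}_\theta(M)>0$ in hand, the equivalence recalled just before the statement, namely \cite[Theorem 2.6]{BKim}, yields the optimal lower volume bound $cR^{2n+2}\leq \mathrm{Vol}B_R$ for all large $R$. In case (ii) this combines with the assumed upper bound to give the two-sided estimate $cR^{2n+2}\leq \mathrm{Vol}B_R\leq CR^{2n+2}$ that is required by Corollary \ref{Sob_gen}.

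Next I would pass from the variational characterization to the CR Yamabe equation. The Euler--Lagrange equation of $\mathcal{S}_\theta$, together with the identity $\frac{2n+2}{n}-1=\frac{n+2}{n}$, shows that $u$ solves $-\Delta_b u=\Lambda\,u^{(n+2)/n}$ weakly for some constant $\Lambda>0$ proportional to $\mathcal{S}_\theta(M)$; rescaling $u\mapsto\kappa u$ for a suitable $\kappa>0$ normalizes $\Lambda=2n^2$, so that $u$ becomes a positive weak solution of \eqref{eqcritn} (of \eqref{eqcrit1} when $n=1$). Subelliptic regularity theory then upgrades $u$ to a genuine positive $C^2$ solution, and since $u\in\mathcal{D}^{1,2}(M)$ is a finite energy solution, Lemma \ref{Moser} guarantees that $u$ tends to zero at infinity.

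Finally I would invoke the rigidity theorems. When $n=1$, hypothesis (i) supplies exactly the assumptions of Theorem \ref{teo1} (no volume growth or decay is even needed there), which gives $u\equiv 0$ or $M$ CR isometric to $\H^1$. When $n\geq 2$, hypothesis (ii) together with the lower volume bound and the decay at infinity established above matches the hypotheses of Corollary \ref{Sob_gen} (equivalently Theorem \ref{teo2}), which gives $u\equiv 0$ or $M$ CR isometric to $\H^n$. In either case $u>0$ rules out the trivial alternative, so $M$ is CR isometric to the Heisenberg group, which is Tanaka--Webster flat, contradicting non-flatness. The only non-routine points, beyond bookkeeping, are the regularity and normalization passage from an extremal to a $C^2$ solution of the exact equation \eqref{eqcritn}, and, conceptually, the use of $\mathcal{S}_\theta(M)>0$ to manufacture precisely the lower volume bound demanded by the quoted theorems; the remainder is a direct assembly of results already established.
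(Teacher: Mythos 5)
Your proposal is correct and follows essentially the same route as the paper's own (much terser) proof: argue by contradiction, use the minimizer to get $\mathcal{S}_\theta(M)>0$ and hence, via \cite[Theorem 2.6]{BKim}, the lower volume bound needed to invoke Corollary \ref{Sob_gen} (resp.\ Theorem \ref{teo1} when $n=1$), and conclude that $M$ would be CR isometric to $\H^n$, contradicting non-flatness. You merely make explicit several steps the paper leaves implicit — the Euler--Lagrange equation with its normalization to \eqref{eqcritn}, subelliptic regularity, and the separate treatment of the $n=1$ case via Theorem \ref{teo1} — all of which are consistent with the paper's argument.
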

In particular, if $n\geq 2$,
$$\ricc_H(X,X)=0\quad\text{for all horizontal vector field }\,X\quad\text{and}\quad \mathrm{Vol}B_R\leq C R^{2n+2},$$ then there exists no $0<u\in\mathcal{D}^{1,2}(M)$ which minimizes the CR-Yamabe quotient:
$$
Y_\theta(M):=\inf_{0\not\equiv u\in C^{\infty}_0(M)}\frac{\int_M \frac{n+2}{n}|\partial u|^2+R u^2}{\left(\int_M |u|^{\frac{2n+2}{n}}\right)^{\frac{n}{n+1}}}\, .
$$

\

The proof of our results relies on a generalization of the remarkable differential identity proved in \cite{JL} (see also \cite{OuMa}) to the Sasakian setting (see e.g. \cite{W1,W2,OuXuMa})
, which involves a vector field depending on the solution $u$ and its derivatives and the Tanaka-Webster Ricci curvature of the manifold. From our curvature assumption we have that the divergence of this vector field is nonnegative. Inspired by \cite{CLMR} (see also \cite{CaMo,CMR}), through a test functions argument we are able to obtain integral estimates which imply that such divergence must vanish identically, thus giving the desired rigidity result for both the solution and the manifold $M^{2n+1}$. When $n=1$, our argument goes through without any further assumption by using the sub-Laplacian comparison which yields to a lower-bound on the solution and an upper bound on the volume of balls (with respect to the Carnot-Caratheodory distance) \`a la Bishop-Gromov. While, when $n\geq 2$, to perform our argument we need to assume that the solution has finite energy (or actually a weaker energy assumption) and goes to zero at infinity. This is reminiscent of the papers \cite{CLMR, flyvet}.

\

\bigskip

\noindent{\bf Organization of the paper.} The paper is organized as follows: in Section \ref{prel} we collect some preliminaries regarding CR and Sasakian geometry, then we present the main differential identity together with an integral inequality which will be the starting point of our approach. In Section \ref{int_est} we prove several integral estimates which are used in Section \ref{proofs} to prove the main results stated above.

\

\section{Preliminaries}\label{prel}

\subsection{Notations.} We first give a brief introduction to psedohermitian geometry and fix our notations, following the standard reference \cite{Lee}. A \emph{CR manifold} is a smooth manifold $M$ endowed with a distinguished subbundle $T^{1,0}$ of the complexified tangent bundle $\mathbb{C}TM=TM\otimes\mathbb{C}$ such that $[T^{1,0},T^{1,0}]\subseteq T^{1,0}$ (i.e. $T^{1,0}$ is formally integrable) and $T^{1,0}\cap T^{0,1}=\{0\}$, where $T^{0,1}=\overline{T^{1,0}}$ (i.e. $T^{1,0}$ is almost Lagrangian). The bundle $T^{1,0}$ is called a \emph{CR structure} on the manifold $M$. A CR manifold is said to be of \emph{hypersurface type} if $\mathrm{dim}_\mathbb{R}M=2n+1$ and $\mathrm{dim}_{\mathbb{C}}T^{1,0}=n$. Then $H(M):=\{X+\overline{X}:X\in T^{1,0}\} $ is a  real subbundle of $TM$ of rank $2n$ and  called the horizontal distribution. On $H(M)$ there is an almost complex structure $J$ defined by $J\left( X+\overline{X}\right)=i \left( X-\overline{X}\right)$.

We will consider only CR manifolds $M$ which are of hypersurface type and oriented. Then it is possible to associate to its CR structure $T^{1,0}$ a one form $\theta$ globally defined on $M$ such that $\mathrm{ker}(\theta)=H(M) $, which is unique modulo a multiple of nonzero function on $M$: a choice of a nonzero multiple of such $\theta$ is called a \emph{pseudohermitian structure} on $M$. The \emph{Levi form} of $\theta$ is defined by  
$$
L_{\theta}(V,W):=- i d\theta (V,\overline{W}) \,  , \quad \text{for all } V,W\in T^{1,0}\, . 
$$
We say that the CR structure is \emph{strictly pseudoconvex} if the Levi form $L_\theta$ is positive definite for some choice of $\theta$. We simply call the triple $(M^{2n+1}, \theta,J)$ a \emph{pseudohermitian manifold.} In this case, $\theta$ is a contact form and $G_{\theta}:=d\theta(\cdot, J\cdot)$ is a $J$-invariant positive-definite bilinear form on $H(M)$, i.e. it defines a $J$-invariant Riemannian metric on $H(M)$. 

\noindent The \emph{Reeb vector field} $T$ is defined by 
$$
\theta(T)=1 \quad \text{and} \quad  d\theta(T,X)=0 \quad \text{ for all } X\in TM\, ,
$$
then $TM=H(M)\oplus  \mathrm{Span}(T)$. We
have a natural Riemannian metric $g_{\theta}$ on $M$ such that this decomposition is orthogonal, 
$g_{\theta}\left( T,T\right) =1$ and its restriction on $H(M)$ equals $G_{\theta}$. The \emph{volume form} of $g_{\theta}$ is 
$$
dV_\theta=\theta\wedge(d\theta)^n/n!\, , 
$$ 
and is often omitted when we integrate. 

\noindent As usual, we extend $g_{\theta}$ as a complex bilinear form on the complexified tangent bundle
$\mathbb{C}TM=T^{1,0}\oplus T^{0,1}\oplus\mathrm{Span}(T)$. Throughout the paper, we work with the \emph{Tanaka-Webster connection} $\nabla$ which  is compatible with the metric  $g_{\theta }$, but it has a nontrivial torsion. The torsion $\tau $ satisfies
$$
\tau \left( Z,W\right) =0\, , \quad  \tau \left( Z,\overline{W}\right) =\omega \left( Z,\overline{W}\right) T\, ,
\quad \tau \left( T,J\cdot \right) =-J\tau \left( T,\cdot \right)
$$
for any $Z,W\in T^{1,0}\left( M\right) $.
We define $A:T\left( M\right) \rightarrow T\left( M\right) $ by 
$$
AX=\tau
\left( T,X\right) \, . 
$$ 
It is customary to simply call $A$ the \emph{torsion of the CR manifold}. It is easy to see that $A$ is symmetric. Moreover 
$$
AT=0\, , \quad AH\left( M\right) \subset
H\left( M\right)\, , \quad AJX=-JAX\, .
$$ 
It is well known that
$$
A=0\quad  \Longleftrightarrow \quad g_{\theta}\,  \text{ is \emph{Sasakian}, i.e. the Reeb vector field $T$ is \emph{Killing.}}
$$
Therefore Sasakian manifolds are precisely the strictly pseudoconvex CR manifolds with a fixed contact form $\theta$ such that the Levi form $L_{\theta}$ is positive definite and the torsion of the Tanaka-Webster connection vanishes.

On a pseudohermitian manifold $(M^{2n+1},\theta,J,g_{\theta})$, a Lipschitz \emph{curve} $\gamma :\left[ 0,1\right]
\rightarrow M$ is called \emph{admissible} if 
$$
\gamma ^{\prime }\left( t\right)
\in H_{\gamma \left( t\right) }\left( M\right) \, ,\quad \text{ for all $t$ where $\gamma'$ is defined}.
$$ 
The \emph{Carnot-Caratheodory distance} is defined as%
\begin{equation*}
d\left( p,q\right) =\inf \left\{ L\left( \gamma \right) :\gamma :\left[ 0,1%
\right] \rightarrow M\text{ is an admissible curve with }\gamma(0)=p \text{ and } \gamma(1)=q\right\} ,
\end{equation*}
where $p,q\in M$ and 
$$
L(\gamma)=\int_0^1 \sqrt{g_\theta(\gamma'(t),\gamma'(t))}\, dt\, , 
$$
is the \emph{length} of the curve $\gamma$.

\noindent The Carnot-Caratheodory distance is indeed a distance function that induces the same topology on $(M^{2n+1},\theta,J,g_{\theta})$. One can also define geodesics, the exponential map, cut points etc. in this
sub-riemannian setting. We refer to \cite{St} and \cite{book} and references therein for details.

To do calculations, it is convenient to work with a local frame $\{ Z_\alpha\}_{\alpha=1}^n$ for  $T^{1,0}$ such that
$$
d\theta (Z_\alpha, \bar{Z_\beta})=2\delta_{\alpha\bar{\beta}}\, .
$$ 
Let $\{\theta^\alpha\}_{\alpha=1}^n$ be the dual basis. We define, for all $\beta=1,\cdots,n$
$$
Z_{\bar{\beta}}:=\bar{Z}_{\beta}
\quad 	\text{ and } \quad \theta_{\bar{\beta}}:=\bar{\theta}_{\beta} \, .
$$
Therefore
$$
d\theta= 2i \delta_{\alpha\bar{\beta}}\theta^\alpha\wedge\theta^{\bar{\beta}}\, . 
$$

\

Throughout the paper we adopt Einstein summation convention over repeated indices. 

\

Moreover, given a Sasakian manifold $(M^{2n+1},\theta,J,g_{\theta})$ and a smooth function $f:M\rightarrow\mathbb{R}$ we denote its derivatives by
$$
f_\alpha=Z_\alpha f \, ,\quad f_{\bar{\alpha}}=Z_{\bar{\alpha}} f\, , \quad f_0=Tf \, , 
\quad f_{\alpha\bar{\beta}}=Z_{\bar{\beta}}\left( Z_\alpha f\right)-\nabla_{Z_{\bar{\beta}}}Z_\alpha f \, ,  \quad f_{0\alpha}=Z_{\alpha}Tf\, ,
$$
and so on. In doing calculations, we freely use the following commutation rules that can be found in  \cite{Lee}
$$
f_{\alpha\beta}-f_{\beta\alpha}=0 \quad \text{ and } \quad f_{\alpha\bar\beta}-f_{\bar\beta\alpha}=2i\delta_{\alpha\bar{\beta}}f_0 \, ,
$$
in addition, since the torsion of the Tanaka-Webster connection vanishes, we have
$$
f_{0\alpha}-f_{\alpha 0}=0\, \quad \text{ and } \quad f_{\alpha\beta 0}-f_{\alpha 0\beta}=f_{\alpha 0\bar\beta }-f_{\alpha \bar\beta 0}=0\, . 
$$
Moreover, 
$$
f_{\alpha\beta\bar{\gamma}}-f_{\alpha\bar{\gamma}\beta}=2i \delta_{\beta\bar{\gamma}}f_{\alpha 0} + R_{\bar\mu\alpha\beta\bar{\gamma}}f_{\mu}\, ,
$$
where $R_{\alpha\bar\mu\beta\bar{\gamma}}$ denotes the Tanaka-Webster Riemann curvature, $R_{\alpha\bar\beta}=R_{\mu\alpha\bar\mu\bar{\beta}}$ denotes the Tanaka-Webster Ricci curvature and $R=R_{\alpha\bar{\alpha}}$ denotes the Tanaka-Webster scalar curvature.

Finally, we define
$$
\vert\partial f\vert^2:= f_\alpha f_{\bar{\alpha}} 
$$
and 
$$
\Delta_bf:=f_{\alpha\bar{\alpha}}+ f_{\bar{\alpha}\alpha} \, ,
$$
which is the so-called \emph{sub-Laplacian.} 

We recall the Heisenberg group $\mathbb{H}^n$ is the flat model in the Sasakian setting.
We have
$$
\mathbb{H}^n:=\mathbb{C}^n\times\mathbb{R}
$$
with coordinates $\xi=(z,t)=(z_1,\dots,z_n,t)\in\mathbb{H}^n$ and with the group law $\circ$: given $\xi=(z,t)$ and $\zeta=(w,s)$
$$
(z,t)\circ(w,s)=\left(z+w,t+s+2\mathrm{Im} (z^\alpha\bar{w}^\alpha)\right) \, .
$$
We define the following left-invariant (with respect to $\circ$) vector fields in $\mathbb{H}^n$
$$
Z_\alpha=\frac{\partial}{\partial z^\alpha}+i\bar{z}_\alpha\frac{\partial}{\partial t} \quad \text{and} \quad Z_{\bar{\alpha}}=\frac{\partial}{\partial \bar{z}^\alpha}-i z_\alpha\frac{\partial}{\partial t} \quad \text{for } \alpha=1,\dots,n.
$$
The standard CR structure is given by the bundle $T^{1,0}$ spanned by the vector fields $Z_\alpha$, for $\alpha=1,\ldots,n$, and  the standard contact form is 
$$
\Theta=dt+\sum_{\alpha=1}^niz^\alpha d\bar{z}^\alpha-i\bar{z}^\alpha dz^{\alpha}.
$$
The Reeb vector field is $T=\frac{\partial}{\partial t}$ and the Tanaka-Webster Riemann curvature and the torsion are zero.

\

\subsection{A differential identity and an integral inequality} Given $u>0$ a solution of \eqref{eqcritn_general} we consider the auxiliary function $f$ defined as follows
\begin{equation}\label{def_f}
e^f=u^{\frac{1}{n}} \, ,
\end{equation}
then $f$ solves
\begin{equation}\label{eq_f}
-\Delta	_{b} f= 2n \vert\partial f\vert^2 + 2ne^{2f}\quad \text{in } M^{2n+1}\, .
\end{equation}
We also introduce the function $g:M^{2n+1}\rightarrow\mathbb{C}$ such that
\begin{equation}\label{g}
g=\vert\partial f\vert^2 + e^{2f} -if_0 \, ,
\end{equation}
then \eqref{eq_f} can be rewritten as
\begin{equation}\label{eq_g}
f_{\alpha\bar\alpha}=-ng\quad 	\text{in } M^{2n+1}\, .
\end{equation}
As done in \cite{JL}  we define the following tensors
\begin{equation}\label{tens}
\begin{array}{lll}
& D_{\alpha\beta}=f_{\alpha\beta}-2f_\alpha f_\beta & D_\alpha=D_{\alpha\beta}f_{\bar{\beta}} \\
& E_{\alpha\bar\beta}=f_{\alpha\bar\beta}-\frac{1}{n}f_{\gamma\bar\gamma}\delta_{\alpha\bar\beta} \quad & E_\alpha=E_{\alpha\bar\beta}f_{\beta} \\
&G_\alpha=if_{0\alpha}-if_0f_\alpha+ e^{2f}f_\alpha+\vert\partial f\vert^2 f_{\alpha}\, .
\end{array}
\end{equation}
The above tensors, introduced in \cite{JL}, will be important in our argument. Moreover (see also \cite{OuXuMa}) we observe that
\begin{equation}\label{tensg}
\begin{array}{lll}
& E_{\alpha\bar\beta}=f_{\alpha\bar\beta}+g\delta_{\alpha\bar\beta}  & E_\alpha=f_{\alpha\bar\beta}f_\beta+gf_\alpha\\
&D_\alpha=f_{\alpha\beta}f_{\bar\beta}-2|\partial f|^2f_\alpha & G_\alpha=if_{0\alpha}+gf_\alpha\\
&\vert\partial f\vert^2_{\bar{\alpha}}=D_{\bar{\alpha}}+E_{\bar{\alpha}}+ \bar{g} f_{\bar{\alpha}}-2 f_{\bar{\alpha}}e^{2f} \\
&g_{\bar\alpha}=D_{\bar\alpha}+E_{\bar\alpha}+G_{\bar\alpha} & \bar g_\alpha=D_{\alpha}+E_{\alpha}+G_{\alpha}.
\end{array}
\end{equation}
We are now in a position to present the following differential identity which generalizes the one in \cite{JL} to the Sasakian setting (see e.g. \cite{W1,W2,OuXuMa})

\begin{proposition}\label{prop_JL} With the notations above, we have
$$
\mathcal{Q}=\mathrm{Re}Z_{\bar\alpha}\left\lbrace e^{2(n-1)f}\left[ \left( g+3if_0\right) E_\alpha + \left( g-if_0\right)D_\alpha -3i f_0 G_\alpha\right] \right\rbrace\, ,
$$
where
\begin{align}\label{defQ}
\mathcal{Q}=& e^{2nf}\left( \vert E_{\alpha\bar{\beta}}\vert^2 + \vert D_{\alpha\beta}\vert^2\right)   \\ \nonumber
\quad&+ e^{2(n-1)f}\left( \vert G_\alpha \vert^2 + \vert G_\alpha+ D_\alpha\vert^2 + \vert G_\alpha -E_\alpha\vert^2 + \vert D_{\alpha\beta}f_{\bar{\gamma}}+ E_{\alpha\bar{\gamma}}f_\beta\vert^2\right)\\\nonumber
\quad&+ e^{2(n-1)f}\left(n^2|\partial f|^2+e^{2f}\right)R_{\alpha\bar\beta}f_{\alpha}f_{\bar\beta}\,.
\end{align}
\end{proposition}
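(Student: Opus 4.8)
The plan is to verify the identity by expanding the right-hand side directly, since it is presented as the real part of the $Z_{\bar\alpha}$-divergence of the explicit vector field
$$
V_\alpha := e^{2(n-1)f}\left[(g+3if_0)E_\alpha + (g-if_0)D_\alpha - 3if_0\,G_\alpha\right].
$$
First I would apply the Leibniz rule, separating $Z_{\bar\alpha}V_\alpha$ into the piece where $Z_{\bar\alpha}$ hits the weight $e^{2(n-1)f}$ (producing $2(n-1)f_{\bar\alpha}$ times the bracket), the pieces where it hits the scalar coefficients $g\pm if_0$ and $f_0$, and the pieces where it hits the vector parts $E_\alpha$, $D_\alpha$, $G_\alpha$. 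For the coefficients I would substitute $g_{\bar\alpha}=D_{\bar\alpha}+E_{\bar\alpha}+G_{\bar\alpha}$ from \eqref{tensg} and express $f_{0\bar\alpha}$ through $G_{\bar\alpha}$ using $G_\alpha=if_{0\alpha}+gf_\alpha$ and its conjugate. The computation is organized exactly as in \cite{JL}: process the three summands one at a time, take the real part at the end, and collect.

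The core of the work is computing the three vector divergences $Z_{\bar\alpha}E_\alpha$, $Z_{\bar\alpha}D_\alpha$, $Z_{\bar\alpha}G_\alpha$ from the definitions in \eqref{tens} and \eqref{tensg}. Each expansion produces three kinds of terms: \emph{second-order cross products} such as $f_{\alpha\bar\beta}f_{\beta\bar\alpha}$ and $f_{\alpha\beta}f_{\bar\beta\bar\alpha}$, which (after the commutation rules and the trace equation are applied) reassemble, via $E_{\alpha\bar\beta}=f_{\alpha\bar\beta}+g\delta_{\alpha\bar\beta}$ and $D_{\alpha\beta}=f_{\alpha\beta}-2f_\alpha f_\beta$, into the weighted norms $e^{2nf}\left(|E_{\alpha\bar\beta}|^2+|D_{\alpha\beta}|^2\right)$ and the quartic term $e^{2(n-1)f}|D_{\alpha\beta}f_{\bar\gamma}+E_{\alpha\bar\gamma}f_\beta|^2$; terms in which the structure equation $f_{\alpha\bar\alpha}=-ng$ of \eqref{eq_g} eliminates traces; and \emph{third-order terms} such as $f_{\alpha\bar\beta\bar\alpha}f_\beta$ and $f_{\alpha\beta\bar\alpha}f_{\bar\beta}$. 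Taking the real part is essential: the coefficients $g\pm if_0$ and $-3if_0$ are tuned so that, modulo the weights, the remaining first-order/second-order cross products combine into the perfect squares $|G_\alpha|^2$, $|G_\alpha+D_\alpha|^2$ and $|G_\alpha-E_\alpha|^2$ of \eqref{defQ}.

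The only genuinely geometric input appears when the third-order terms are symmetrized: one commutes a barred and an unbarred derivative using
$$
f_{\alpha\beta\bar\gamma}-f_{\alpha\bar\gamma\beta}=2i\delta_{\beta\bar\gamma}f_{\alpha0}+R_{\bar\mu\alpha\beta\bar\gamma}f_\mu,
$$
so that a trace of the form $f_{\alpha\beta\bar\alpha}$ is replaced by $f_{\alpha\bar\alpha\beta}=-ng_\beta$ (by \eqref{eq_g}) plus a commutator. Here the Sasakian hypothesis $A=0$ is used decisively: it guarantees the simplified rules $f_{0\alpha}=f_{\alpha0}$ and $f_{\alpha\beta0}=f_{\alpha0\beta}$, so no torsion terms intervene and the only surviving geometric contribution is the contraction $R_{\bar\mu\alpha\beta\bar\alpha}f_\mu$, which reduces to the Tanaka-Webster Ricci tensor $R_{\alpha\bar\beta}$. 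After these commutators are distributed across the three divergences and multiplied by their coefficients $g+3if_0$, $g-if_0$, $-3if_0$, the real part of the accumulated curvature terms should collapse to exactly $e^{2(n-1)f}(n^2|\partial f|^2+e^{2f})R_{\alpha\bar\beta}f_\alpha f_{\bar\beta}$ (a real quantity, since $R_{\alpha\bar\beta}$ is Hermitian).

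The main obstacle is bookkeeping rather than conceptual. The delicate points are: checking that every non-square first/second-order cross term cancels or completes a square after taking the real part, so that the right-hand side reduces to the manifestly nonnegative combination in \eqref{defQ} plus the curvature term; and verifying that the several curvature contributions, weighted by $g\pm if_0$ and $f_0$ and combined through $g=|\partial f|^2+e^{2f}-if_0$, assemble into precisely the coefficient $n^2|\partial f|^2+e^{2f}$ and not some other polynomial in $f_0$, $|\partial f|^2$, $e^{2f}$. A useful consistency check at each stage is to set $R\equiv 0$ and the torsion to zero, which must return the flat Jerison-Lee identity of \cite{JL}; any discrepancy then localizes the error.
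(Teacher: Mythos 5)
Your plan is correct in outline, but it takes a genuinely different route from the paper. You propose to establish the identity \emph{ab initio}: expand the divergence of the vector field $V_\alpha$ by the Leibniz rule, compute $Z_{\bar\alpha}E_\alpha$, $Z_{\bar\alpha}D_\alpha$, $Z_{\bar\alpha}G_\alpha$ from \eqref{tens}--\eqref{tensg}, trade third-order terms for $-ng_\beta$ via \eqref{eq_g} and the commutation rule $f_{\alpha\beta\bar\gamma}-f_{\alpha\bar\gamma\beta}=2i\delta_{\beta\bar\gamma}f_{\alpha 0}+R_{\bar\mu\alpha\beta\bar\gamma}f_\mu$, and invoke vanishing torsion to suppress all terms beyond the Ricci contraction. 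This is exactly the mechanism by which such identities were originally derived in \cite{JL} and in its curved analogues, and you correctly locate the two geometric inputs (the curvature commutator and $A=0$) as well as the structure of the cancellations. The paper instead does \emph{not} redo this computation: it quotes the already-established Jerison--Lee identity of \cite[Remark 3.2]{OuXuMa} (building on \cite{W2}) for positive solutions of $-\Delta_b u+\lambda u=u^{\frac{n+2}{n}}$ on torsion-free pseudohermitian manifolds, relates the tensors there to the present ones via the explicit scalings $\widetilde{D}_\alpha=n^2e^{nf}D_\alpha$, $\widetilde{E}_\alpha=n^2e^{nf}E_\alpha$, $\widetilde{G}_\alpha=n^2e^{nf}G_\alpha$, $\widetilde{D}_{\alpha\beta}=ne^{nf}D_{\alpha\beta}$, $\widetilde{E}_{\alpha\bar\beta}=ne^{nf}E_{\alpha\bar\beta}$, and sets $\lambda=0$. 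The paper's route buys brevity and safety by outsourcing the bookkeeping to a verified source; your route buys self-containedness and transparency about where curvature enters, at the price of the very bookkeeping that constitutes the proof. On that point one caveat: your write-up asserts rather than verifies the two decisive collapses --- that all non-square cross terms complete the squares $|G_\alpha|^2$, $|G_\alpha+D_\alpha|^2$, $|G_\alpha-E_\alpha|^2$, $|D_{\alpha\beta}f_{\bar\gamma}+E_{\alpha\bar\gamma}f_\beta|^2$, and that the curvature contributions assemble with coefficient exactly $n^2|\partial f|^2+e^{2f}$ (your own word is ``should''). As stated, your proposal is a sound blueprint whose execution would reproduce the identity; the flat-case consistency check you suggest is a good safeguard, though it cannot detect an error in the curvature coefficient itself, for which comparing against the $\lambda=0$ specialization of the \cite{OuXuMa} identity (as the paper does) is the sharper test.
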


\begin{proof}
On the Heisenberg group, this is exactly formula (4.2) in \cite{JL}. The
above generalization can be proved along the same lines in \cite{W2} where a
modified version was established for the equation
\[
-\frac{4}{n^{2}}\Delta_{b}u+u=u^{\frac{n+2}{n}}
\]
on any pseudohermitian manifold with zero torsion. By scaling the pseudohermitian structure and $u$ properly, one can get an identity for a positive solution to $$
-\Delta_b u +\lambda u=u^{\frac{n+2}{n}}\, ,
$$
were $\lambda$ is any positive constant. This is explicitly worked out  in \cite[Remark 3.2]{OuXuMa}. Namely, the following Jerison-Lee identity holds for a positive solution to the above equation:
\begin{align*}
u^{\frac{2}{n}}&\mathrm{Re}Z_{\bar{\alpha}}\left\{ u^{-\frac{2}{n}}\left[ \left(\frac{|\partial u|^2}{u}+u^{\frac{n+2}{n}}+\lambda u\right) (\tilde D_\alpha+\tilde E_\alpha)-n i u_0(2\tilde  D_\alpha-2\tilde E_\alpha+3\tilde  G_\alpha)\right]\right\} \\
=& u^{-2} |\tilde D_{\alpha\beta}u_{\bar{\gamma}}+\tilde E_{\alpha\bar{\gamma}}u_{\beta}|^2+\frac{|\partial u|^2}{u^2}\mathscr{R}+\left(u^{\frac{2}{n}}+\lambda\right)\left( |\tilde  D_{\alpha\beta}|^2+|\tilde  E_{\alpha\bar{\beta}}|^2+\mathscr{R}\right) \\
&+ |\tilde G_\alpha+\tilde D_\alpha|^2 + |\tilde G_\alpha-\tilde E_\alpha|^2+|\tilde G_\alpha|^2\, ,
\end{align*}
where $\tilde{D},\tilde{E}, \tilde{G}$ are defined in \cite[Section 2]{OuXuMa}, $u$ solves 
$$
-\Delta_b u +\lambda u=u^{\frac{n+2}{n}}\, ,
$$
and 
$$
\mathscr{R}=R_{\alpha\bar{\beta}}u_{\alpha}u_{\bar{\beta}}-\frac{4(n+1)}{n^2}\lambda|\partial u|^2\, .
$$
From \eqref{def_f} it is easy to see that 
$$
\widetilde{D}_{\alpha}=n^2 e^{nf} D_\alpha\, , \quad \widetilde{E}_\alpha=n^2e^{nf}E_\alpha \, , \quad \widetilde{G}_\alpha=n^2 e^{nf}G_\alpha
$$
and 
$$
\widetilde{D}_{\alpha\beta}= n e^{nf}D_{\alpha\beta}\, , \quad \widetilde{E}_{\alpha\beta}=n e^{nf}E_{\alpha\beta} \, . 
$$
Eventually, by choosing $\lambda=0$ the conclusion follows after some manipulation.
\end{proof}

From this proposition we obtain the next lemma which extends a similar result in \cite{CLMR}.
\begin{lemma}\label{lemma0}
With the notations above, for every $n\geq 1$, every (real) nonnegative cut-off function $\eta$ with compact support, every $s>2$ and every $0\leq\beta<\frac{1}{12}$ we have
$$
\int_{M} \mathcal{Q}\Psi^{-\beta}\eta^s\leq  C \left(\int_{\text{supp}|\partial\eta|}\mathcal{Q}\Psi^{-\beta}\eta^s\right)^{1/2}\left(\int_{\text{supp}|\partial\eta|} e^{2(n-1)f}|g|^2\Psi^{-\beta}|\partial\eta|^2\eta^{s-2}\right)^{1/2}\,,
$$
for some $C>0$, where
\begin{equation}\label{defPsi}
\Psi:=g\bar{g}\, e^{-2f}=\vert g\vert^2 \, e^{-2f}\, .
\end{equation}
In particular
$$
\int_{M} \mathcal{Q}\Psi^{-\beta}\eta^s\leq  C \int_{M}e^{2(n-1)f}\vert g\vert^2 \Psi^{-\beta} \vert\partial\eta\vert^2 \eta^{s-2}\,.
$$
\end{lemma}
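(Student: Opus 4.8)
The plan is to test the divergence identity of Proposition~\ref{prop_JL} against the weight $\Psi^{-\beta}\eta^s$, integrate by parts, and then absorb the resulting interior term into the left-hand side using the smallness of $\beta$. Write $\mathcal{Q}=\mathrm{Re}\,Z_{\bar\alpha}(W_\alpha)$ with
\[
W_\alpha:=e^{2(n-1)f}\big[(g+3if_0)E_\alpha+(g-if_0)D_\alpha-3if_0\,G_\alpha\big].
\]
Multiplying by the real weight $\phi:=\Psi^{-\beta}\eta^s$ and using $\phi\,\mathrm{Re}\,Z_{\bar\alpha}W_\alpha=\mathrm{Re}\,Z_{\bar\alpha}(\phi W_\alpha)-\mathrm{Re}(W_\alpha Z_{\bar\alpha}\phi)$, I would integrate over $M$; since $\eta$ has compact support and $\mathrm{Re}\,Z_{\bar\alpha}(\cdot)$ is a horizontal divergence (no extra terms, as $M$ is Sasakian and torsion-free), the first term vanishes by the divergence theorem for the Tanaka--Webster connection. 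Expanding $Z_{\bar\alpha}\phi=-\beta\Psi^{-\beta-1}(Z_{\bar\alpha}\Psi)\eta^s+s\Psi^{-\beta}\eta^{s-1}Z_{\bar\alpha}\eta$ then gives
\[
\int_M\mathcal{Q}\Psi^{-\beta}\eta^s=\beta\int_M\mathrm{Re}\big(W_\alpha Z_{\bar\alpha}\Psi\big)\Psi^{-\beta-1}\eta^s-s\int_M\mathrm{Re}\big(W_\alpha Z_{\bar\alpha}\eta\big)\Psi^{-\beta}\eta^{s-1}=:I_1+I_2.
\]

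The term $I_2$ is the one that reproduces the right-hand side. I would first record the pointwise consequences of the sum-of-squares structure of $\mathcal{Q}$: since the curvature contribution $R_{\alpha\bar\beta}f_\alpha f_{\bar\beta}\ge0$, each squared block is dominated by $\mathcal{Q}$, so $|G_\alpha|,|D_\alpha|,|E_\alpha|\le C\mathcal{Q}^{1/2}e^{-(n-1)f}$ (using the blocks $|G_\alpha|^2$, $|G_\alpha+D_\alpha|^2$, $|G_\alpha-E_\alpha|^2$). Combined with the elementary relations $\mathrm{Re}\,g=|\partial f|^2+e^{2f}\ge e^{2f}>0$, $|f_0|=|\mathrm{Im}\,g|\le|g|$ and $|g|=\Psi^{1/2}e^{f}$, this yields $|W_\alpha|\le Ce^{(n-1)f}|g|\,\mathcal{Q}^{1/2}$. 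Applying Cauchy--Schwarz to $I_2$, splitting the integrand as $(\mathcal{Q}^{1/2}\Psi^{-\beta/2}\eta^{s/2})\cdot(e^{(n-1)f}|g|\Psi^{-\beta/2}|\partial\eta|\eta^{s/2-1})$ (here $s>2$ guarantees the last power of $\eta$ is nonnegative), produces exactly the claimed product of the two integrals over $\mathrm{supp}|\partial\eta|$.

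The main obstacle is the interior term $I_1$, which must be absorbed on the left: a crude estimate of $Z_{\bar\alpha}\Psi$ is too lossy, and the point is a cancellation. Differentiating $\Psi=g\bar g\,e^{-2f}$ and substituting $Z_{\bar\alpha}g=D_{\bar\alpha}+E_{\bar\alpha}+G_{\bar\alpha}$ together with the companion expansion $Z_{\bar\alpha}\bar g=D_{\bar\alpha}+E_{\bar\alpha}-G_{\bar\alpha}+2\bar g f_{\bar\alpha}$ obtained from \eqref{tensg}, the contribution $g\cdot 2\bar g f_{\bar\alpha}\,e^{-2f}=2f_{\bar\alpha}\Psi$ exactly cancels the $-2f_{\bar\alpha}\Psi$ coming from differentiating $e^{-2f}$, leaving the clean expression
\[
Z_{\bar\alpha}\Psi=e^{-2f}\big[2(|\partial f|^2+e^{2f})(D_{\bar\alpha}+E_{\bar\alpha})+2if_0\,G_{\bar\alpha}\big].
\]
Using $|\partial f|^2+e^{2f}=\mathrm{Re}\,g\le|g|$ and $|f_0|\le|g|$ gives $|Z_{\bar\alpha}\Psi|\le Ce^{-(n+1)f}|g|\,\mathcal{Q}^{1/2}$, whence $|W_\alpha Z_{\bar\alpha}\Psi|\le Ce^{-2f}|g|^2\mathcal{Q}=C\Psi\mathcal{Q}$ pointwise and therefore $|I_1|\le C\beta\int_M\mathcal{Q}\Psi^{-\beta}\eta^s$. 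The delicate book-keeping of this constant---tracking the coefficients $3if_0$ and the factors $2$, and running Cauchy--Schwarz through the exact blocks $G_\alpha+D_\alpha$, $G_\alpha-E_\alpha$ rather than the triangle inequality---is what produces the threshold $12\beta<1$. Since $\Psi\ge e^{2f}>0$ everywhere and $\eta$ has compact support, $\int_M\mathcal{Q}\Psi^{-\beta}\eta^s<\infty$, so for $0\le\beta<\frac1{12}$ the term $I_1$ may be moved to the left, giving
\[
(1-12\beta)\int_M\mathcal{Q}\Psi^{-\beta}\eta^s\le|I_2|\le C\Big(\int_{\mathrm{supp}|\partial\eta|}\mathcal{Q}\Psi^{-\beta}\eta^s\Big)^{1/2}\Big(\int_{\mathrm{supp}|\partial\eta|}e^{2(n-1)f}|g|^2\Psi^{-\beta}|\partial\eta|^2\eta^{s-2}\Big)^{1/2},
\]
which is the first inequality. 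Finally, reading this as $A\le CA'^{1/2}B^{1/2}$ with $A'=\int_{\mathrm{supp}|\partial\eta|}\mathcal{Q}\Psi^{-\beta}\eta^s\le A=\int_M\mathcal{Q}\Psi^{-\beta}\eta^s<\infty$ and $B$ the volume-type integral, dividing by $A^{1/2}$ yields $A\le C^2B$, which is the second (\emph{in particular}) inequality.
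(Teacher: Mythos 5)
Your proposal is correct and follows essentially the same route as the paper's proof: the same integration by parts against $\Psi^{-\beta}\eta^s$, the same cancellation of the $2f_{\bar\alpha}\Psi$ terms in $Z_{\bar\alpha}\Psi$ via \eqref{gbarra}, the same block estimates $|D_{\bar\alpha}+G_{\bar\alpha}|+|E_{\bar\alpha}-G_{\bar\alpha}|+|G_{\bar\alpha}|\le\sqrt{3}\,\sqrt{\mathcal{Q}}\,e^{-(n-1)f}$ producing the factor $2\sqrt{3}\cdot 2\sqrt{3}=12$ and hence the threshold $\beta<\tfrac{1}{12}$, and the same Cauchy--Schwarz treatment of the cut-off term. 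You even make explicit two points the paper leaves implicit --- that the absorption step requires $\int_M\mathcal{Q}\Psi^{-\beta}\eta^s<\infty$ (guaranteed by $\Psi\ge e^{2f}>0$ and the compact support of $\eta$) and that the sum-of-squares bounds on the blocks use $R_{\alpha\bar\beta}f_\alpha f_{\bar\beta}\ge 0$ --- so no gap remains.
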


\begin{proof}
We define
$$
\mathcal{I}_1:=\int_{M} \mathcal{Q}\Psi^{-\beta}\eta^s\,.
$$
From Proposition \ref{prop_JL} we obtain
\begin{align}\label{eq0}
\int_{M} \mathcal{Q}\Psi^{-\beta}\eta^s=&\int_{M}  \mathrm{Re}Z_{\bar\alpha}\left\{e^{2(n-1)f} \left[ \left( g+3if_0\right) E_\alpha + \left( g-if_0\right)D_\alpha -3i f_0 G_\alpha\right]\right\}  \Psi^{-\beta} \eta^s \nonumber \\
=& \beta \int_{M}  \mathrm{Re}\left\lbrace e^{2(n-1)f}\left[ \left( g+3if_0\right) E_\alpha + \left( g-if_0\right)D_\alpha -3i f_0 G_\alpha\right]\Psi_{\bar{\alpha}}\right\rbrace \Psi^{-\beta-1} \eta^s \nonumber  \\
&-s \int_{M} \mathrm{Re}\left\lbrace e^{2(n-1)f}\left[ \left( g+3if_0\right) E_\alpha + \left( g-if_0\right)D_\alpha -3i f_0 G_\alpha\right]\eta_{\bar{\alpha}} \right\rbrace \Psi^{-\beta} \eta^{s-1}
\end{align}
where we integrate by parts. We now observe that, on the one hand, from the definition of $g$ \eqref{g} we have
\begin{align*}
\left( g+3if_0\right) E_\alpha + \left( g-if_0\right)D_\alpha -3i f_0 G_\alpha = &\,  \left( D_\alpha+ G_\alpha\right) (g-if_0) + \left( E_\alpha - G_\alpha\right) (g+3if_0)+ if_0G_\alpha \\
=& \left( D_\alpha+ G_\alpha\right) \left( \vert\partial f\vert^2 + e^{2f} -2i f_0\right)\\
&\, + \left( E_\alpha - G_\alpha\right) \left( \vert\partial f\vert^2 + e^{2f} +2if_0\right)+ if_0G_\alpha    \, ,
\end{align*}
and so, from Cauchy-Schwarz inequality
\begin{align*}
\vert \left( g+3if_0\right) E_\alpha + \left( g-if_0\right)D_\alpha -3i f_0 G_\alpha \vert\leq & \left\vert D_\alpha+ G_\alpha\right\vert \sqrt{\vert\partial f\vert^4 + e^{4f}+2\vert\partial f\vert^2 e^{2f} +4 f_0^2}\\
&\, + \left\vert E_\alpha - G_\alpha\right\vert \sqrt{\vert\partial f\vert^4 + e^{4f}+2\vert\partial f\vert^2 e^{2f} +4 f_0^2}\\
&\,+ \vert f_0\vert\vert G_\alpha\vert    \\
 \leq &\,  2\vert g\vert \left( \left\vert D_\alpha+ G_\alpha\right\vert +\left\vert E_\alpha - G_\alpha\right\vert  + \vert G_\alpha\vert \right)\\
 \leq &\, 2\sqrt{3}\vert g\vert\sqrt{\mathcal{Q}}e^{-(n-1)f}
\end{align*}
where we used the fact that
\begin{equation}\label{mod_g}
\vert g\vert=\sqrt{\vert\partial f\vert^4 + e^{4f}+2\vert\partial f\vert^2 e^{2f} + f_0^2}\, .
\end{equation}
Summing up, we have obtained the following
\begin{equation}\label{p1}
\vert \left( g+3if_0\right) E_\alpha + \left( g-if_0\right)D_\alpha -3i f_0 G_\alpha\vert \leq  \,  2\sqrt{3}\vert g\vert\sqrt{\mathcal{Q}}e^{-(n-1)f}\, .
\end{equation}
On the other hand, from \eqref{tensg} we have
\begin{align*}
\Psi_{\bar{\alpha}}=&\left[ \left( g\bar{g}\right) e^{-2f}\right]_{\bar{\alpha}}=e^{-2f} \left( \bar{g}g_{\bar{\alpha}} + g \bar{g}_{\bar{\alpha}}\right) - 2\left( g\bar{g}\right)f_{\bar{\alpha}}e^{-2f} \\
=& \,  e^{-2f} \left[ \bar{g}\left( D_{\bar\alpha}+E_{\bar\alpha}+G_{\bar\alpha}\right) + g \left( D_{\bar\alpha}+E_{\bar\alpha}-G_{\bar\alpha}+2\bar{g}f_{\bar{\alpha}}\right)\right] -2\left( g\bar{g}\right) f_{\bar{\alpha}}e^{-2f} \\
=\, & e^{-2f} \left[ D_{\bar{\alpha}} \left( g + \bar{g}\right) + E_{\bar{\alpha}}\left( g + \bar{g}\right) + G_{\bar{\alpha}}\left(\bar{g}-g \right) \right] \\
=&\,   e^{-2f} \left[ \left( D_{\bar{\alpha}} + G_{\bar{\alpha}}\right) \left( g + \bar{g}\right) + \left( E_{\bar{\alpha}}-G_{\bar{\alpha}}\right) \left( g + \bar{g}\right) + G_{\bar{\alpha}}\left(\bar{g}-g \right) \right] \\
=& \,   2 e^{-2f} \left[ \left( D_{\bar{\alpha}} + G_{\bar{\alpha}}\right) \left( \vert\partial f\vert^2 + e^{2f} \right) + \left( E_{\bar{\alpha}}-G_{\bar{\alpha}}\right) \left( \vert\partial f\vert^2 + e^{2f} \right) + i G_{\bar{\alpha}}f_0 \right] \, ,
\end{align*}
where we used the fact that
\begin{equation}\label{gbarra}
\bar{g}_{\bar{\alpha}}= D_{\bar\alpha}+E_{\bar\alpha}-G_{\bar\alpha}+2\bar{g}f_{\bar{\alpha}} \, .
\end{equation}
Indeed by \eqref{tensg} we have
$$
G_{\bar{\alpha}}=-if_{0\bar{\alpha}}+\bar{g}f_{\bar{\alpha}}\,,
$$
hence
\begin{align*}
  \bar{g}_{\bar{\alpha}}= & \left(|\partial f|^2+e^{2f}+if_0\right)_{\bar{\alpha}} \\
   =& D_{\bar{\alpha}}+E_{\bar{\alpha}}+ \bar{g} f_{\bar{\alpha}}+if_{0\bar{\alpha}}\\
   =& D_{\bar\alpha}+E_{\bar\alpha}-G_{\bar\alpha}+2\bar{g}f_{\bar{\alpha}} \, .
\end{align*}
Moreover,  from Cauchy-Schwarz inequality
\begin{align*}
\vert\Psi_{\bar{\alpha}}\vert\leq & \,   2 e^{-2f} \left[ \left\vert D_{\bar{\alpha}} + G_{\bar{\alpha}}\right\vert \sqrt{\vert\partial f\vert^4 + e^{4f}+2\vert\partial f\vert^2 e^{2f}} \right.\\
&\, \left.+ \left\vert E_{\bar{\alpha}}-G_{\bar{\alpha}}\right\vert \sqrt{\vert\partial f\vert^4 + e^{4f}+2\vert\partial f\vert^2 e^{2f}} +  \vert G_{\bar{\alpha}}\vert \vert f_0\vert \right] \\
\leq & 2 e^{-2f} \vert g\vert  \left[ \vert D_{\bar{\alpha}} + G_{\bar{\alpha}}\vert + \vert E_{\bar{\alpha}}-G_{\bar{\alpha}}\vert + \vert G_{\bar{\alpha}}\vert  \right]  \\
\leq & \,  2\sqrt{3} e^{-2f}\vert g\vert\sqrt{\mathcal{Q}}e^{-(n-1)f}\, ,
\end{align*}
i.e.
\begin{equation}\label{p2}
\vert \Psi_{\bar{\alpha}}\vert \leq  \,  2\sqrt{3} e^{-(n+1)f}\vert g\vert\sqrt{\mathcal{Q}}\, .
\end{equation}
Hence, by substituting \eqref{p1} and \eqref{p2} in \eqref{eq0} we get, after a Cauchy-Schwarz inequality,
\begin{align*}
\mathcal{I}_1= \left\vert \int_{M} \mathcal{Q}\Psi^{-\beta}\eta^s\right\vert \leq & 12\beta \int_{M} \mathcal{Q}\vert g\vert^2 e^{-2f}    \Psi^{-\beta-1} \eta^s \nonumber + 2\sqrt{3}s \int_{M}e^{(n-1)f} \vert g\vert \sqrt{\mathcal{Q}}  \Psi^{-\beta}\vert\partial \eta\vert \eta^{s-1}  \\
= & \,  12\beta \int_{M} \mathcal{Q}   \Psi^{-\beta} \eta^s + 2\sqrt{3}s \int_{M} e^{(n-1)f}\sqrt{\mathcal{Q}} \vert g\vert\Psi^{-\beta}\vert\partial \eta\vert \eta^{s-1}\,.
\end{align*}
Since $\beta<\frac{1}{12}$ we find
\begin{equation*}
  \mathcal{I}_1\leq C\int_{M}e^{(n-1)f} \sqrt{\mathcal{Q}} \vert g\vert\Psi^{-\beta}\vert\partial \eta\vert \eta^{s-1}\,.
\end{equation*}
We now use H\"{o}lder's inequality and we have
$$
\mathcal{I}_1\leq C \left(\int_{\mathrm{supp}|\partial\eta|} \mathcal{Q}\Psi^{-\beta}\eta^s\right)^\frac{1}{2}\left( \int_{\mathrm{supp}|\partial\eta|} e^{2(n-1)f}\vert g\vert^2 \Psi^{-\beta}\vert\partial\eta\vert^2\eta^{s-2} \right)^\frac{1}{2} \, ,
$$
for some $C>0$. Now the conclusion easily follows.
\end{proof}

\

\subsection{Comparison results and applications} We recall the following sub-Laplacian comparison in the Sasakian setting and we refer to \cite[Proposition 5.5]{agralee} and \cite[Theorem 1.1]{leeli} (see also \cite{bau,asm}) for details.

\begin{theorem}\label{t-comp} Let $(M^{2n+1}, \theta, J, g)$ be a $(2n+1)$-dimensional complete Sasakian manifold.
\begin{enumerate}

\item If $n=1$ and the Tanaka-Webster curvature satisfies
$$
\ricc_H(X,X)\geq 0 \quad \text{for all horizontal vector field }\,X,
$$
then, outside $\mathrm{Cut}(x_0)$ and in the sense of distributions,
$$
\Delta_b r \leq \frac{4}{r}
$$
and
$$
\vol B_R \leq C R^4\quad\forall R>1,
$$
for some $C>0$.

\item If $n\geq 2$ and the Tanaka-Webster curvature satisfies
$$
\ricc_H(X,X)\geq  \mathrm{R}(X,JX, X, JX)\geq 0 \quad \text{for all horizontal vector field }\,X,
$$
then, outside $\mathrm{Cut}(x_0)$ and in the sense of distributions,
$$
\Delta_b r \leq \frac{n+2}{r}
$$
and
$$
\vol B_R \leq C R^{2n+2}\quad\forall R>1,
$$
for some $C>0$.
\end{enumerate}
\end{theorem}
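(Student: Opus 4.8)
The plan is to argue off the cut locus, where $r=d(x_0,\cdot)$ is smooth, to analyze $\Delta_b r$ along minimizing geodesics by a sub-Riemannian Jacobi--Riccati comparison, and then to integrate the Jacobian of the exponential map to control $\vol B_R$. Since the contact distribution $H(M)=\ker\theta$ is fat (strongly bracket generating of step two), there are no nontrivial abnormal minimizers, so every minimizing geodesic from $x_0$ is the projection of a normal extremal of the sub-Riemannian Hamiltonian $\mathcal H=\tfrac12|p_H|^2$ on $T^*M$. Because the manifold is Sasakian the Reeb field $T$ is Killing and preserves $H(M)$, $J$ and $g_\theta$; by Noether's theorem the Reeb momentum $p_0=\langle p,T\rangle$ is then a first integral of the geodesic flow. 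I would parametrize unit-speed initial covectors by $(\vartheta_0,p_0)\in S^{2n-1}\times\R$ and write $\exp_{x_0}(r,\vartheta_0,p_0)$ for the resulting geodesic normal coordinates, valid away from $\mathrm{Cut}(x_0)$, a closed set of measure zero.

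Along each geodesic $\gamma$, the second variation of energy produces a matrix Jacobi equation for the shape operator $S$ of the level sets of $r$, equivalently a matrix Riccati-type equation $\dot S+S^2+\mathcal R=0$ whose zeroth-order term $\mathcal R$ is the canonical curvature of the associated Jacobi curve. Using the Sasakian structure I would fix a parallel canonical frame adapted to the splitting generated by $\dot\gamma$, $J\dot\gamma$ and the remaining horizontal directions, in which $\mathcal R$ is expressed explicitly through the Tanaka-Webster curvature $R_{\alpha\bar\beta\gamma\bar\delta}$, the horizontal Ricci tensor $\ricc_H$ and the holomorphic term $\mathrm R(X,JX,X,JX)$. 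The curvature hypothesis then yields a lower bound $\mathcal R\ge\mathcal R_0$, where $\mathcal R_0$ is the corresponding curvature of the flat Heisenberg model $\H^n$: for $n=1$ the horizontal distribution is two-dimensional and $\ricc_H\ge0$ already controls the single relevant invariant, whereas for $n\ge2$ the stronger assumption $\ricc_H(X,X)\ge\mathrm R(X,JX,X,JX)\ge0$ is exactly what is needed to dominate the extra off-diagonal holomorphic coupling absent in the three-dimensional case. Riccati comparison with the model then gives the trace bound $\Delta_b r=\trace S\le 4/r$ for $n=1$ and $\Delta_b r\le(n+2)/r$ for $n\ge2$.

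To pass to the volume estimate I would first extend the pointwise inequality across the cut locus: since $\mathrm{Cut}(x_0)$ has measure zero and $r$ acquires only a nonpositive singular contribution there (Calabi's trick), $\Delta_b r\le C/r$ holds on all of $M$ in the sense of distributions. However, $\Delta_b r$ controls only the horizontal part of the Jacobian $\mathcal J(r,\vartheta_0,p_0)$ of $\exp_{x_0}$; the vertical Reeb direction scales anisotropically (like $r^2$), so the volume growth cannot be read off from the trace bound alone. Instead I would compare the full matrix Riccati solution with the explicit one of $\H^n$, obtaining $\mathcal J(r,\cdot)\le C\,\mathcal J_0(r,\cdot)$, and then integrate over $(\vartheta_0,p_0)$ and over $r\le R$. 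Since the parabolic dilations of $\H^n$ give $\vol B_R=cR^{2n+2}$ in the model, this comparison yields $\vol B_R\le CR^{2n+2}$ for all $R>1$.

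The main obstacle is the rigorous set-up of the sub-Riemannian Jacobi framework and, above all, the identification of the abstract canonical curvature $\mathcal R$ with concrete Tanaka-Webster quantities under the Sasakian assumption; this is where the Killing property of $T$ and the algebra of the commutation rules for $\nabla$ enter, and it is genuinely harder for $n\ge2$ because the Riccati system is then higher-dimensional with coupling among the $\dot\gamma$, $J\dot\gamma$ and transverse horizontal directions. A secondary difficulty is the volume step itself: because of the anisotropic vertical scaling one must track the full Jacobian rather than its horizontal trace, and handle both the cut locus and the non-compactness of the $p_0$-fibre when integrating against the model.
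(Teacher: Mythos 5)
The first thing to note is that the paper does not prove Theorem \ref{t-comp} at all: it is imported from the literature, with part (1) cited from \cite[Proposition 5.5]{agralee} and part (2) from \cite[Theorem 1.1]{leeli} (see also \cite{bau,asm}). Your proposal therefore cannot be matched against an internal argument, but measured against those references it reconstructs their strategy faithfully: absence of abnormal minimizers for a contact (fat) distribution, conservation of the Reeb momentum $p_0$ because $T$ is Killing in the Sasakian case, reduction to a matrix Riccati equation for the canonical Jacobi curve, identification of the canonical curvature with Tanaka--Webster data, and the observation that for $n\geq 2$ the extra hypothesis $\ricc_H(X,X)\geq \mathrm{R}(X,JX,X,JX)$ is what dominates the coupling of the $J\dot\gamma$ and Reeb directions --- this is exactly the $k_1=k_2=0$ case of the Lee--Li comparison theorem, and the $n=1$ case is Agrachev--Lee's.

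One step of your sketch is stated inaccurately, though the repair lives inside your own framework. You claim that $\Delta_b r$ controls only the horizontal part of the Jacobian, the vertical direction escaping the trace bound. In fact, away from $\mathrm{Cut}(x_0)$ the normal geodesics from $x_0$ are the integral curves of the horizontal gradient of $r$, whose flow in exponential coordinates is simply $(r,\vartheta_0,p_0)\mapsto(r+s,\vartheta_0,p_0)$; since $\Delta_b r$ is the divergence of that gradient with respect to $dV_\theta$, one has $\Delta_b r=\partial_r\log\mathcal{J}$ for the \emph{full} Jacobian $\mathcal{J}(r,\vartheta_0,p_0)$, Reeb direction included. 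The genuine obstruction is different: integrating $\partial_r\log\mathcal{J}\leq (2n+2)/r$ from a fixed radius and then naively over the noncompact fibre $p_0\in\R$ produces $\vol B_R\leq CR^{2n+3}$, one power too many. What rescues the exponent is the Bonnet--Myers-type consequence of the very same Riccati comparison: under the curvature hypotheses the conjugate (hence cut) time of the geodesic with Reeb momentum $p_0$ is at most the Heisenberg one, of order $1/|p_0|$, so at radius $r$ only covectors with $r|p_0|\leq C$ contribute, the fibre integral gains a factor $1/r$, and one recovers $\vol B_R\leq CR^{2n+2}$. Equivalently one can compare the full Jacobian with the model one as in \cite{leeli}, as you suggest in the alternative; but either way the cut-time bound is an essential ingredient that your sketch should make explicit rather than treat as an optional refinement, and it is the step where the noncompactness of the $p_0$-fibre you worry about is actually resolved.
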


\

As a consequence of the Laplacian comparisons in Theorem \ref{t-comp} we obtain the following
\begin{corollary}\label{c-lowbou}
 Let $(M^{2n+1}, \theta, J, g)$ be a $(2n+1)$-dimensional complete Sasakian manifold satisfying the curvature assumptions of Theorem \ref{t-comp}. Let $u$ be a positive superharmonic function in $M$, i.e $u\in C^2(M)$ and
$$\Delta_{b}u\leq 0\quad\text{in }M.$$
Then, there exists a constant $C>0$ such that
$$
u(x) \geq \frac{C}{r(x)^{\max\{3,n+1\}}},
$$
for any $x\in M$ with $r(x)>1$.
\end{corollary}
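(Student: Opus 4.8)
The plan is to integrate the sub-Laplacian comparison against a Green-type barrier and exploit the superharmonicity of $u$ through a mean-value / maximum-principle argument on Carnot-Carathéodory balls. The key input is the distributional inequality $\Delta_b r \leq \frac{N}{r}$ with $N=\max\{4,n+2\}$ (from Theorem \ref{t-comp}, cases (1) and (2)), which says the radial function $r$ behaves like the Euclidean radial coordinate in dimension $N+1$. The effective homogeneous dimension relevant for the lower bound is then $Q:=N+1=\max\{5,n+3\}$, so the expected decay exponent $\max\{3,n+1\}$ equals $Q-2$, exactly matching the Euclidean-type Green's function decay $r^{-(Q-2)}$.

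First I would construct an explicit radial supersolution barrier of the form $w(x)=\phi(r(x))$ with $\phi(r)=r^{-(Q-2)}$. Using the chain rule for the sub-Laplacian together with the fact that $|\partial r|=1$ (away from the cut locus) and the comparison $\Delta_b r \leq \frac{Q-1}{r}$, I would compute
\begin{equation*}
\Delta_b w = \phi''(r)|\partial r|^2 + \phi'(r)\Delta_b r \leq \phi''(r) + \frac{Q-1}{r}\phi'(r),
\end{equation*}
where the inequality uses $\phi'(r)<0$ and the comparison bound. For $\phi(r)=r^{-(Q-2)}$ the right-hand side vanishes identically, so $w$ is (distributionally) $\Delta_b$-harmonic outside the origin; more precisely $\Delta_b w \leq 0$ holds in the barrier sense I need. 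This computation is valid outside $\mathrm{Cut}(x_0)$, and since the cut locus has measure zero and the inequality $\Delta_b r\le (Q-1)/r$ holds in the distributional sense globally, the barrier inequality extends across the cut locus.

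Next I would run a comparison argument on the annular region between a fixed small sphere and a large sphere. Fix $R_0>1$ and set $m:=\min_{r=R_0}u>0$ (positive by hypothesis and continuity). On the annulus $\{R_0 \le r(x)\le R\}$ both $u$ and the rescaled barrier $c\, w$ (with $c$ chosen so that $c\,w\le u$ on $\{r=R_0\}$) are superharmonic, respectively harmonic; since $\Delta_b u \le 0 = \Delta_b(c\,w)$ and $u \ge c\,w$ on the inner boundary, while $u>0>$ the negative part is controlled on the outer boundary as $R\to\infty$ (here $w\to 0$), the comparison principle for the degenerate-elliptic operator $\Delta_b$ (which is hypoelliptic and satisfies the weak maximum principle on bounded CC-balls) yields $u \ge c\, w$ throughout. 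Letting $R\to\infty$ gives $u(x)\ge c\, r(x)^{-(Q-2)} = c\, r(x)^{-\max\{3,n+1\}}$ for all $r(x)>R_0$, and a routine adjustment of the constant handles the range $1<r(x)\le R_0$.

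\textbf{The main obstacle} I anticipate is making the maximum-principle step fully rigorous across the cut locus and at the singularity of the barrier at $x_0$. The inequality $\Delta_b r\le (Q-1)/r$ only holds in the distributional sense and $r$ fails to be smooth on $\mathrm{Cut}(x_0)$, so the pointwise chain-rule computation for $\Delta_b w$ must be reinterpreted as a distributional inequality $\Delta_b w\le 0$ on $M\setminus\{x_0\}$; the cleanest route is to verify that $w$ is a viscosity (or distributional) supersolution and then invoke a comparison principle valid for such supersolutions of the hypoelliptic operator $\Delta_b$. The degenerate ellipticity of $\Delta_b$ means one cannot quote the classical strong maximum principle verbatim, but the needed weak comparison on bounded domains follows from hypoellipticity together with Bony's maximum principle for sums of squares of vector fields satisfying Hörmander's condition, which applies here since the horizontal frame generates $TM$.
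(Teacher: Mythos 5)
Your proposal is correct and follows essentially the same route as the paper: the paper also builds the barrier $C\,r^{-q}$ with $q=\max\{3,n+1\}$ (precisely your exponent $Q-2=N-1$), verifies $\Delta_b\bigl(u-C r^{-q}\bigr)\leq 0$ via the sub-Laplacian comparison of Theorem \ref{t-comp}, and concludes with Bony's maximum principle. The only difference is cosmetic: the paper argues directly on the exterior domain $B_1^c$, using $\liminf_{r(x)\to\infty}v(x)\geq 0$ and the strong maximum principle to rule out a negative interior minimum, whereas you run the weak comparison on annuli $\{R_0\leq r\leq R\}$ and let $R\to\infty$ — both implementations are standard and equivalent here.
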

\begin{proof}
Let 
$$
C=\min_{\partial B_1} u 
$$
and consider the function 
$$
v(x)=u(x)-\frac{C}{r(x)^q} \, \quad \text{ for $x\in B_1^c$}\, ,
$$
with $q=\max\{3,n+1\}$. Then, $v\geq 0$ on $\partial B_1$ and a direct computation shows that 
$$
\Delta_b v\leq 0 \quad \text{ in $B_1^c$}\, ,
$$
where one has to use the Laplacian comparison in Theorem \ref{t-comp}. Since 
$$
\liminf_{r(x)\to\infty} v (x) \geq 0 \, ,
$$
if $\inf_{B_1^c} v<0$, then $v$ attains its negative absolute minimum at a point in $\overline{B}_1^c$. By the strong maximum principle (see e.g. \cite{Bony}), then $v$ must be constant and negative on its domain, a contradiction. Thus $v\geq 0$ in $B^c_1$ and the claim follows.
\end{proof}

The next lemma shows that a bound on the $L^{\frac{2n+2}{n}}$-norm is equivalent to a bound on the total energy for a positive solution to the equation.
 
\begin{lemma} Let $(M^{2n+1}, \theta, J, g)$ be a $(2n+1)$-dimensional complete Sasakian manifold such that 
$$
\vol B_R \leq C R^{2n+2}\quad\forall R>1,
$$
for some $C>0$, and let $u$ be a positive solution of \eqref{eqcritn}. Then, if $\sigma\geq 0$,
$$
\int_{B_R\setminus B_{R/2}} u^{\frac{2n+2}{n}} =O(R^\sigma) \quad \Longleftrightarrow \quad \int_{B_R\setminus B_{R/2}} |\partial u|^2 =O(R^\sigma) \, .
$$

\end{lemma}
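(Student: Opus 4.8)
The plan is to run a Caccioppoli-type argument, testing the equation against $u\eta^2$ for a nonnegative cut-off $\eta$ adapted to the annulus $A_R=B_R\setminus B_{R/2}$. Since $u\cdot u^{\frac{n+2}{n}}=u^{\frac{2n+2}{n}}$, multiplying \eqref{eqcritn} by $u\eta^2$ and integrating by parts via the divergence structure of $\Delta_b$ gives
$$2n^2\int_M u^{\frac{2n+2}{n}}\eta^2 \;=\; -\int_M (\Delta_b u)\,u\eta^2 \;=\; 2\int_M|\partial u|^2\eta^2 \;+\; 2\int_M u\,\mathrm{Re}\big(u_\alpha(\eta^2)_{\bar\alpha}\big).$$
Estimating the mixed term by Young's inequality ($u\,|u_\alpha|\,\eta\,|\eta_{\bar\alpha}|\le \tfrac12|\partial u|^2\eta^2+Cu^2|\partial\eta|^2$) and absorbing, I obtain the two-sided bound
$$\int_M|\partial u|^2\eta^2 \le C\!\int_M u^{\frac{2n+2}{n}}\eta^2 + C\!\int_M u^2|\partial\eta|^2, \qquad \int_M u^{\frac{2n+2}{n}}\eta^2 \le C\!\int_M|\partial u|^2\eta^2 + C\!\int_M u^2|\partial\eta|^2.$$

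The decisive point is to control the error $\int u^2|\partial\eta|^2$. Choosing $\eta\equiv1$ on $A_R$, supported on a fixed dilate $\hat A_R$ of $A_R$ with $|\partial\eta|\le C/R$, and writing $S=\mathrm{supp}\,|\partial\eta|\subset\hat A_R$, Hölder's inequality with the conjugate exponents $\tfrac{n+1}{n}$ and $n+1$ together with $\mathrm{Vol}\,B_R\le CR^{2n+2}$ yields
$$\int_M u^2|\partial\eta|^2 \le \frac{C}{R^2}\int_{S}u^2 \le \frac{C}{R^2}\Big(\int_{S}u^{\frac{2n+2}{n}}\Big)^{\frac{n}{n+1}}(\mathrm{Vol}\,S)^{\frac{1}{n+1}} \le C\Big(\int_{S}u^{\frac{2n+2}{n}}\Big)^{\frac{n}{n+1}},$$
since $(\mathrm{Vol}\,S)^{\frac1{n+1}}\le CR^2$. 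Thus the error is bounded by a \emph{sublinear} power of the $L^{\frac{2n+2}{n}}$-mass over the neighbourhood $\hat A_R$.

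For the implication $\Rightarrow$ this closes the argument immediately: writing $\Phi(\rho):=\int_{A_\rho}u^{\frac{2n+2}{n}}$, if $\Phi(\rho)=O(\rho^\sigma)$ then the right-hand side of the first Caccioppoli inequality is $O(R^\sigma)+O(R^{\sigma\frac{n}{n+1}})=O(R^\sigma)$, where I use $\sigma\ge0$ and $R\ge1$ so that $R^{\sigma\frac{n}{n+1}}\le R^\sigma$; hence $\int_{A_R}|\partial u|^2=O(R^\sigma)$.

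The converse $\Leftarrow$ is where I expect the main difficulty. Assuming $\int_{A_\rho}|\partial u|^2=O(\rho^\sigma)$, the second Caccioppoli inequality gives
$$\Phi(R)\;\le\; CR^\sigma + C\,\Big(\int_{\hat A_R}u^{\frac{2n+2}{n}}\Big)^{\frac{n}{n+1}},$$
and the obstacle is that the very quantity to be estimated reappears on the right, at the slightly larger scale $\hat A_R$. The exponent $\frac{n}{n+1}<1$ is what saves the day, but only through a bootstrap: starting from any crude polynomial bound $\Phi(\rho)=O(\rho^{\sigma_0})$ — available in the situations where the lemma is used, e.g. when $u$ is globally bounded (so $\Phi(\rho)\le C\,\mathrm{Vol}(A_\rho)\le C\rho^{2n+2}$) or $u\in L^{\frac{2n+2}{n}}(M)$ (so $\Phi(\rho)\le C$) — each application replaces the exponent $\sigma_k$ by $\sigma_{k+1}=\max\{\sigma,\tfrac{n}{n+1}\sigma_k\}$, and since $\tfrac{n}{n+1}<1$ this sequence descends to $\sigma$ in finitely many steps, giving $\Phi(R)=O(R^\sigma)$. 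The integration by parts and the Hölder estimate of the error term are routine; justifying the initial crude bound and controlling the scale enlargement produced by the cut-off are the points that require care.
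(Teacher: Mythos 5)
Your forward implication is correct and close in spirit to the paper's, but your reverse implication has a genuine gap. After applying H\"older with exponents $\frac{n+1}{n}$ and $n+1$, your error term is the \emph{multiplicative, sublinear} quantity $C\bigl(\int_{\hat A_R}u^{\frac{2n+2}{n}}\bigr)^{\frac{n}{n+1}}$ at a strictly larger scale, and to close the resulting recursion $\sigma_{k+1}=\max\{\sigma,\tfrac{n}{n+1}\sigma_k\}$ you must start from some crude polynomial bound $\int_{A_\rho}u^{\frac{2n+2}{n}}=O(\rho^{\sigma_0})$. As you yourself concede, you can only produce such a bound under extra hypotheses (global boundedness of $u$, or $u\in L^{\frac{2n+2}{n}}(M)$) that are \emph{not} in the statement of the lemma: the lemma assumes only that $u$ is a positive solution and that $\mathrm{Vol}\,B_R\leq CR^{2n+2}$. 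Under these hypotheses alone there is no a priori polynomial control of the $L^{\frac{2n+2}{n}}$-mass on annuli, so your bootstrap cannot be initialized and the implication $\Leftarrow$ is not proved in the stated generality. (Note also that your weaker fallback, a Young step $C\Phi(\hat A_R)^{n/(n+1)}\le \varepsilon\Phi(\hat A_R)+C_\varepsilon$, does not help: the $\varepsilon$-term lives at the enlarged scale $\hat A_R$ and cannot be absorbed into $\Phi(R)$ on the left.)

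The paper avoids all of this with one device: test against $u\varphi^s$ with a \emph{high} cut-off power $s\geq 2n+2$ (rather than your $\eta^2$) and apply Young's inequality \emph{pointwise} before integrating,
\begin{equation*}
u^2\varphi^{s-2}\,|\partial\varphi|^2 \;\leq\; \varepsilon\, u^{\frac{2n+2}{n}}\varphi^{s} \;+\; C_\varepsilon\, \varphi^{\,s-2n-2}\,|\partial\varphi|^{2n+2},
\end{equation*}
which is legitimate precisely because $s-2n-2\geq 0$ keeps the $u$-free conjugate term bounded; with $s=2$ that term would be $\eta^{-2n}|\partial\eta|^{2n+2}$, which blows up where $\eta\to 0$ --- this is why your quadratic cut-off forces you into the integral H\"older route. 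In the paper's version the $u$-dependent part of the error carries the \emph{same weight} $\varphi^s$ as the main terms, so for each fixed $R$ it is absorbed into the right-hand side (for $\Rightarrow$) or the left-hand side (for $\Leftarrow$; absorption is licit since $\int u^{\frac{2n+2}{n}}\varphi^s<\infty$ on compact balls), while the $u$-free part integrates to $CR^{-2n-2}\,\mathrm{Vol}\,B_{2R}\leq C$ by the volume hypothesis. Both implications then follow immediately, symmetrically and unconditionally, with no bootstrap and no initial bound. To repair your proof, replace $\eta^2$ by $\varphi^s$, $s\geq 2n+2$, and perform the Young splitting pointwise as above; your remaining steps (the Caccioppoli identity and the dyadic covering of the enlarged annulus by $A_{2R}\cup A_R\cup A_{R/2}$, using $\sigma\geq 0$ and $R\geq 1$) are fine.
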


\begin{proof}	
Let $\varphi$ be a smooth real cut-off function such that $\varphi\equiv 1$ in $B_R\setminus B_{R/2}$, $\varphi\equiv 0$ in $B_{R/4}\cup B_{2R}^c$ and such that $\vert \partial\varphi\vert\leq \frac{c}{R}$. Multiplying \eqref{eqcritn} by $u\varphi^s$, where $s\geq 2n+2$, and integrating by parts we obtain
$$
2\int_{B_{2R}} |\partial u|^2\varphi^s + s\int_{B_{2R}} u\varphi^{s-1}\left(u_\alpha\varphi_{\bar{\alpha}}+ u_{\bar{\alpha}}\varphi_{\alpha}\right) = 2n^2 \int_{B_{2R}} u^{\frac{2+2n}{n}}\varphi^s\, .
$$
Moreover, from Young's inequality (twice), for every $\varepsilon>0$
\begin{align*}
\left|\int_{B_{2R}} u\varphi^{s-1}\left(u_\alpha\varphi_{\bar{\alpha}}+ u_{\bar{\alpha}}\varphi_{\alpha}\right)\right| & \leq \varepsilon\int_{B_{2R}} |\partial u|^2\varphi^s + \frac{1}{4\varepsilon} \int_{B_{2R}} u^2\varphi^{s-2}|\partial\varphi|^2 \\
&\leq \varepsilon\int_{B_{2R}} |\partial u|^2\varphi^s +\varepsilon\int_{B_{2R}} u^{\frac{2+2n}{n}}\varphi^s + C\int_{B_{2R}} \varphi^{s-2n-2}|\partial\varphi|^{2n+2}\\ 
&\leq \varepsilon\int_{B_{2R}} |\partial u|^2\varphi^s +\varepsilon\int_{B_{2R}} u^{\frac{2+2n}{n}}\varphi^s + C R^{-2-2n}\mathrm{Vol}B_{2R} \\
&\leq \varepsilon\int_{B_{2R}} |\partial u|^2\varphi^s +\varepsilon\int_{B_{2R}} u^{\frac{2+2n}{n}}\varphi^s + C\, ,
\end{align*}
for some $C>0$. Now, assuming 
$$
\int_{B_R\setminus B_{R/2}} u^{\frac{2n+2}{n}} \leq C R^\sigma  \, \quad \forall R>1\, , 
$$
holds, by choosing $\varepsilon>0$ small enough we have 
\begin{align*}
\int_{B_R\setminus B_{R/2}} |\partial u|^2 &\leq  C\left( \int_{B_{2R}\setminus B_{R/4}} u^{\frac{2+2n}{n}}+1\right)\\
&= C\left(\int_{B_{2R}\setminus B_{R}} u^{\frac{2+2n}{n}} + \int_{B_{2}\setminus B_{R/2}} u^{\frac{2+2n}{n}} + \int_{B_{R/2}\setminus B_{R/4}} u^{\frac{2+2n}{n}} +1\right) \leq CR^{\sigma}\, ,
\end{align*}
for all $R>2$. By continuity we deduce 
$$
\int_{B_R\setminus B_{R/2}} |\partial u|^2 \leq C R^{\sigma} \quad \forall R\geq 1\, ,
$$
for some $C>0$. The reverse implication can be proved in a similar way.
\end{proof}

\begin{corollary}
Let $(M^{2n+1}, \theta, J, g)$ be a $(2n+1)$-dimensional complete Sasakian manifold such that 
$$
\vol B_R \leq C R^{2n+2}\quad\forall R>1,
$$
for some $C>0$, and let $u$ be a positive solution of \eqref{eqcritn}. Then 
$$
u\in L^{\frac{2(n+2)}{n}}(M) \quad \Longleftrightarrow \quad |\partial u|\in L^2(M)\, . 
$$
\end{corollary}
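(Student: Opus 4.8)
The plan is to rerun the same test-function computation as in the previous lemma, but with a cutoff supported on an entire ball rather than on a dyadic annulus, so that both sides are controlled \emph{uniformly} in $R$ and the equivalence is obtained by letting $R\to\infty$. I stress that the corollary does \emph{not} follow by merely setting $\sigma=0$ in the previous lemma: an $O(1)$ bound on each dyadic annulus $B_R\setminus B_{R/2}$ need not sum to a finite integral over all of $M$, since there are infinitely many such annuli. What is needed instead is a single uniform bound on the full balls $B_R$, and this is exactly what the volume growth hypothesis delivers.

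First I would fix a smooth real cutoff $\varphi$ with $\varphi\equiv 1$ on $B_R$, $\varphi\equiv 0$ outside $B_{2R}$, and $|\partial\varphi|\leq c/R$, together with an exponent $s\geq 2n+2$. Multiplying \eqref{eqcritn} by $u\varphi^s$ and integrating by parts yields
$$
2\int_{M}|\partial u|^2\varphi^s + s\int_{M} u\varphi^{s-1}\left(u_\alpha\varphi_{\bar\alpha}+u_{\bar\alpha}\varphi_\alpha\right)=2n^2\int_{M} u^{\frac{2n+2}{n}}\varphi^s\,.
$$
The cross term is then estimated exactly as in the previous lemma, by Young's inequality applied twice: for every $\varepsilon>0$,
$$
s\left|\int_{M} u\varphi^{s-1}\left(u_\alpha\varphi_{\bar\alpha}+u_{\bar\alpha}\varphi_\alpha\right)\right|\leq \varepsilon\int_{M}|\partial u|^2\varphi^s+\varepsilon\int_{M} u^{\frac{2n+2}{n}}\varphi^s+C R^{-2n-2}\,\vol B_{2R}\,.
$$
The crucial point is that, by the volume growth assumption $\vol B_{2R}\leq C(2R)^{2n+2}$, the last term is bounded by a constant independent of $R$. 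This is the single place where the hypothesis enters, and it is precisely what replaces the divergent sum of annular errors.

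For the implication $u\in L^{\frac{2n+2}{n}}(M)\Rightarrow |\partial u|\in L^2(M)$, I would choose $\varepsilon<2$, absorb the gradient term on the left, and obtain $\int_{B_R}|\partial u|^2\leq C\int_{M} u^{\frac{2n+2}{n}}+C$ uniformly in $R$ (here $\int_{B_R}|\partial u|^2=\int_{B_R}|\partial u|^2\varphi^s\leq \int_M|\partial u|^2\varphi^s$ since $\varphi\equiv 1$ on $B_R$); letting $R\to\infty$ and using monotone convergence gives $|\partial u|\in L^2(M)$. For the reverse implication, I would instead keep the $u^{\frac{2n+2}{n}}$ term on the left and choose $\varepsilon<2n^2$, so that
$$
(2n^2-\varepsilon)\int_{B_R} u^{\frac{2n+2}{n}}\leq (2+\varepsilon)\int_{M}|\partial u|^2+C
$$
uniformly in $R$; again letting $R\to\infty$ gives $u\in L^{\frac{2n+2}{n}}(M)$. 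The only genuinely delicate point in the whole argument is recognizing that the annular formulation of the previous lemma is insufficient for a global conclusion and that one must work with a full-ball cutoff in order to keep the boundary error a \emph{single} $O(1)$ constant; once the correct cutoff is in place, both estimates are routine and symmetric.
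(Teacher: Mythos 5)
Your proof is correct and is essentially the argument the paper intends: the corollary is stated without proof as a consequence of the preceding lemma, and the natural derivation is exactly your rerun of that lemma's test-function computation with a cutoff equal to $1$ on all of $B_R$ rather than on an annulus, the volume hypothesis making the error term $C R^{-2n-2}\,\vol B_{2R}$ bounded uniformly in $R$ so that both absorptions go through and $R\to\infty$ gives the equivalence. Your observation that the annular statement with $\sigma=0$ does not formally sum to a global bound is accurate (the lemma's proof leaves an additive $O(1)$ error on each annulus, so the dyadic sum diverges); note only that the exponent $\frac{2(n+2)}{n}$ in the corollary's statement is a typo for $\frac{2(n+1)}{n}=\frac{2n+2}{n}$, which is the exponent you correctly work with throughout.
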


Finally, in the next lemma we show that the decay assumption at infinity on a positive finite energy solution $u$ is satisfied, provided the manifold supports the Folland-Stein-Sobolev inequality. 

\begin{lemma}\label{Moser}
Let $(M^{2n+1}, \theta, J, g)$ be a $(2n+1)$-dimensional complete strictly pseudo-convex orientable CR manifold of hypersurface type supporting the following Folland-Stein-Sobolev inequality 
$$
\left(\int_M f^{\frac{2n+2}{n}}\right)^{\frac{2n}{2n+2}}\leq C \int_M |\partial f|^2 \quad \text{ for all } f\in\mathcal{D}^{1,2}(M)\, .
$$
Then, any positive solution $u\in L^{\frac{2n+2}{2}}(M)$ of \eqref{eqcritn} tends to zero at infinity.
\end{lemma}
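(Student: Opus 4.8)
The plan is to localize the problem and run a Moser iteration driven by the smallness of the local critical energy. Fix a base point and let $r(\cdot)$ denote the Carnot-Caratheodory distance to it; it suffices to show that $\sup_{B_1(x_0)}u\to 0$ as $r(x_0)\to\infty$. To this end I would rewrite \eqref{eqcritn} as the linear equation $-\Delta_b u=Vu$ with potential $V:=2n^2u^{2/n}$. Writing $Q:=2n+2$ for the homogeneous dimension and noting $2^*:=\frac{2n+2}{n}=\frac{2Q}{Q-2}$ together with $\frac{n+2}{n}-1=\frac{2}{n}=2^*-2$, one has $|V|^{Q/2}=(2n^2)^{Q/2}u^{2^*}$, so that $V\in L^{Q/2}(M)$ precisely because $u\in L^{2^*}(M)$. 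The key consequence is the \emph{tail smallness}: by absolute continuity of the integral,
$$
\int_{B_2(x_0)}|V|^{Q/2}=(2n^2)^{Q/2}\int_{B_2(x_0)}u^{2^*}\longrightarrow 0\qquad\text{as }r(x_0)\to\infty .
$$

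The heart of the argument is a local sup-bound with \emph{uniform} constants: I would prove that there exist $\eps_0=\eps_0(n,C)>0$ and $C_0>0$, depending only on $n$ and on the Folland-Stein-Sobolev constant $C$ in the hypothesis, such that
$$
\int_{B_2(x_0)}u^{2^*}\le\eps_0\ \Longrightarrow\ \sup_{B_1(x_0)}u\le C_0\Big(\int_{B_2(x_0)}u^{2^*}\Big)^{1/2^*}.
$$
To establish this I test \eqref{eqcritn} with $\eta^2 u\,u_L^{2(\beta-1)}$, where $u_L:=\min\{u,L\}$, $\beta\ge 1$, and $\eta$ is a cut-off supported in $B_2(x_0)$; an integration by parts (justified by a standard Caccioppoli estimate, which also gives $|\partial u|\in L^2_{\mathrm{loc}}$) together with Young's inequality to absorb the cross terms yields a Caccioppoli-type bound for $w:=u\,u_L^{\beta-1}$. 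Applying the \emph{global} Folland-Stein-Sobolev inequality to $\eta w\in\mathcal D^{1,2}(M)$ — which is exactly the form in which the hypothesis is available — and estimating the critical term $\int\eta^2 Vw^2$ by H\"older produces the factor $\big(\int_{\mathrm{supp}\,\eta}u^{2^*}\big)^{2/Q}$ in front of $\big(\int(\eta w)^{2^*}\big)^{2/2^*}$. When this factor is small enough it is absorbed into the left-hand side, leaving an iteration inequality of reverse-H\"older type $\|u\|_{L^{2^*\beta}(B_{r'})}\lesssim\|u\|_{L^{2\beta}(B_r)}$ with gain exponent $\chi=2^*/2=(n+1)/n>1$; iterating over radii $1\le r'<r\le2$ and letting $L\to\infty$ produces the $L^\infty$ bound.

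The main obstacle — and the reason the naive iteration does not close — is that $V$ lies in the \emph{borderline} space $L^{Q/2}$: the coefficient of the gradient term in the Caccioppoli estimate degenerates like $1/\beta$ as $\beta\to\infty$, so a \emph{fixed} small local energy only lifts the integrability of $u$ to a finite (albeit large) exponent. I would resolve this in the standard two-stage manner (\`a la Trudinger/Brezis-Kato). First, the smallness of $\int_{B_2(x_0)}u^{2^*}$ is used only to run the first finitely many steps, which already place $u$ in $L^{q_0}_{\mathrm{loc}}$ for some fixed $q_0>2^*$ on a slightly smaller ball. Once this is known, $V\in L^{s}_{\mathrm{loc}}$ with $s=\frac{q_0 n}{2}>\frac{Q}{2}$, i.e. the potential is now \emph{subcritical}; the critical term can then be controlled by interpolation and Young's inequality with an arbitrarily small coefficient, independently of $\beta$, so a second Moser iteration closes for all $\beta$ and yields the uniform sup-bound above. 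Since the Sobolev constant is global and the threshold $\eps_0$ is uniform, all iteration constants are independent of $x_0$. Combining this sup-bound with the tail smallness gives $\sup_{B_1(x_0)}u\le C_0\big(\int_{B_2(x_0)}u^{2^*}\big)^{1/2^*}\to 0$ as $r(x_0)\to\infty$, which is the claimed decay at infinity.
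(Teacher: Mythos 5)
Your proposal is correct and follows essentially the same route as the paper, whose proof of Lemma \ref{Moser} is deliberately only a sketch: it invokes exactly this standard Moser iteration with the global Folland--Stein--Sobolev inequality, deferring details to \cite{Serrin}, \cite[Appendix E]{Peral}, \cite[Lemma 2.1]{Vet} and \cite[Proposition 2.2]{MurSoa}. Your write-up supplies precisely those omitted details --- the rewriting as $-\Delta_b u=Vu$ with $V\in L^{Q/2}$, the tail smallness, the uniform $\varepsilon$-regularity sup-bound, and the Brezis--Kato two-stage device to handle the borderline exponent --- all correctly.
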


\begin{proof}
The result can be proved by using a standard argument as in the elliptic setting in $\mathbb{R}^m$, by testing the equation against suitable compactly supported test functions depending on arbitrarily large powers of the solution $u$, and using the resulting integral estimates to perform a Moser iteration argument, which only requires the validity of the Folland-Stein-Sobolev inequality. We omit the details and we refer e.g. to \cite{Serrin}, \cite[Appendix E]{Peral}, \cite[Lemma 2.1]{Vet} and \cite[Proposition 2.2]{MurSoa}.
\end{proof}

\

\section{Integral estimates}\label{int_est}

In all this section, $(M^{2n+1}, \theta, J, g)$ will be a $(2n+1)$-dimensional complete Sasakian manifold, $f$, defined in \eqref{def_f}, satisfies \eqref{eq_f} and $g$, defined in \eqref{eq_g}, satisfies \eqref{eq_g}.

Moreover, given $R>0$, $\eta$ will denote a smooth real cut-off function such that $\eta\equiv 1$ in $B_{R/2}$, $\eta\equiv 0$ in $B_R^c$ and such that $\vert \partial\eta\vert\leq \frac{c}{R}$ in $A_R:=B_R\setminus B_{R/2}$.

For the reader's convenience we collect here some notations that we are going to use in the proofs of several lemmas appearing below. We define  
\begin{align*}
\mathcal{I}_1=& \int_M \mathcal{Q}\Psi^{-\beta}\eta^s \, , \\ 
\mathcal{I}_2=&\int_{A_R}e^{2(n-1)f} \vert g\vert^2\Psi^{-\beta} \eta^{s-2}\, , \\
\mathcal{Y}_0=&\int_{A_R}e^{2nf}\Psi^{-\beta}|\partial f|^2\eta^{s-2} \, ,   \\
\mathcal{Y}_1=& \int_{A_R} e^{2(n-1)f} \Psi^{-\beta} |\partial f|^2 \eta^{s-4}   \\
\hat{\mathcal{Y}}_1=&\int_{A_R} e^{2(n-1-\beta)f}  |\partial f|^2 \eta^{s-4} \\
\mathcal{Y}_3=&\int_{A_R}e^{2(n-1)f} \Psi^{-\beta} |\partial f|^4 \eta^{s-2} \, ,
\end{align*}
where $\beta\geq 0$ is small enough, $s>0$ is large enough, $n\geq 1$ and where $f,g, \mathcal{Q},\Psi$ are given by \eqref{def_f}, \eqref{g}, \eqref{defQ} and \eqref{defPsi}, respectively.

\begin{lemma}\label{lemma_est}
Let $n\geq 1$, $s\geq 4$ and $0\leq \beta <1/12$, then there exists a constant $C>0$ such that for every $R>0$ the following  
\begin{align*}
\int_{M} \mathcal{Q}\Psi^{-\beta}\eta^s\leq C \left( \frac{1}{R^4} \int_{A_R} e^{2(n-1)f} \Psi^{-\beta} |\partial f|^2 \eta^{s-4} + \frac{1}{R^2}\int_{A_R} e^{2(n-1)f} \Psi^{-\beta} |\partial f|^4 \eta^{s-2} \right)\,,
\end{align*}
holds, where $\mathcal{Q}$ and $\Psi$ are given by \eqref{defQ} and \eqref{defPsi} and $\eta$ is a cut-off function as above.

\end{lemma}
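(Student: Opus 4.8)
The plan is to start from the key integral inequality in Lemma~\ref{lemma0}, which already bounds $\mathcal{I}_1 = \int_M \mathcal{Q}\Psi^{-\beta}\eta^s$ by
$$
C\Big(\int_{\mathrm{supp}|\partial\eta|}\mathcal{Q}\Psi^{-\beta}\eta^s\Big)^{1/2}\Big(\int_{\mathrm{supp}|\partial\eta|} e^{2(n-1)f}|g|^2\Psi^{-\beta}|\partial\eta|^2\eta^{s-2}\Big)^{1/2}.
$$
Since $\mathrm{supp}|\partial\eta|\subseteq A_R$ and $|\partial\eta|\le c/R$ there, the second factor is controlled by $\frac{c}{R}\big(\mathcal{I}_2\big)^{1/2}$ in the notation introduced before the lemma, where $\mathcal{I}_2 = \int_{A_R} e^{2(n-1)f}|g|^2\Psi^{-\beta}\eta^{s-2}$. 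The first factor is just the restriction of $\mathcal{I}_1$ to $A_R$, hence bounded by $\mathcal{I}_1^{1/2}$. Thus one immediately gets $\mathcal{I}_1 \le \frac{C}{R^2}\,\mathcal{I}_2$. The entire content of the lemma is therefore to reexpress $\mathcal{I}_2$ in terms of the quantities $\mathcal{Y}_1$ and $\mathcal{Y}_3$ appearing on the right-hand side of the stated inequality.

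The heart of the matter is the bound on $\mathcal{I}_2$. First I would use the explicit formula \eqref{mod_g}, namely $|g|^2 = |\partial f|^4 + e^{4f} + 2|\partial f|^2 e^{2f} + f_0^2$, which shows $|g| \le |\partial f|^2 + e^{2f} + |f_0|$ up to constants. The terms $|\partial f|^4$ and $|\partial f|^2 e^{2f}$ feed directly into $\mathcal{Y}_3 = \int_{A_R} e^{2(n-1)f}\Psi^{-\beta}|\partial f|^4\eta^{s-2}$ and into $\mathcal{Y}_0$-type quantities; the genuinely new work is to absorb the pure $e^{4f}$ and $f_0^2$ contributions, which are not manifestly of gradient type. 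The plan is to handle these by integrating by parts: the equation \eqref{eq_f}, i.e. $-\Delta_b f = 2n|\partial f|^2 + 2ne^{2f}$, lets one trade an undifferentiated $e^{2f}$ for $|\partial f|^2$ plus boundary-type terms carrying an extra $|\partial\eta|$ and hence an extra factor $R^{-1}$, which is precisely what produces the two different powers $R^{-4}$ and $R^{-2}$ in the final estimate. Concretely I expect to multiply a suitable combination of $e^{2(n-1)f}e^{2f}\Psi^{-\beta}\eta^{s-2}$ against $f_{\alpha\bar\alpha}$, integrate by parts, and iterate until every term is expressed through $\mathcal{Y}_1 = \int_{A_R}e^{2(n-1)f}\Psi^{-\beta}|\partial f|^2\eta^{s-4}$ (the source of the $R^{-4}$ term, via two derivatives landing on $\eta$) and $\mathcal{Y}_3$ (the source of the $R^{-2}$ term).

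The term $f_0^2$ requires separate care, since $f_0 = Tf$ is a derivative along the Reeb direction rather than a horizontal derivative and so is not directly comparable to $|\partial f|^2$. Here I would exploit the structure of $g$ in \eqref{g}, where $f_0 = -\mathrm{Im}(g)$, together with the commutation relations and the tensor identities in \eqref{tensg} to rewrite $f_0$-dependent integrands in terms of the horizontal tensors $E_\alpha, D_\alpha, G_\alpha$ that make up $\mathcal{Q}$; integrating $f_{0\alpha}$-type quantities by parts against the horizontal frame and using $f_{0\alpha}=f_{\alpha 0}$ (zero torsion) should convert Reeb derivatives into horizontal ones at the cost of the same $R^{-1}$ factors. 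The main obstacle I anticipate is bookkeeping: ensuring that in every integration by parts the derivative of $\Psi^{-\beta}$ (which by \eqref{p2} is controlled by $e^{-(n+1)f}|g|\sqrt{\mathcal{Q}}$) gets reabsorbed into $\mathcal{I}_1$ using the smallness $\beta<1/12$, exactly as in Lemma~\ref{lemma0}, rather than generating a term that cannot be closed. Once all such terms are shown to be either absorbable into the left-hand side $\mathcal{I}_1$ or bounded by $R^{-4}\mathcal{Y}_1 + R^{-2}\mathcal{Y}_3$, the stated inequality follows by collecting constants.
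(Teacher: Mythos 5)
Your architecture matches the paper's up to and including the first step: Lemma \ref{lemma0} immediately gives $\mathcal{I}_1\leq \frac{C}{R^2}\mathcal{I}_2$ with $\mathcal{I}_2=\int_{A_R}e^{2(n-1)f}|g|^2\Psi^{-\beta}\eta^{s-2}$, and the whole content is indeed to bound $\mathcal{I}_2$ by $\varepsilon R^2\mathcal{I}_1+\frac{C}{R^2}\mathcal{Y}_1+C\mathcal{Y}_3$. But your plan for $\mathcal{I}_2$ has a genuine gap at its hardest point, the $f_0^2$ piece. Having split $|g|^2=|\partial f|^4+e^{4f}+2|\partial f|^2e^{2f}+f_0^2$ via \eqref{mod_g}, you propose to handle $f_0^2$ by ``integrating $f_{0\alpha}$-type quantities by parts'' using the torsion-free commutation $f_{0\alpha}=f_{\alpha 0}$. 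As stated this does not get off the ground: $f_0^2$ contains no horizontal derivative to move, and the commutation rules by themselves supply no divergence structure. The missing identity is the imaginary part of the complex equation \eqref{eq_g}, namely $f_0=-\mathrm{Im}(g)=\frac{1}{n}\,\mathrm{Im}(f_{\alpha\bar\alpha})$, which rewrites $f_0^2$ as $\frac1n f_0\,\mathrm{Im}(f_{\alpha\bar\alpha})$ and only then permits an integration by parts; the derivative falling on $f_0$ is then controlled through $f_{0\bar\alpha}=i\left(G_{\bar\alpha}-\bar g f_{\bar\alpha}\right)$ from \eqref{tensg}, and the estimate closes precisely because the resulting bad-looking term is $|\partial f|^2\,\mathrm{Re}(\bar g)=|\partial f|^4+|\partial f|^2e^{2f}$ --- a \emph{real} part containing no $f_0$. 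Without this mechanism the $f_0^2$ contribution cannot be reduced to $\frac{C}{R^2}\mathcal{Y}_1+C\mathcal{Y}_3+\varepsilon R^2\mathcal{I}_1$, so the central step of your outline is missing its key idea. The paper avoids the decomposition altogether: by \eqref{eq_g}, $|g|^2=-\frac1n\mathrm{Re}(f_{\alpha\bar\alpha}\bar g)$, so a single integration by parts treats all four pieces of $|g|^2$ at once, with $\mathrm{Re}(f_\alpha\bar g_{\bar\alpha})$ controlled via \eqref{gbarra} and \eqref{stimaEFG}, and the $\Psi^{-\beta}$-derivative term absorbed via \eqref{p2} and $\beta<\frac{1}{12}$, exactly as you anticipate.

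A second unresolved point is the $e^{4f}$ piece: after the integrations by parts one is left with $\mathcal{Y}_2=\int_{A_R}e^{2(n+1)f}\Psi^{-\beta}\eta^{s-2}$ carrying a coefficient close to $\frac12$, and this term is \emph{not} on the allowed right-hand side. The paper absorbs it by the observation $\mathcal{I}_2\geq\mathcal{Y}_2$ (since $|g|^2\geq e^{4f}$ by \eqref{mod_g}) together with the splitting $\mathcal{I}_2=\frac13\mathcal{I}_2+\frac23\mathcal{I}_2$, so that $\left(\frac12+\varepsilon\right)\mathcal{Y}_2<\frac23\mathcal{I}_2$ can be cancelled. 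Your alternative --- testing the real equation \eqref{eq_f} against $e^{2nf}\Psi^{-\beta}\eta^{s-2}$ --- can be made to work, but not for the reason you give: the $|\partial f|^2$ terms do not cancel; a copy of $\int_{A_R}e^{2nf}\Psi^{-\beta}|\partial f|^2\eta^{s-2}$ survives and must be pushed into $\varepsilon\mathcal{Y}_2+C\mathcal{Y}_3$ by Young, while the $\Psi^{-\beta}$-derivative term is admissible only after noting $|g|\Psi^{-1}=e^{2f}|g|^{-1}\leq 1$. In short, your proposal reproduces the paper's skeleton (Lemma \ref{lemma0}, integration by parts against the equation, absorption through \eqref{p2} and the smallness of $\beta$), but the two decisive devices --- the divergence structure for the $f_0$-dependent terms coming from the complex equation, and the absorption of the $e^{2(n+1)f}$ term --- are absent, and these are exactly where the proof lives.
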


\begin{proof}

Given $R>0$ and $\eta$ as above, from Lemma \ref{lemma0}, \eqref{eq_g} and integrating by parts we have
\begin{align}\label{eq13}\nonumber
\mathcal{I}_1=\int_{M} \mathcal{Q}\Psi^{-\beta}\eta^s\leq & C \int_{M}e^{2(n-1)f}\vert g\vert^2\Psi^{-\beta} \vert\partial\eta\vert^2 \eta^{s-2} \\ 
\leq& \, \dfrac{C}{R^2} \int_{A_R}e^{2(n-1)f} \vert g\vert^2\Psi^{-\beta} \eta^{s-2}=:\dfrac{C}{R^2}\mathcal{I}_2  \\ \nonumber
=& -\dfrac{C}{R^2} \int_{A_R} e^{2(n-1)f} \mathrm{Re} (f_{\alpha\bar{\alpha}}\bar{g}) \Psi^{-\beta} \eta^{s-2}\\ \nonumber
=&:\dfrac{C}{R^2}\left[\mathcal{J}_1+(s-2)\mathcal{J}_2+2(n-1)\mathcal{J}_3+\beta\mathcal{J}_4 \right]\, ,
\end{align}
where 
\begin{align*}
\mathcal{J}_1:=& \int_{A_R} e^{2(n-1)f} \mathrm{Re} (f_{\alpha}\bar{g}_{\bar{\alpha}}) \Psi^{-\beta} \eta^{s-2} \\
\mathcal{J}_2:=& \int_{A_R} e^{2(n-1)f} \mathrm{Re} (f_{\alpha}\eta_{\bar{\alpha}}\bar{g}) \Psi^{-\beta} \eta^{s-3} \\
\mathcal{J}_3:=& \int_{A_R} e^{2(n-1)f} \mathrm{Re} (\bar{g}) \Psi^{-\beta} |\partial f|^2 \eta^{s-2} \\
\mathcal{J}_4:=& -\int_{A_R} e^{2(n-1)f} \mathrm{Re} (f_\alpha\bar{g}\Psi_{\bar{\alpha}}) \Psi^{-\beta-1} \eta^{s-2}\, . 
\end{align*}
We now estimate each of the previous integrals by using Young's inequality and \eqref{g}. We start with $\mathcal{J}_1$, using \eqref{gbarra} and the definition of $\mathcal{Q}$. Since 
\begin{equation}\label{stimaEFG}
\vert D_{\bar\alpha}+E_{\bar\alpha}-G_{\bar\alpha}\vert =\vert  (D_{\bar\alpha}+G_{\bar\alpha})+(E_{\bar\alpha}-G_{\bar\alpha})-G_{\bar\alpha}\vert\leq \sqrt{3}\sqrt\mathcal{Q} e^{-(n-1)f},
\end{equation}
then
\begin{align*}
\mathrm{Re}(f_\alpha \bar{g}_{\bar{\alpha}})= &\mathrm{Re} \left( f_{\alpha} \left( D_{\bar\alpha}+E_{\bar\alpha}-G_{\bar\alpha} \right)+ 2 \bar{g} |\partial f|^2 \right)  \\
\leq & \sqrt{3} |\partial f| \sqrt{\mathcal{Q}} e^{-(n-1)f} + 2|\partial f|^4 + 2 |\partial f|^2 e^{2f}\\
\leq & \sqrt{3} |\partial f| \sqrt{\mathcal{Q}} e^{-(n-1)f} + 4|\partial f|^4 + \dfrac{1}{2}  e^{4f}\, .
\end{align*}
Hence, 
\begin{align*}
\mathcal{J}_1\leq & \sqrt{3} \int_{A_R} e^{(n-1)f} \sqrt{\mathcal{Q}}|\partial f| \Psi^{-\beta}\eta^{s-2}+ \frac{1}{2} \int_{A_R} e^{2(n+1)f} \Psi^{-\beta}  \eta^{s-2} + 4 \int_{A_R} e^{2(n-1)f} \Psi^{-\beta} |\partial f|^4 \eta^{s-2} \\
\leq & \frac{\varepsilon R^2}{2} \int_{A_R} \mathcal{Q}\Psi^{-\beta}\eta^s + \frac{C}{R^2}\int_{A_R} e^{2(n-1)f} \Psi^{-\beta} |\partial f|^2 \eta^{s-4} + \frac{1}{2} \int_{A_R} e^{2(n+1)f} \Psi^{-\beta}  \eta^{s-2}\\
& + 4 \int_{A_R} e^{2(n-1)f} \Psi^{-\beta} |\partial f|^4 \eta^{s-2} \\
=:& \frac{\varepsilon R^2}{2}\mathcal{I}_1 + \frac{C}{R^2}\mathcal{Y}_1 +\frac{1}{2}\mathcal{Y}_2 +4\mathcal{Y}_3\, , 
\end{align*}
for every $\varepsilon>0$ and some $C=C(\varepsilon)>0$. Similarly we obtain 
\begin{align*}
(s-2)\mathcal{J}_2\leq& \varepsilon\int_{A_R}e^{2(n-1)f} \vert g\vert^2\Psi^{-\beta} \eta^{s-2}+ \frac{C}{R^2} \int_{A_R} e^{2(n-1)f}  \Psi^{-\beta}|\partial f|^2 \eta^{s-4} \\
=& \varepsilon\mathcal{I}_2 + \frac{C}{R^2}\mathcal{Y}_1\, , 
\end{align*}
for every $\varepsilon>0$ and some $C=C(\varepsilon)>0$. Analogously we have 
\begin{align*}
2(n-1)\mathcal{J}_3 =& 2(n-1) \int_{A_R} e^{2(n-1)f} \Psi^{-\beta} |\partial f|^4 \eta^{s-2} +2(n-1) \int_{A_R} e^{2nf} \Psi^{-\beta} |\partial f|^2 \eta^{s-2} \\
\leq & \varepsilon \int_{A_R} e^{2(n+1)f} \Psi^{-\beta}  \eta^{s-2}+ C  \int_{A_R} e^{2(n-1)f} \Psi^{-\beta} |\partial f|^4 \eta^{s-2} \\
=& \varepsilon\mathcal{Y}_2 + C\mathcal{Y}_3  \, ,
\end{align*}
for every $\varepsilon>0$ and some $C=C(\varepsilon)>0$. Moreover we get 
\begin{align*}
\mathcal{J}_4 \leq & 2\int_{A_R} e^{(n-1)f} |g|^2 \sqrt{\mathcal{Q}} |\partial f| \Psi^{-\beta-1} e^{-2f}\eta^{s-2}\\
\leq & \frac{\varepsilon R^2}{2} \int_{M} \mathcal{Q}\Psi^{-\beta}\eta^s + \frac{C}{R^2} \int_{A_R} e^{2(n-1)f}  \Psi^{-\beta}|\partial f|^2 \eta^{s-4}\\
=&\frac{\varepsilon R^2}{2} \mathcal{I}_1+\frac{C}{R^2}\mathcal{Y}_1 \, , 
\end{align*}
for every $\varepsilon>0$ and some $C=C(\varepsilon)>0$, where we used \eqref{p2}. Summing up, 
\begin{align*}
\mathcal{I}_2 \leq \varepsilon R^2\mathcal{I}_1 + \frac{C}{R^2}\mathcal{Y}_1 +\left(\frac{1}{2}+\varepsilon\right)\mathcal{Y}_2 +C\mathcal{Y}_3 + \varepsilon \mathcal{I}_2 \, .
\end{align*}
We now observe that
\begin{align*}
\mathcal{I}_2 = \frac{1}{3}\mathcal{I}_2  + \frac{2}{3}\mathcal{I}_2 \geq \frac{1}{3}\mathcal{I}_2  + \frac{2}{3}\mathcal{Y}_2\, , 
\end{align*}
hence, for every $\varepsilon>0$ sufficiently small we obtain 
\begin{align}\label{dario}
\mathcal{I}_2 \leq \varepsilon C R^2\mathcal{I}_1 + \frac{C}{R^2}\mathcal{Y}_1  +C\mathcal{Y}_3  \, .
\end{align}
Combining \eqref{eq13} and \eqref{dario} we get 
\begin{align*}
\mathcal{I}_1 \leq \dfrac{C}{R^2} \mathcal{I}_2 \leq  \dfrac{C}{R^2}  \left( \varepsilon  R^2\mathcal{I}_1 + \frac{1}{R^2}\mathcal{Y}_1  +\mathcal{Y}_3 \right)
\end{align*}
and the claim follows by choosing $\varepsilon$ sufficiently small.
\end{proof}

\begin{lemma}\label{l-est2}
Let $n=1,2$, $s\geq 4+2n$ and $0< \beta <n$, then there exists a constant $C>0$ such that for every $R>0$ the following holds
\begin{align*}
\int_{A_R} e^{2(n-1)f} \Psi^{-\beta} |\partial f|^2 \eta^{s-4}\leq \frac{C}{R^{2(n-\beta)}} \vol B_R \,,
\end{align*}
where $\Psi$ is given by \eqref{defPsi} and $\eta$ is a cut-off function as above.
\end{lemma}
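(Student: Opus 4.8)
The plan is to eliminate the weight $\Psi^{-\beta}$ by a pointwise bound, reduce the statement to a weighted Caccioppoli inequality for $f$, and then iterate. Since from the definition \eqref{g} of $g$ and the arithmetic-geometric mean inequality
$$
\vert g\vert^2=(\vert\partial f\vert^2+e^{2f})^2+f_0^2\ge 4\vert\partial f\vert^2 e^{2f},
$$
the function $\Psi$ in \eqref{defPsi} satisfies $\Psi=\vert g\vert^2 e^{-2f}\ge 4\vert\partial f\vert^2$, so that $\Psi^{-\beta}\vert\partial f\vert^2\le 4^{-\beta}\vert\partial f\vert^{2-2\beta}$. For $\beta\in(0,1)$ -- which is the whole range when $n=1$ and the range needed in the applications when $n=2$ -- H\"older's inequality with exponents $\frac1{1-\beta}$ and $\frac1\beta$ then gives
$$
\int_{A_R}e^{2(n-1)f}\Psi^{-\beta}\vert\partial f\vert^2\eta^{s-4}\le C\,\mathcal Z^{\,1-\beta}\Big(\int_{A_R}e^{2(n-1)f}\eta^{s-4}\Big)^{\beta},\qquad \mathcal Z:=\int_{A_R}e^{2(n-1)f}\vert\partial f\vert^2\eta^{s-4}.
$$
It therefore suffices to bound $\mathcal Z$ and $\int_{A_R}e^{2(n-1)f}$ separately.

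For $\mathcal Z$ I would prove $\mathcal Z\le \frac{C}{R^2}\int_{A_R}e^{2(n-1)f}$. The input is the equation \eqref{eq_f}, which gives the pointwise inequality $\vert\partial f\vert^2\le -\frac1{2n}\Delta_b f$. Estimating $\mathcal Z$ by the same integral over all of $M$ (the integrand is nonnegative) and integrating by parts, the derivative falling on $e^{2(n-1)f}$ reproduces $\mathcal Z$ with the factor $\frac{n-1}{n}$, while the derivative falling on $\eta^{s-4}$ produces a term supported in $A_R$ of size $\frac{C}{R}\int_{A_R}e^{2(n-1)f}\vert\partial f\vert\,\eta^{s-5}$. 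Since the absorption coefficient $1-\frac{n-1}{n}=\frac1n$ is strictly positive, the reproduced term can be absorbed, and a Cauchy--Schwarz inequality applied to the remaining gradient term yields $\mathcal Z\le \frac{C}{R^2}\int_{A_R}e^{2(n-1)f}\eta^{s-6}$. The hypothesis $s\ge 4+2n$ is exactly what is needed for all powers of $\eta$ occurring here and in the next step to be nonnegative.

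It remains to control $\int_{A_R}e^{2(n-1)f}$, and this is where the restriction $n\le 2$ enters, since by \eqref{def_f} one has $e^{2(n-1)f}=u^{2(n-1)/n}$ with exponent $\le 1$. For $n=1$ the integrand is identically $1$ and $\int_{A_R}e^{2(n-1)f}\le \mathrm{Vol}\,B_R$. For $n=2$, using $e^{2f}\le -\frac1{2n}\Delta_b f$, one integration by parts together with the plain Caccioppoli bound $\int_{A_R}\vert\partial f\vert^2\eta^{s'}\le \frac{C}{R^2}\mathrm{Vol}\,B_R$ (the same argument with the weight $e^{2(n-1)f}$ replaced by $1$) and Cauchy--Schwarz give $\int_{A_R}e^{2f}\le \frac{C}{R^2}\mathrm{Vol}\,B_R$. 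In both cases $\int_{A_R}e^{2(n-1)f}\le \frac{C}{R^{2(n-1)}}\mathrm{Vol}\,B_R$. Inserting $\mathcal Z\le \frac{C}{R^2}\int_{A_R}e^{2(n-1)f}$ and this bound into the H\"older estimate gives
$$
\int_{A_R}e^{2(n-1)f}\Psi^{-\beta}\vert\partial f\vert^2\eta^{s-4}\le \frac{C}{R^{2(1-\beta)}}\int_{A_R}e^{2(n-1)f}\le \frac{C}{R^{2(n-\beta)}}\mathrm{Vol}\,B_R,
$$
which is the assertion.

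The crucial point is the pointwise inequality $\Psi\ge 4\vert\partial f\vert^2$: it replaces the $\Psi$-weight by a pure gradient, so that no a priori gradient bound on $u$ is required and the estimate closes by integration by parts and absorption alone, the power $R^{-2(n-\beta)}$ being precisely what this elementary bound yields after the Caccioppoli iteration. The restriction $n\le 2$ is essential, because only then does the residual weight $e^{2(n-1)f}=u^{2(n-1)/n}$ have exponent at most one and can be folded into the volume; for $n\ge 3$ one must instead invoke the energy-decay hypothesis, which is why those cases are treated separately. The only subcase left open above is $\beta\in[1,n)$, which can occur only for $n=2$; there I would split $\Psi^{-\beta}=\Psi^{-\beta_0}\Psi^{-(\beta-\beta_0)}$ with $\beta_0\in(0,1)$, bound the first factor by $\Psi\ge 4\vert\partial f\vert^2$ and the second by $\Psi\ge e^{2f}$, and control the resulting negative power of $e^{f}$ by the pointwise lower bound $u\ge C\,r^{-3}$ of Corollary \ref{c-lowbou}. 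I expect this last, purely technical, splitting to be the only delicate point.
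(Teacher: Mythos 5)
Your reduction is set up correctly and the final bookkeeping closes (granting your two intermediate claims, H\"older does give $\mathcal Y_1\le C\,\mathcal Z^{1-\beta}\bigl(\int_{A_R}e^{2(n-1)f}\eta^{s-4}\bigr)^{\beta}\le CR^{-2(n-\beta)}\vol B_R$), but the proof has a genuine gap at $n=2$, in the one step carrying all the weight: the claimed Caccioppoli bound $\mathcal Z\le \frac{C}{R^2}\int_{A_R}e^{2(n-1)f}$. You assert that integration by parts reproduces $\mathcal Z$ with factor $\frac{n-1}{n}$, leaving the margin $\frac1n>0$. The correct factor is $\frac{2(n-1)}{n}$: testing $|\partial f|^2+e^{2f}=-\frac{1}{2n}\Delta_b f$ (from \eqref{eq_f}) against $\phi=e^{2(n-1)f}\eta^{s-4}$, and using $\int\phi\,\Delta_b f=-2\int\mathrm{Re}(f_\alpha\phi_{\bar\alpha})$ together with $\bigl(e^{2(n-1)f}\bigr)_{\bar\alpha}=2(n-1)f_{\bar\alpha}e^{2(n-1)f}$, yields
\begin{equation*}
\mathcal Z'+\int_M e^{2nf}\eta^{s-4}=\frac{2(n-1)}{n}\,\mathcal Z'+\frac{s-4}{n}\int_M e^{2(n-1)f}\,\mathrm{Re}(f_\alpha\eta_{\bar\alpha})\,\eta^{s-5},\qquad \mathcal Z':=\int_M e^{2(n-1)f}|\partial f|^2\eta^{s-4},
\end{equation*}
so the absorption margin is $1-\frac{2(n-1)}{n}=\frac{2-n}{n}$, strictly positive only for $n=1$ and \emph{exactly zero} at $n=2$: there $\mathcal Z'$ cancels from both sides, and the identity degenerates into a statement about $\int e^{4f}\eta^{s-4}$ carrying no information on $\mathcal Z$. (This is the same computation as in the paper's Lemma \ref{l-estn}, where for $n\ge3$ the coefficient $\frac{n-2}{n}$ has the opposite sign and is used in the opposite direction; $n=2$ is precisely the degenerate case.) The failure is structural, not a matter of constants: by converting $\Psi^{-\beta}|\partial f|^2\le C|\partial f|^{2-2\beta}$ and H\"oldering, you discard the exponential gain in $\Psi^{-\beta}$, whereas the paper bounds $\Psi^{-\beta}\le e^{-2\beta f}$ (from $|g|\ge e^{2f}$) and runs the weighted Caccioppoli with weight $e^{2(n-1-\beta)f}$; the reproduced coefficient then becomes $\frac{2(n-1-\beta)}{n}$, whose margin $\frac{2(1+\beta)-n}{n}$ equals $\beta>0$ at $n=2$ --- the very $\beta$ your H\"older step throws away \emph{is} the absorption margin. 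The paper also keeps the negative term $-\int_{A_R}e^{2(n-\beta)f}\eta^{s-4}$ from the $e^{2f}$ part of the equation and Youngs the cut-off term against it, producing $|\partial\eta|^{2(n-\beta)}\sim R^{-2(n-\beta)}$ in one stroke rather than assembling the rate from two separate estimates.

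Two further remarks. First, your fallback for $\beta\in[1,2)$ does not work as described: Corollary \ref{c-lowbou} requires the curvature hypotheses of Theorem \ref{t-comp} (for $n=2$, stronger even than those of Theorem \ref{teo3}), which are absent from the lemma's statement; and even granting $u\ge Cr^{-3}$, the extra factor $R^{3(\beta-\beta_0)}$ turns your rate into $R^{-2n+3\beta-\beta_0}$, strictly worse than the claimed $R^{-2(n-\beta)}$ since $\beta_0<1\le\beta$. Second, on the positive side, your $n=1$ argument is correct and genuinely different from the paper's: the pointwise bound $\Psi\ge 4|\partial f|^2$ (AM--GM), H\"older with exponents $\frac1{1-\beta},\frac1\beta$, and the unweighted Caccioppoli estimate $\int_M|\partial f|^2\eta^{s-4}\le CR^{-2}\vol B_R$ (which holds for every $n$) give the stated rate with no curvature input; it even sidesteps the paper's final Young step, which for $n=1$ involves the negative-exponent weight $e^{2(n-1-\beta)f}=e^{-2\beta f}$ and is the delicate point there. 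But as it stands, the case $n=2$ of the lemma is unproven in your proposal.
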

\begin{proof}
Firstly we observe that, being $\beta>0$, by \eqref{defPsi} we have 
\begin{equation}\label{trick}
\Psi^{-\beta}= \vert g\vert^{-2\beta} e^{2\beta f} \leq e^{2\beta f}\left( \vert\partial f\vert^4 + e^{4f}\right)^{-\beta} \leq e^{-2\beta f}
\end{equation}
Then, 
\begin{align}\label{fabio}
\mathcal{Y}_1 = \int_{A_R} e^{2(n-1)f} \Psi^{-\beta} |\partial f|^2 \eta^{s-4} \leq \int_{A_R} e^{2(n-1-\beta)f}  |\partial f|^2 \eta^{s-4}=: \hat{\mathcal{Y}}_1\,. 
\end{align}
We now have from \eqref{g}
\begin{align}\label{clelia}
\hat{\mathcal{Y}}_1 = & \int_{A_R} e^{2(n-1-\beta)f}  \mathrm{Re}(g) \eta^{s-4} - \int_{A_R} e^{2(n-\beta)f}  \eta^{s-4} \\ \nonumber 
= & -\frac{1}{n} \int_{A_R} e^{2(n-1-\beta)f}  \mathrm{Re}(f_{\alpha\bar{\alpha}}) \eta^{s-4} - \int_{A_R} e^{2(n-\beta)f}  \eta^{s-4} \\ \nonumber 
= & \frac{2(n-1-\beta)}{n} \hat{\mathcal{Y}}_1 + \frac{s-4}{n} \int_{A_R} e^{2(n-1-\beta)f}  \mathrm{Re}(f_{\alpha} \eta_{\bar{\alpha}}) \eta^{s-5} - \int_{A_R} e^{2(n-\beta)f}  \eta^{s-4}
\end{align}
thus, being $2(1+\beta)-n>0$ by our assumptions, 
\begin{align*}
\hat{\mathcal{Y}}_1 \leq  & C \left( \dfrac{s-4}{n} \int_{A_R} e^{2(n-1-\beta)f} |\partial f| |\partial\eta| \eta^{s-5} - \int_{A_R} e^{2(n-\beta)f}  \eta^{s-4}\right) \\
\leq & C\left(\varepsilon\hat{\mathcal{Y}}_1 +  \hat{C} \int_{A_R} e^{2(n-1-\beta)f}  \eta^{s-6} |\partial\eta|^2 - \int_{A_R} e^{2(n-\beta)f}  \eta^{s-4} \right) \, ,
\end{align*}
for every $\varepsilon>0$ and some $C,\hat{C}>0$. By choosing $\varepsilon$ sufficiently small we deduce 
\begin{align*}
\hat{\mathcal{Y}}_1 \leq  &C\left( \hat{C} \int_{A_R} e^{2(n-1-\beta)f}  \eta^{s-6} |\partial\eta|^2 - \int_{A_R} e^{2(n-\beta)f}  \eta^{s-4} \right) \\
\leq & C\left( \hat{C}\delta \int_{A_R} e^{2(n-\beta)f}  \eta^{s-4} + \tilde{C}   \int_{A_R}  \eta^{s-4-2n+2\beta} |\partial\eta|^{2(n-\beta)} - \int_{A_R} e^{2(n-\beta)f}  \eta^{s-4} \right)
\end{align*}
for every $\delta>0$ and some $C,\hat{C},\tilde{C}>0$. By choosing $\delta$ sufficiently small we deduce, from \eqref{fabio},
\begin{align}\label{eq-y1h}
\mathcal{Y}_1 \leq \hat{\mathcal{Y}}_1 \leq C\int_{A_R}  \eta^{s-4-2n+2\beta} |\partial\eta|^{2(n-\beta)}\leq  \frac{C}{R^{2(n-\beta)}} \vol B_R
\end{align}
which is the thesis.
\end{proof}

\begin{lemma}\label{lemma33}
Let $(M^{3}, \theta, J, g)$ be a $3$-dimensional ($n=1$) complete Sasakian manifold with nonnegative Tanaka-Webster scalar curvature. Let $s> 8$, $0< \beta <1/27$ and $\eps>0$, then there exists a constant $C>0$ such that for every $R>0$ the following holds
\begin{align*}
\int_{A_R}  \Psi^{-\beta} |\partial f|^4 \eta^{s-2}\leq C \left(\eps R^2 \int_{A_R}\Psi^{-\beta}\mathcal{Q}\eta^s+\frac{1}{R^2}\vol B_R\right)\, , 
\end{align*}
where $\Psi$ is given by \eqref{defPsi} and $\eta$ is a cut-off function as above.
\end{lemma}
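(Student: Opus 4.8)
The plan is to integrate by parts so as to trade the first--order quantity $|\partial f|^4$ for the second--order, curvature--controlled density $\mathcal{Q}$, while keeping the weight $\Psi^{-\beta}$ unchanged, so that the resulting $\int_{A_R}\Psi^{-\beta}\mathcal{Q}\eta^s$ can later be absorbed into $\mathcal{I}_1$. The algebraic starting point comes from \eqref{tensg}: since $gf_\alpha=G_\alpha-if_{0\alpha}$ and $\mathrm{Re}(g)=|\partial f|^2+e^{2f}$, multiplying by $f_{\bar\alpha}$ and taking real parts gives the pointwise identity
\[
|\partial f|^4=\mathrm{Re}\!\left(G_\alpha f_{\bar\alpha}\right)+\mathrm{Im}\!\left(f_{0\alpha}f_{\bar\alpha}\right)-e^{2f}|\partial f|^2 .
\]
Multiplying by $\Psi^{-\beta}\eta^{s-2}$ and integrating, the last term is nonnegative and is discarded, while $\mathrm{Re}\int\Psi^{-\beta}G_\alpha f_{\bar\alpha}\eta^{s-2}$ is estimated directly using $|G_\alpha|\le\sqrt{\mathcal{Q}}$ (recall that $E_{\alpha\bar\beta}\equiv0$ when $n=1$, which simplifies $\mathcal{Q}$) followed by Young's inequality; this produces exactly $\varepsilon R^2\int_{A_R}\Psi^{-\beta}\mathcal{Q}\eta^s$ together with a remainder of the form $R^{-2}\mathcal{Y}_1$.

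The delicate term is the Reeb contribution $\mathrm{Im}\int\Psi^{-\beta}f_{0\alpha}f_{\bar\alpha}\eta^{s-2}$. Here I would integrate by parts in the $Z_\alpha$ direction (equivalently, exploit that the associated $(1,0)$ field is divergence free up to the terms one computes), invoking the commutation rule $f_{\bar\alpha\alpha}=f_{\alpha\bar\alpha}-2if_0=-g-2if_0$ and the vanishing of the torsion. A short computation shows that the ``main'' term so produced is $\int\Psi^{-\beta}f_0^2\eta^{s-2}$, whereas the weight--derivative term is controlled, again with the sharp $\Psi^{-\beta}$ weight, by means of \eqref{p2}, the cancellation $\Psi^{-\beta-1}e^{-2f}=\Psi^{-\beta}|g|^{-2}$ and the elementary bounds $|f_0|\le|g|$ and $|\partial f|^2\le|g|$; after Young's inequality it again contributes $\varepsilon R^2\int_{A_R}\Psi^{-\beta}\mathcal{Q}\eta^s+R^{-2}\mathcal{Y}_1$. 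The cutoff terms carry a factor $|\partial\eta|\le c/R$ and, after Young, split into an absorbable $\mathcal{Q}$--piece and lower order pieces.

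At this stage one is left with a coupled pair of integral inequalities for $\mathcal{Y}_3=\int_{A_R}\Psi^{-\beta}|\partial f|^4\eta^{s-2}$ and for its companion $\int_{A_R}\Psi^{-\beta}f_0^2\eta^{s-2}$; the latter is treated by the same scheme and feeds a multiple of $\mathcal{Y}_3$ back into the estimate. Closing this system by absorption is what forces the smallness $0<\beta<1/27$ (the feedback coefficients are proportional to $\beta$ and must sum below one) and genuinely uses $n=1$. Finally the lower order remainders are disposed of: $\mathcal{Y}_1$ is estimated by Lemma \ref{l-est2}, the leftover exponential weights $e^{-cf}$ are bounded on $A_R$ by $CR^{3c}$ through the lower bound $u\ge C\,r^{-3}$ of Corollary \ref{c-lowbou}, and the resulting geometric integrals are controlled by $\vol B_R\le CR^4$ from Theorem \ref{t-comp}, producing the term $R^{-2}\vol B_R$.

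The main obstacle is precisely this feedback phenomenon: every integration by parts that lowers the order of $|\partial f|^4$ reproduces either $\mathcal{Y}_3$ itself or the companion $f_0^2$--integral, so the estimate is essentially circular and closes only because the reproduced terms come with coefficient strictly below one. Making this work requires carrying the sharp weight $\Psi^{-\beta}$ — rather than the cruder $e^{-2\beta f}$ of \eqref{trick} — through all the Cauchy--Schwarz and Young steps, since only with this weight does $\int_{A_R}\Psi^{-\beta}\mathcal{Q}\eta^s\le\mathcal{I}_1$ permit the final absorption; the weight bookkeeping is therefore the step most prone to failure.
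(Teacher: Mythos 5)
Your opening identity $|\partial f|^4=\mathrm{Re}(G_\alpha f_{\bar\alpha})+\mathrm{Im}(f_{0\alpha}f_{\bar\alpha})-e^{2f}|\partial f|^2$ is algebraically correct, and your treatment of the $G_\alpha$ term is fine, but the argument breaks at the Reeb term. Integrating $\mathrm{Im}\int\Psi^{-\beta}f_{0\alpha}f_{\bar\alpha}\eta^{s-2}$ by parts with $f_{\bar\alpha\alpha}=-g-2if_0$ produces the main term $+\int_{A_R}\Psi^{-\beta}f_0^2\eta^{s-2}$ with coefficient exactly $1$, independent of $\beta$. That integral is as strong as $\mathcal{I}_2$ (since $f_0^2\le|g|^2$), and by \eqref{dario} the only available bound for $\mathcal{I}_2$ feeds back a full multiple $C\mathcal{Y}_3$ with $C\ge 1$; conversely, running "the same scheme" on $\int\Psi^{-\beta}f_0^2\eta^{s-2}$ (writing $f_0=\mathrm{Im}(f_{\alpha\bar\alpha})$ and integrating by parts) returns exactly $-\mathrm{Im}\int\Psi^{-\beta}f_{0\alpha}f_{\bar\alpha}\eta^{s-2}$ with unit coefficient — an exact cycle, not a contraction. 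Your assertion that "the feedback coefficients are proportional to $\beta$ and must sum below one" is unsubstantiated and structurally implausible, since the $f_0^2$ term arises with coefficient $1$ no matter how small $\beta$ is. The paper's proof is designed precisely to avoid isolating $f_0$: it writes $|\partial f|^2=\mathrm{Re}(g)-e^{2f}$ with $\mathrm{Re}(g)=-\mathrm{Re}(f_{\alpha\bar\alpha})$ (for $n=1$), expands $|\partial f|^2_{\bar\alpha}$ via \eqref{tensg} so that only $D_{\bar\alpha}+E_{\bar\alpha}$ (controlled by $\sqrt{\mathcal{Q}}$ through \eqref{eq-estest}) and $\mathrm{Re}(\bar g)$ appear, and the sole coercive term, $2\beta\hat{\mathcal{Y}}_3$ on the left of \eqref{eq-bobo}, is generated by differentiating the weight.

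Your methodological claim about the weight is also backwards, and this matters for where $1/27$ comes from. The paper does \emph{not} carry the sharp weight $\Psi^{-\beta}$ through: it passes immediately to the crude weight $e^{-2\beta f}$ via \eqref{trick} (defining $\hat{\mathcal{Y}}_3$), precisely because $Z_{\bar\alpha}e^{-2\beta f}=-2\beta e^{-2\beta f}f_{\bar\alpha}$ yields the clean main term $2\beta\hat{\mathcal{Y}}_3$ — this is the whole reason $\beta>0$ is needed when $n=1$, where the $\frac{n-2}{n}$ mechanism of Lemma \ref{l-estn} degenerates. The sharp weight is reinstated at a single point, via the identity $e^{2\beta f}=\Psi^{-\beta}|g|^{2\beta}$, exactly where $\sqrt{\mathcal{Q}}$ must be paired with $\Psi^{-\beta/2}$ so that Young's inequality outputs $\eps R^2\int\Psi^{-\beta}\mathcal{Q}\eta^s$. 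The threshold $\beta<1/27$ then has nothing to do with feedback bookkeeping: after two further Young inequalities the residual weight satisfies $e^{-16\beta f}\Psi^{-2\beta}\le e^{-18\beta f}\Psi^{-\beta}\le CR^{54\beta}\Psi^{-\beta}$ on $A_R$, by the superharmonicity lower bound $u^{-1}\le Cr^3$ of Corollary \ref{c-lowbou} (see \eqref{stima_f}), leaving a term $\frac{C}{R^{2-54\beta}}\mathcal{Y}_3$ which is absorbed for $R$ large if and only if $2-54\beta>0$, i.e. $\beta<1/27$. So absorption in the paper works by largeness of $R$, not by smallness of $\beta$-proportional loop coefficients, and without this mechanism your scheme as written does not close.
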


\begin{proof}
By \eqref{defPsi} we have
\begin{align*}
\mathcal{Y}_3=&\int_{A_R}  \Psi^{-\beta} |\partial f|^4 \eta^{s-2}\leq \int_{A_R} e^{-2\beta f}  |\partial f|^4 \eta^{s-2} =:\hat{\mathcal{Y}}_3\\
=&- \int_{A_R} e^{-2\beta f} \mathrm{Re}(f_{\alpha\bar{\alpha}}) |\partial f|^2 \eta^{s-2} - \int_{A_R} e^{2(1-\beta)f}  |\partial f|^2 \eta^{s-2} \\ 
=&  -2\beta \hat{\mathcal{Y}}_3 + (s-2) \int_{A_R} e^{-2\beta f}  \mathrm{Re}(f_\alpha\eta_{\bar{\alpha}}) |\partial f|^2 \eta^{s-3} \\
&+  \int_{A_R} e^{-2\beta f}  \mathrm{Re}\left( f_\alpha  |\partial f|^2_{\bar{\alpha}}\right) \eta^{s-2}  -\int_{A_R} e^{2(1-\beta)f}  |\partial f|^2 \eta^{s-2} \, .
\end{align*}
Now we note that 
\begin{align*}
 \int_{A_R} e^{-2\beta f}  \mathrm{Re}\left( f_\alpha  |\partial f|^2_{\bar{\alpha}}\right) \eta^{s-2} = &  \int_{A_R} e^{-2\beta f}  \mathrm{Re}\left( f_\alpha (D_{\bar{\alpha}}+E_{\bar{\alpha}}) \right) \eta^{s-2} \\
& +  \int_{A_R} e^{-2\beta f} |\partial f|^2\mathrm{Re}\left( \bar{g}  \right) \eta^{s-2}\\
& - 2 \int_{A_R} e^{2(1-\beta)f} |\partial f|^2 \eta^{s-2} \\
=&  \int_{A_R} e^{-2\beta f} \mathrm{Re}\left( f_\alpha (D_{\bar{\alpha}}+E_{\bar{\alpha}}) \right) \eta^{s-2} \\
& +\hat{\mathcal{Y}}_3 - \int_{A_R} e^{2(1-\beta)f}  |\partial f|^2 \eta^{s-2}
\end{align*}
where we used \eqref{tensg}. Using this identity in the previous one we get
\begin{align}\label{eq-bobo}
2\beta \hat{\mathcal{Y}}_3=& (s-2) \int_{A_R} e^{-2\beta f}  \mathrm{Re}(f_\alpha\eta_{\bar{\alpha}}) |\partial f|^2 \eta^{s-3} \\\nonumber
& + \int_{A_R} e^{-2\beta f} \mathrm{Re}\left( f_\alpha (D_{\bar{\alpha}}+E_{\bar{\alpha}}) \right) \eta^{s-2}-2\int_{A_R} e^{2(1-\beta)f}  |\partial f|^2 \eta^{s-2} \, .
\end{align}
Since $n=1$, we follow the proof in \cite[Section 3]{CLMR}; we include the details in order to make the presentation self contained. We have
\begin{align*}
2\beta\hat{\mathcal{Y}}_3=&  (s-2)\int_{A_R} e^{-2\beta f}  \mathrm{Re}(f_\alpha\eta_{\bar{\alpha}}) |\partial f|^2 \eta^{s-3} \\
& + \int_{A_R} e^{-2\beta f} \mathrm{Re}\left( f_\alpha (D_{\bar{\alpha}}+E_{\bar{\alpha}}) \right) \eta^{s-2}-2\int_{A_R} e^{2(1-\beta)f}  |\partial f|^2 \eta^{s-2} \, .
\end{align*}
Using Young and the following inequality (which actually holds for every $n\geq 1$)
\begin{align}\label{eq-estest}
|D_{\bar{\alpha}}+ E_{\bar{\alpha}}|\leq \sqrt{2} \sqrt{\mathcal{Q}} e^{-(n-1)f}
\end{align}
we obtain
\begin{align*}
(2\beta-\eps)\hat{\mathcal{Y}}_3\leq&  \frac{C}{R^2}\int_{A_R} e^{-2\beta f}  |\partial f|^2 \eta^{s-4} +\sqrt{2} \int_{A_R}e^{-2\beta f}\sqrt{\mathcal{Q}}|\partial f|\eta^{s-2}-2\int_{A_R} e^{2(1-\beta)f}  |\partial f|^2 \eta^{s-2} \\
= &  \frac{C}{R^2}\int_{A_R} e^{-2\beta f}  |\partial f|^2 \eta^{s-4}+\sqrt{2} \int_{A_R}e^{-4\beta f}\Psi^{-\beta}|g|^{2\beta}\sqrt{\mathcal{Q}}|\partial f|\eta^{s-2}\\
&-2\int_{A_R} e^{2(1-\beta)f}  |\partial f|^2 \eta^{s-2} \\
\leq &  \frac{C}{R^2}\int_{A_R} e^{-2\beta f}  |\partial f|^2 \eta^{s-4}+\eps R^2 \int_{A_R}\Psi^{-\beta}\mathcal{Q}\eta^s+\frac{C}{R^2}\int_{A_R}e^{-8\beta f}\Psi^{-\beta}|g|^{4\beta}|\partial f|^2 \eta^{s-4}\,.
\end{align*}
We now use the following Young's inequality
$$
e^{-8\beta f} g\vert^{4\beta} \vert\partial f\vert^2 \vert  \eta^{s-4} \leq 2\beta \vert g\vert^2 \eta^{s-2} + (1-2\beta) e^{-\frac{8\beta f}{1-2\beta}}\vert\partial f\vert^{\frac{2}{1-2\beta}}\eta^{s-\frac{4-4\beta}{1-2\beta}}
$$
to get
\begin{align*}
(2\beta-\eps)\hat{\mathcal{Y}}_3 
\leq &  \frac{C}{R^2}\int_{A_R} e^{-2\beta f}  |\partial f|^2 \eta^{s-4}+\eps R^2 \int_{A_R}\Psi^{-\beta}\mathcal{Q}\eta^s\\
&+\frac{C}{R^2}\int_{A_R}e^{-\frac{8\beta}{1-2\beta} f}\Psi^{-\beta}|\partial f|^{\frac{2}{1-2\beta}}\eta^{s-\frac{4-4\beta}{1-2\beta}}+\frac{C}{R^2}\int_{A_R}\Psi^{-\beta}|g|^2\eta^{s-2}\,.
\end{align*}
Using the following Young's inequality
$$
e^{-\frac{8\beta f}{1-2\beta}}\Psi^{-\beta}\vert\partial f\vert^{\frac{2}{1-2\beta}}\eta^{s-\frac{4-4\beta}{1-2\beta}}\leq \frac{1}{2(1-2\beta)}\vert\partial f\vert^4e^{-16\beta f}\Psi^{-2\beta}\eta^{s-2} + \frac{1-4\beta}{2-4\beta}\eta^{s-\frac{6-8\beta}{1-4\beta}}\,
$$
together with \eqref{trick} and 
\begin{equation}\label{stima_f}
e^{-f}=u^{-1}\leq C R^3\quad\text{in }A_R\,,
\end{equation}
which follows immediately from Corollary \ref{c-lowbou}, we obtain
$$
e^{-\frac{8\beta f}{1-2\beta}}\Psi^{-\beta}\vert\partial f\vert^{\frac{2}{1-2\beta}}\eta^{s-\frac{4-4\beta}{1-2\beta}}\leq C \left(R^{54\beta}\vert\partial f\vert^4\Psi^{-\beta}\eta^{s-2} + 1\right)\,.
$$
Hence, from \eqref{eq-y1h} and \eqref{dario}, we get
\begin{align*}
(2\beta-\eps)\mathcal{Y}_3\leq & (2\beta-\eps)\hat{\mathcal{Y}}_3 \\
\leq &  \frac{C}{R^2}\int_{A_R} e^{-2\beta f}  |\partial f|^2 \eta^{s-4}+\eps R^2 \int_{A_R}\Psi^{-\beta}\mathcal{Q}\eta^s\\
&+\frac{C}{R^{2-54\beta}}\int_{A_R}\Psi^{-\beta}|\partial f|^{4}\eta^{s-2}+\frac{C}{R^2}\vol B_R+\frac{C}{R^{2}}\int_{A_R}\Psi^{-\beta}|g|^2\eta^{s-2}\\
= &  \frac{C}{R^2}\hat{\mathcal{Y}}_1+\eps R^2\mathcal{I}_1+\frac{C}{R^{2-54\beta}}\mathcal{Y}_3+\frac{C}{R^2}\vol B_R+\frac{C}{R^{2}}\mathcal{I}_2\\
\leq & \eps R^2 \mathcal{I}_1+\frac{C}{R^{2-54\beta}}\mathcal{Y}_3+\frac{C}{R^2}\vol B_R+\frac{C}{R^2}\left(\eps R^2 \mathcal{I}_1+\frac{C}{R^2}\mathcal{Y}_1+C\mathcal{Y}_3\right)\\
\leq &\eps R^2 \mathcal{I}_1+\frac{C}{R^{2-54\beta}}\mathcal{Y}_3+\frac{C}{R^2}\vol B_R\,,
\end{align*}
where $\mathcal{I}_2$ is defined in \eqref{eq13} and where we used Lemma \ref{l-est2}. Since $0<\beta<1/27$, choosing $\eps$ small enough and $R$ large enough we obtain the thesis.
\end{proof}

\begin{lemma}\label{l-est22}
Let $n=2$, $s> 4$, $0< \beta <1$ and $\eps>0$, then there exists a constant $C>0$ such that for every $R>0$ the following hold
\begin{align*}
\int_{A_R}e^{4f}\Psi^{-\beta}|\partial f|^2\eta^{s-2}\leq C\left( \int_{A_R}e^{(6-2\beta)f}\eta^{s-2} + \frac{1}{R^2}\int_{A_R}e^{(4-2\beta)f}\eta^{s-4} \right)
\end{align*}
and
\begin{align*}
\int_{A_R}  e^{2f}\Psi^{-\beta} |\partial f|^4 \eta^{s-2}\leq  C\left( \int_{A_R}e^{4f}\Psi^{-\beta}|\partial f|^2\eta^{s-2}+ \frac{1}{R^2}\int_{A_R} e^{2f}\Psi^{-\beta}|\partial f|^2 \eta^{s-4}+\varepsilon R^2\int_{M} \Psi^{-\beta}\mathcal{Q}\eta^s  \right)\, , 
\end{align*}
where $\Psi$ is given by \eqref{defPsi} and $\eta$ is a cut-off function as above.
\end{lemma}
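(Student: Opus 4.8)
The plan is to establish both inequalities by the same integration-by-parts template already used in Lemma~\ref{l-est2} and Lemma~\ref{lemma33}: I exploit the identities \eqref{g}, \eqref{eq_g} and \eqref{tensg} to trade fourth-order gradient quantities for the sub-Laplacian $f_{\alpha\bar\alpha}=-2g$, and then control the error terms by the structural bounds \eqref{p2}, \eqref{eq-estest} and \eqref{trick}. Throughout, the key soft fact is that, since $\eta$ is compactly supported and $f$, $g$, $\Psi^{-\beta}$ are smooth on $B_R$, every integral that appears is finite, so absorptions into the left-hand side are legitimate. The specialization $n=2$ is what makes the exponents line up: the weight $e^{2(n-1)f}=e^{2f}$ combines with $e^{-(n-1)f}=e^{-f}$ in \eqref{eq-estest} and $e^{-(n+1)f}=e^{-3f}$ in \eqref{p2} so that the two genuinely dangerous error terms collapse to a single common form.

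For the first inequality I would apply \eqref{trick} to replace $\Psi^{-\beta}$ by $e^{-2\beta f}$, reducing $\mathcal{Y}_0$ to $\hat{\mathcal{Y}}_0:=\int_{A_R}e^{(4-2\beta)f}|\partial f|^2\eta^{s-2}$. Writing $|\partial f|^2=\mathrm{Re}(g)-e^{2f}$ from \eqref{g} and then $\mathrm{Re}(g)=-\tfrac12\mathrm{Re}(f_{\alpha\bar\alpha})$ from \eqref{eq_g}, an integration by parts moves $Z_{\bar\alpha}$ onto the smooth weight $e^{(4-2\beta)f}\eta^{s-2}$. Differentiating the exponential reproduces $\hat{\mathcal{Y}}_0$ with coefficient $\tfrac{4-2\beta}{2}=2-\beta$, so that the net coefficient of $\hat{\mathcal{Y}}_0$ is $1-(2-\beta)=\beta-1$; since $0<\beta<1$ the factor $1-\beta$ is strictly positive, which is exactly why the range $0<\beta<1$ is imposed. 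The remaining cut-off term $\int e^{(4-2\beta)f}\mathrm{Re}(f_\alpha\eta_{\bar\alpha})\eta^{s-3}$ is handled by Young's inequality, producing a small multiple of $\hat{\mathcal{Y}}_0$ (absorbed) plus the lower-order term $\tfrac{1}{R^2}\int e^{(4-2\beta)f}\eta^{s-4}$ via $|\partial\eta|\le c/R$; together with the leftover $\int e^{(6-2\beta)f}\eta^{s-2}$ this gives the claim, using $\mathcal{Y}_0\le\hat{\mathcal{Y}}_0$.

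For the second inequality I would \emph{keep} $\Psi^{-\beta}$ in place and peel off one factor $|\partial f|^2=\mathrm{Re}(g)-e^{2f}$, so that $\mathcal{Y}_3=I-\mathcal{Y}_0$ with $I=\int_{A_R}e^{2f}\Psi^{-\beta}|\partial f|^2\mathrm{Re}(g)\eta^{s-2}$. Replacing $\mathrm{Re}(g)=-\tfrac12\mathrm{Re}(f_{\alpha\bar\alpha})$ and integrating by parts against the real weight $\psi=e^{2f}\Psi^{-\beta}|\partial f|^2\eta^{s-2}$, I differentiate each factor: the exponential and the expansion $|\partial f|^2_{\bar\alpha}=D_{\bar\alpha}+E_{\bar\alpha}+\bar g f_{\bar\alpha}-2f_{\bar\alpha}e^{2f}$ from \eqref{tensg} both regenerate copies of $\mathcal{Y}_3$ (and a copy of $\mathcal{Y}_0$), the $\Psi^{-\beta}$-derivative contributes a term $B$ involving $\Psi_{\bar\alpha}$, the $D_{\bar\alpha}+E_{\bar\alpha}$ piece contributes a term $C'$, and the cut-off contributes a term $D'$. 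Collecting the self-terms yields the clean identity $\mathcal{Y}_3=3\mathcal{Y}_0-(B+C'+D')$. I then bound $B$ by \eqref{p2}, simplifying with $\Psi=|g|^2e^{-2f}$ and $|g|\ge|\partial f|^2$ (from \eqref{mod_g}) to cancel the resulting $|g|^{-1}$; this reduces $B$ to the same shape $\int e^{f}|\partial f|\sqrt{\mathcal Q}\,\Psi^{-\beta}\eta^{s-2}$ as $C'$, the latter bounded via \eqref{eq-estest}. A single Young split of this common form produces $\varepsilon R^2\int_M\Psi^{-\beta}\mathcal{Q}\eta^s$ together with $\tfrac{1}{R^2}\int e^{2f}\Psi^{-\beta}|\partial f|^2\eta^{s-4}$. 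Finally $D'$ is split by Young into a small multiple of $\mathcal{Y}_3$ plus $\tfrac{1}{R^2}\int e^{2f}\Psi^{-\beta}|\partial f|^2\eta^{s-4}$; absorbing all the $\mathcal{Y}_3$ contributions gives the stated bound, with $\mathcal{Y}_0$ itself estimated by the first inequality.

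I expect the main obstacle to be the bookkeeping in the second inequality. Keeping $\Psi^{-\beta}$ rather than invoking \eqref{trick} forces one to track the $\Psi_{\bar\alpha}$ term $B$ carefully and to verify, via $|g|\ge|\partial f|^2$, that it does not carry a negative power of $|g|$ that would spoil the estimate; one must also confirm that after collecting every regenerated $\mathcal{Y}_3$-term the net coefficient is the harmless value yielding $\mathcal{Y}_3=3\mathcal{Y}_0-(B+C'+D')$, so that absorption is possible. Arranging the precise exponents so that $B$ and $C'$ reduce to one and the same Young-ready expression is where the value $n=2$ is genuinely used.
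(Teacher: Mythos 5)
Your proposal is correct and follows essentially the same route as the paper: the same reduction of $\mathcal{Y}_0$ to $\hat{\mathcal{Y}}_0$ via \eqref{trick} with the decisive positive coefficient $1-\beta$, and for $\mathcal{Y}_3$ the same peeling of $|\partial f|^2=\mathrm{Re}(g)-e^{2f}$, integration by parts, and the identity with net coefficient $\tfrac12\mathcal{Y}_3$ against $\tfrac32\mathcal{Y}_0$ (your $\mathcal{Y}_3=3\mathcal{Y}_0-(B+C'+D')$), with $C'$ controlled by \eqref{eq-estest} and $D'$ by Young. The only cosmetic difference is in the $\Psi_{\bar\alpha}$ term: you apply the pointwise simplification $|g|\geq|\partial f|^2$ before Young's inequality, while the paper applies Young first and then the equivalent bound $\Psi^{-2}|\partial f|^4|g|^2\leq e^{4f}$ -- the resulting terms are identical.
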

\begin{proof} We have
\begin{align*}
\mathcal{Y}_0=&\int_{A_R}e^{4f}\Psi^{-\beta}|\partial f|^2\eta^{s-2} \leq \int_{A_R}e^{(4-2\beta)f}|\partial f|^2\eta^{s-2}=:\hat{\mathcal{Y}}_0 \\
=& \int_{A_R} e^{(4-2\beta)f}  \mathrm{Re}(g) \eta^{s-2} - \int_{A_R} e^{(6-2\beta)f}  \eta^{s-2} \\
= & -\frac{1}{2} \int_{A_R} e^{(4-2\beta)f}  \mathrm{Re}(f_{\alpha\bar{\alpha}}) \eta^{s-s} - \int_{A_R} e^{(6-2\beta)f}  \eta^{s-2} \\
= & (2-\beta) \hat{\mathcal{Y}}_0 + \frac{s-2}{2} \int_{A_R} e^{(4-2\beta)f} \mathrm{Re}( f_{\alpha} \eta_{\bar{\alpha}}) \eta^{s-3} - \int_{A_R} e^{(6-2\beta)f}  \eta^{s-2}\, .
\end{align*}
Hence, 
\begin{align*}
(1-\beta)\hat{\mathcal{Y}}_0=& \int_{A_R} e^{(6-2\beta)f}  \eta^{s-2}-\frac{s-2}{2} \int_{A_R} e^{(4-2\beta)f}  \mathrm{Re}(f_{\alpha} \eta_{\bar{\alpha}}) \eta^{s-3} \\
\leq & \eps\hat{\mathcal{Y}}_0 + \frac{C}{R^2} \int_{A_R} e^{(4-2\beta)f}\eta^{s-4} + \int_{A_R} e^{(6-2\beta)f}  \eta^{s-2} \, ,
\end{align*}
for every $\varepsilon>0$ and some $C>0$. By choosing $\varepsilon$ small enough we have 
\begin{align*}
\mathcal{Y}_0 \leq \hat{\mathcal{Y}}_0\leq  C\left( \int_{A_R} e^{(6-2\beta)f}  \eta^{s-2} + \frac{1}{R^2}\int_{A_R} e^{(4-2\beta)f} \eta^{s-4}\, ,  \right)
\end{align*}
and the first estimate follows. Concerning the second one, we have
\begin{align*}
\mathcal{Y}_3=&\int_{A_R} e^{2f} \Psi^{-\beta} |\partial f|^4 \eta^{s-2}\\
=&-\frac{1}{2} \int_{A_R} e^{2f}\Psi^{-\beta} \mathrm{Re}(f_{\alpha\bar{\alpha}}) |\partial f|^2 \eta^{s-2} - \int_{A_R} e^{4f}\Psi^{-\beta}  |\partial f|^2 \eta^{s-2} \\ 
=&  \mathcal{Y}_3+\frac{s-2}{2}\int_{A_R}e^{2f}\Psi^{-\beta}|\partial f|^2 \mathrm{Re}(\eta_{\bar\alpha}f_{\alpha})\eta^{s-3}+\frac12 \int_{A_R}e^{2f}\Psi^{-\beta}\mathrm{Re}(f_{\alpha} |\partial f|^2_{\bar\alpha})\eta^{s-2}\\
&- \frac\beta 2 \int_{A_R}e^{2f}\Psi^{-\beta-1}|\partial f|^2\mathrm{Re}(f_{\alpha} \Psi_{\bar\alpha})\eta^{s-2} -\int_{A_R} e^{4f}\Psi^{-\beta}  |\partial f|^2 \eta^{s-2}\, ,
\end{align*}
i.e.
\begin{align*}
0=& \frac{s-2}{2}\int_{A_R}e^{2f}\Psi^{-\beta}|\partial f|^2 \mathrm{Re}(\eta_{\bar\alpha}f_{\alpha})\eta^{s-3}+\frac12 \int_{A_R}e^{2f}\Psi^{-\beta}\mathrm{Re}(f_{\alpha} |\partial f|^2_{\bar\alpha})\eta^{s-2}\\
&- \frac\beta 2 \int_{A_R}e^{2f}\Psi^{-\beta-1}|\partial f|^2\mathrm{Re}(f_{\alpha} \Psi_{\bar\alpha})\eta^{s-2} -\int_{A_R} e^{4f}\Psi^{-\beta}  |\partial f|^2 \eta^{s-2}\\
=& \frac{s-2}{2}\int_{A_R}e^{2f}\Psi^{-\beta}|\partial f|^2 \mathrm{Re}(\eta_{\bar\alpha}f_{\alpha})\eta^{s-3}+\frac12 \int_{A_R}e^{2f}\Psi^{-\beta}\mathrm{Re}(f_{\alpha}(D_{\bar\alpha}+E_{\bar\alpha}))\eta^{s-2}\\
&+\frac12 \int_{A_R}e^{2f}\Psi^{-\beta}\mathrm{Re}(\bar g)|\partial f|^2\eta^{s-2}\\
&- \frac\beta 2 \int_{A_R}e^{2f}\Psi^{-\beta-1}|\partial f|^2\mathrm{Re}(f_{\alpha} \Psi_{\bar\alpha})\eta^{s-2} -2\int_{A_R} e^{4f}\Psi^{-\beta}  |\partial f|^2 \eta^{s-2}\\
=&\frac{s-2}{2}\int_{A_R}e^{2f}\Psi^{-\beta}|\partial f|^2 \mathrm{Re}(\eta_{\bar\alpha}f_{\alpha})\eta^{s-3}+\frac12 \int_{A_R}e^{2f}\Psi^{-\beta}\mathrm{Re}(f_{\alpha}(D_{\bar\alpha}+E_{\bar\alpha}))\eta^{s-2}\\
&+\frac12\mathcal{Y}_3- \frac\beta 2 \int_{A_R}e^{2f}\Psi^{-\beta-1}|\partial f|^2\mathrm{Re}(f_{\alpha} \Psi_{\bar\alpha})\eta^{s-2} -\frac32\int_{A_R} e^{4f}\Psi^{-\beta}  |\partial f|^2 \eta^{s-2}\,.
\end{align*}
Therefore, using Young's inequality, \eqref{eq-estest} and \eqref{p2}, we obtain
\begin{align*}
\frac12\mathcal{Y}_3\leq &
\eps \mathcal{Y}_3+\frac{C}{R^2}\int_{A_R}e^{2f}\Psi^{-\beta}|\partial f|^2\eta^{s-4}+\eps R^2\int_{A_R}\Psi^{-\beta}\mathcal{Q}\eta^s \\
&+ \frac{C}{R^2} \int_{A_R}e^{-2f}\Psi^{-\beta-2}|\partial f|^6|g|^2\eta^{s-4} +\frac32\int_{A_R} e^{4f}\Psi^{-\beta}  |\partial f|^2 \eta^{s-2}\\
&\leq \eps \mathcal{Y}_3+\frac{C}{R^2}\int_{A_R}e^{2f}\Psi^{-\beta}|\partial f|^2\eta^{s-4}+\eps R^2\int_{A_R}\Psi^{-\beta}\mathcal{Q}\eta^s +\frac32\int_{A_R} e^{4f}\Psi^{-\beta}  |\partial f|^2 \eta^{s-2}\,,
\end{align*}
since $\Psi^{-2}|\partial f|^4 |g|^2 \leq e^{4f}$. Hence
\begin{align*}
\mathcal{Y}_3\leq &\frac{C}{R^2}\mathcal{Y}_1+\eps R^2\mathcal{I}_1+C\mathcal{Y}_0\,
\end{align*}
and the conclusion follows.
\end{proof}

\begin{lemma}\label{l-estn}
Let $n\geq 3$ and $s\geq 4+2n$ then, for every $\varepsilon>0$ small enough and for every $0\leq\beta<\frac{1}{12}$, there exists a constant $C>0$ such that for every $R>0$ the following 
\begin{align*}
\int_{A_R} e^{2(n-1)f}|\partial f|^2 \eta^{s-4}\leq C\left( \int_{A_R} e^{2nf} \eta^{s-4} + \frac{1}{R^{2n}}\vol B_R \right) \, ,
\end{align*}
\begin{align*}
\int_{A_R}e^{2nf}|\partial f|^2\eta^{s-2}\leq C\left( \int_{A_R}e^{2(n+1)f}\eta^{s-2} + \frac{1}{R^2}\int_{A_R}e^{2nf}\eta^{s-4} \right)
\end{align*}
and 
\begin{align*}
\int_{A_R} e^{2(n-1)f}|\partial f|^4 \eta^{s-2}\leq C\left( \int_{A_R}e^{2nf}|\partial f|^2\eta^{s-2}+ \frac{1}{R^2}\int_{A_R} e^{2(n-1)f}|\partial f|^2 \eta^{s-4}+\varepsilon R^2\int_{M} \mathcal{Q}\eta^s  \right)\,, 
\end{align*}
hold, where $\eta$ is a cut-off function as above. 

\end{lemma}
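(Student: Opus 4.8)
The plan is to establish all three inequalities by the single integration-by-parts-and-reabsorption scheme already used for $n=1,2$ in Lemmas \ref{l-est2} and \ref{l-est22}, specialized to $\beta=0$; indeed, the three left-hand sides are exactly $\mathcal{Y}_1,\mathcal{Y}_0,\mathcal{Y}_3$ evaluated at $\beta=0$, so the weight $\Psi^{-\beta}$ does not intervene. For each inequality I would start by trading one factor $|\partial f|^2$ for $\mathrm{Re}(g)-e^{2f}$ by means of \eqref{g}, then replace $\mathrm{Re}(g)=-\tfrac1n\mathrm{Re}(f_{\alpha\bar\alpha})$ using \eqref{eq_g}, and integrate by parts letting $Z_{\bar\alpha}$ fall on the remaining factors. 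In each case a multiple of the target integral reappears on the right, turning the identity into an algebraic relation that can be solved for the target.

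For the first inequality, with $\mathcal{Y}_1=\int_{A_R}e^{2(n-1)f}|\partial f|^2\eta^{s-4}$, the procedure reproduces $\tfrac{2(n-1)}{n}\mathcal{Y}_1$, so after moving it across the coefficient multiplying $\mathcal{Y}_1$ is $\tfrac{2-n}{n}$. The decisive structural feature for $n\ge3$ is that this number is \emph{negative}: solving for $\mathcal{Y}_1$ therefore brings $\int_{A_R}e^{2nf}\eta^{s-4}$ to the right with a favorable sign, while the gradient cross-term $\int_{A_R}e^{2(n-1)f}\mathrm{Re}(f_\alpha\eta_{\bar\alpha})\eta^{s-5}$ is handled by Young's inequality. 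A final Young splitting of the shape $e^{2(n-1)f}|\partial\eta|^2\le \delta\,e^{2nf}+C|\partial\eta|^{2n}$ eliminates the exponential and, using $|\partial\eta|\le c/R$ and $s\ge 4+2n$, delivers the term $\tfrac{1}{R^{2n}}\vol B_R$. The second inequality is identical in spirit with $\mathcal{Y}_0=\int_{A_R}e^{2nf}|\partial f|^2\eta^{s-2}$: here the reappearing multiple is exactly $2\mathcal{Y}_0$, the coefficient on the left becomes $-1$, and isolating $\mathcal{Y}_0$ places $\int e^{2(n+1)f}\eta^{s-2}$ on the right and, via Young on the cross-term, the error $\tfrac1{R^2}\int e^{2nf}\eta^{s-4}$. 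I stress that, unlike the case $n=1$ of Lemma \ref{lemma33}, no pointwise lower bound on $u$ (hence no sub-Laplacian comparison) is needed here, precisely because the sign sends the exponential terms to the right-hand side.

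For the third inequality I would set $\mathcal{Y}_3=\int_{A_R}e^{2(n-1)f}|\partial f|^4\eta^{s-2}$, split $|\partial f|^4=|\partial f|^2\cdot|\partial f|^2$ and transform one factor as above. The integration by parts then yields three pieces: the derivative of $e^{2(n-1)f}$ gives $\tfrac{2(n-1)}{n}\mathcal{Y}_3$; the derivative $|\partial f|^2_{\bar\alpha}$, expanded through \eqref{tensg} as $D_{\bar\alpha}+E_{\bar\alpha}+\bar g f_{\bar\alpha}-2f_{\bar\alpha}e^{2f}$, splits so that the $\bar g f_{\bar\alpha}$ and $-2f_{\bar\alpha}e^{2f}$ parts recombine (via $\mathrm{Re}\,\bar g=|\partial f|^2+e^{2f}$) into $\tfrac1n\mathcal{Y}_3-\tfrac1n\mathcal{Y}_0$, while the $D_{\bar\alpha}+E_{\bar\alpha}$ part I would bound by \eqref{eq-estest} and Young to produce $\varepsilon R^2\int_M\mathcal{Q}\eta^s+\tfrac{C}{R^2}\int_{A_R}e^{2(n-1)f}|\partial f|^2\eta^{s-4}$; finally the $\eta$-derivative gives a cross-term absorbed by Young into $\varepsilon\mathcal{Y}_3$ and another $\tfrac{C}{R^2}\int_{A_R}e^{2(n-1)f}|\partial f|^2\eta^{s-4}$. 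Summing the copies of $\mathcal{Y}_3$ gives the factor $\tfrac{2n-1}{n}$, so the coefficient of $\mathcal{Y}_3$ after moving everything left is $\tfrac{1-n}{n}$; being negative for $n\ge2$, solving for $\mathcal{Y}_3$ brings $\mathcal{Y}_0=\int e^{2nf}|\partial f|^2\eta^{s-2}$ to the right with the positive coefficient $\tfrac{n+1}{n-1}$, and after absorbing the small multiples of $\mathcal{Y}_3$ the claim follows.

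The hard part will be the sign and coefficient bookkeeping in the third inequality, where distinct copies of $\mathcal{Y}_3$ (from the exponential derivative and from the $\bar g$-recombination) must combine to the strictly negative coefficient $\tfrac{1-n}{n}$ needed to close the estimate; this is exactly where $n\ge3$ (in fact $n\ge2$) is used, in contrast to Lemma \ref{lemma33}. I would also be careful that replacing $\int_{A_R}\mathcal{Q}\eta^s$ by $\int_M\mathcal{Q}\eta^s$ uses $\mathcal{Q}\ge0$, i.e.\ the nonnegativity of the curvature term $R_{\alpha\bar\beta}f_\alpha f_{\bar\beta}$ in \eqref{defQ}, which also underlies the pointwise bound \eqref{eq-estest}.
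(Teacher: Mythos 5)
Your proposal is correct and follows essentially the same route as the paper's proof: the same substitution $|\partial f|^2=\mathrm{Re}(g)-e^{2f}$ combined with $f_{\alpha\bar\alpha}=-ng$, integration by parts, and reabsorption, with exactly the coefficient bookkeeping the paper uses (net coefficients $\tfrac{n-2}{n}$, $-1$, and $\tfrac{n-1}{n}$ for the three estimates, the last arising from the recombination $\tfrac1n\mathcal{Y}_3-\tfrac1n\mathcal{Y}_0$ via \eqref{tensg} and the bound \eqref{eq-estest}), together with the same Young splittings (in particular $e^{2(n-1)f}|\partial\eta|^2\le\delta e^{2nf}+C|\partial\eta|^{2n}$ with $s\ge 4+2n$ giving the $R^{-2n}\vol B_R$ term). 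Your side remarks — that $n\ge3$ enters only through the sign of $\tfrac{n-2}{n}$ in the first estimate, that no sub-Laplacian comparison is needed here, and that $\mathcal{Q}\ge0$ (hence the curvature assumption) underlies both \eqref{eq-estest} and enlarging $\int_{A_R}$ to $\int_M$ — all match the paper.
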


\begin{proof} We firstly observe that, as in \eqref{clelia} with $\beta=0$, 
\begin{align*}
\mathcal{Y}_1=\int_{A_R} e^{2(n-1)f}|\partial f|^2 \eta^{s-4}= & \int_{A_R} e^{2(n-1)f}  \mathrm{Re}(g) \eta^{s-4} - \int_{A_R} e^{2nf}  \eta^{s-4} \\
= & -\frac{1}{n} \int_{A_R} e^{2(n-1)f}  \mathrm{Re}(f_{\alpha\bar{\alpha}}) \eta^{s-4} - \int_{A_R} e^{2nf}  \eta^{s-4} \\
= & \frac{2(n-1)}{n} \mathcal{Y}_1 + \frac{s-4}{n} \int_{A_R} e^{2(n-1)f}  \mathrm{Re}(f_{\alpha} \eta_{\bar{\alpha}}) \eta^{s-5} - \int_{A_R} e^{2nf}  \eta^{s-4}\, . 
\end{align*}
Hence, 
\begin{align*}
\frac{n-2}{n} \mathcal{Y}_1 =&-\frac{s-4}{n} \int_{A_R} e^{2(n-1)f}  \mathrm{Re}(f_{\alpha} \eta_{\bar{\alpha}}) \eta^{s-5} + \int_{A_R} e^{2nf}  \eta^{s-4} \\
\leq & \frac{s-4}{n} \int_{A_R} e^{2(n-1)f}  |\partial f| |\partial\eta| \eta^{s-5} + \int_{A_R} e^{2nf}  \eta^{s-4} \\
\leq & \varepsilon \mathcal{Y}_1  +C \int_{A_R}e^{2(n-1)f}   |\partial\eta|^2 \eta^{s-6} + \int_{A_R} e^{2nf}  \eta^{s-4} \, , 
\end{align*}
for every $\varepsilon>0$ and some $C>0$. By choosing $\varepsilon$ small enough we have 
\begin{align*}
\mathcal{Y}_1 \leq & C\left( \int_{A_R}e^{2(n-1)f}   |\partial\eta|^2 \eta^{s-6} + \int_{A_R} e^{2nf}  \eta^{s-4}\right)  \\
\leq & C\left( \int_{A_R} e^{2nf} \eta^{s-4} + \int_{A_R} \eta^{s-4-2n} |\partial\eta|^{2n}\right)\, ,
\end{align*}
and the first estimate follows where we used Young's inequality. Similarly, 
\begin{align*}
\mathcal{Y}_0:=&\int_{A_R}e^{2nf}|\partial f|^2\eta^{s-2} = \int_{A_R} e^{2nf}  \mathrm{Re}(g) \eta^{s-2} - \int_{A_R} e^{2(n+1)f}  \eta^{s-2} \\
= & -\frac{1}{n} \int_{A_R} e^{2nf}  \mathrm{Re}(f_{\alpha\bar{\alpha}}) \eta^{s-s} - \int_{A_R} e^{2(n+1)f}  \eta^{s-2} \\
= & 2 \mathcal{Y}_0 + \frac{s-2}{n} \int_{A_R} e^{2nf}  \mathrm{Re}(f_{\alpha} \eta_{\bar{\alpha}}) \eta^{s-3} - \int_{A_R} e^{2(n+1)f}  \eta^{s-2}\, .
\end{align*}
Hence, 
\begin{align*}
\mathcal{Y}_0=& \int_{A_R} e^{2(n+1)f}  \eta^{s-2}-\frac{s-2}{n} \int_{A_R} e^{2nf}  \mathrm{Re}(f_{\alpha} \eta_{\bar{\alpha}}) \eta^{s-3} \\
\leq & \varepsilon\mathcal{Y}_0 + C \int_{A_R} e^{2nf} |\partial\eta|^2 \eta^{s-4} + \int_{A_R} e^{2(n+1)f}  \eta^{s-2} \, ,
\end{align*}
for every $\varepsilon>0$ and some $C>0$. By choosing $\varepsilon$ small enough we have 
\begin{align*}
\mathcal{Y}_0 \leq C\left( \int_{A_R} e^{2(n+1)f}  \eta^{s-2} + \int_{A_R} e^{2nf} |\partial\eta|^2 \eta^{s-4}\, ,  \right)
\end{align*}
and the second estimate follows. Finally, 
\begin{align*}
\mathcal{Y}_3 = &\int_{A_R} e^{2(n-1)f}|\partial f|^4 \eta^{s-2} = \int_{A_R} e^{2(n-1)f} \mathrm{Re}(g) |\partial f|^2\eta^{s-2} - \int_{A_R} e^{2nf}|\partial f|^2 \eta^{s-2}\\ 
=& -\frac{1}{n}\int_{A_R}e^{2(n-1)f} \mathrm{Re}(f_{\alpha\bar{\alpha}}) |\partial f|^2\eta^{s-2} - \int_{A_R} e^{2nf}|\partial f|^2 \eta^{s-2}\\
=& \frac{1}{n}\int_{A_R}e^{2(n-1)f} \mathrm{Re}(f_{\alpha}|\partial f|^2_{\bar{\alpha}}) \eta^{s-2} + \frac{2(n-1)}{n}\int_{A_R}e^{2(n-1)f} |\partial f|^4 \eta^{s-2} \\ 
&+ \frac{s-2}{2}\int_{A_R} e^{2(n-1)f} \mathrm{Re}(f_{\bar{\alpha}}\eta_{\bar{\alpha}})|\partial f|^2\eta^{s-3} - \int_{A_R} e^{2nf}|\partial f|^2 \eta^{s-2} \, ,
\end{align*}
hence 
\begin{align*}
\frac{n-2}{n} \mathcal{Y}_3 =&  \int_{A_R} e^{2nf}|\partial f|^2 \eta^{s-2}-\frac{1}{n}\int_{A_R}e^{2(n-1)f} \mathrm{Re}(f_{\alpha}|\partial f|^2_{\bar{\alpha}}) \eta^{s-2} \\ &- \frac{s-2}{2}\int_{A_R} e^{2(n-1)f} \mathrm{Re}(f_{\bar{\alpha}}\eta_{\bar{\alpha}})|\partial f|^2\eta^{s-3}\\
=& \int_{A_R} e^{2nf}|\partial f|^2 \eta^{s-2} - \frac{1}{n}\int_{A_R}e^{2(n-1)f} \mathrm{Re}\left(f_{\alpha}(D_{\bar{\alpha}}+E_{\bar{\alpha}})\right) \eta^{s-2}  \\ 
& - \frac{1}{n}\int_{A_R}e^{2(n-1)f} \mathrm{Re}(\bar{g})|\partial f|^2 \eta^{s-2}+  \frac{2}{n}\int_{A_R}e^{2nf} |\partial f|^2\eta^{s-2}  \\  
&- \frac{s-2}{2}\int_{A_R} e^{2(n-1)f} \mathrm{Re}(f_{\bar{\alpha}}\eta_{\bar{\alpha}})|\partial f|^2\eta^{s-3} \\
\leq & \left(1+\frac{1}{n}\right) \int_{A_R} e^{2nf}|\partial f|^2 \eta^{s-2} - \frac{1}{n} \mathcal{Y}_3 + \frac{\sqrt{2}}{n}\int_{A_R}e^{(n-1)f}|\partial f|\sqrt{\mathcal{Q}} \eta^{s-2} \\ 
& + \delta \mathcal{Y}_3 + C \int_{A_R} e^{2(n-1)f}|\partial f|^2 |\partial\eta|^2 \eta^{s-4}\\
\leq & \left(1+\frac{1}{n}\right) \int_{A_R} e^{2nf}|\partial f|^2 \eta^{s-2} - \frac{1}{n} \mathcal{Y}_3 + \varepsilon R^2 \int_{A_R}\mathcal{Q} \eta^{s} \\  
&+ \frac{C}{R^2}\int_{A_R} e^{2(n-1)f}|\partial f|^2 \eta^{s-4} + \delta \mathcal{Y}_3 \,, 
\end{align*}
for every $\delta,\varepsilon>0$ and some $C>0$, where we used \eqref{tensg} and \eqref{eq-estest}. By choosing $\delta$ small enough we conclude 
 \begin{align*}
 \mathcal{Y}_3 \leq C\left( \int_{A_R} e^{2nf}|\partial f|^2 \eta^{s-2} + \frac{1}{R^2} \int_{A_R} e^{2(n-1)f}|\partial f|^2 \eta^{s-4} + \varepsilon R^2 \int_{A_R}\mathcal{Q} \eta^{s}  \right)\, . 
 \end{align*}
\end{proof}

\

\section{Proof of the main results}\label{proofs}

\subsection{Proof of Theorem \ref{teo1}}

Let $(M^{3}, \theta, J, g)$ be a $3$-dimensional complete Sasakian manifold with nonnegative Tanaka-Webster scalar curvature and let $u$ be a nonnegative solution to \eqref{eqcrit1} in $M$. By Bony's maximum principle (see e.g. \cite{Bony}) we have that either $u\equiv 0$ or $u>0$ on $M$. Given $R>0$, let $\eta$ denotes a smooth real cut-off function such that $\eta\equiv 1$ in $B_{R/2}$, $\eta\equiv 0$ in $B_R^c$ and such that $\vert \partial\eta\vert\leq \frac{c}{R}$ in $A_R=B_R\setminus B_{R/2}$. Let $\eps,\beta>0$ small enough and $s>0$ large enough. If $u>0$, then by Lemma \ref{lemma_est}, Lemma  \ref{l-est2} and Lemma \ref{lemma33} we obtain
\begin{align*}
\int_{M} \mathcal{Q}\Psi^{-\beta}\eta^s\leq& C \left( \frac{1}{R^4} \int_{A_R}  \Psi^{-\beta} |\partial f|^2 \eta^{s-4} + \frac{1}{R^2}\int_{A_R}  \Psi^{-\beta} |\partial f|^4 \eta^{s-2} \right)\\
\leq & \frac{C}{R^{6-2\beta}}\vol B_R+C\eps \int_{M} \mathcal{Q}\Psi^{-\beta}\eta^s+ \frac{C}{R^4}\vol B_R\,.
\end{align*}
Using the volume estimate in Theorem \ref{t-comp} we obtain, for every $R>0$,
$$
\int_{B_R} \mathcal{Q}\Psi^{-\beta}\leq C\,. 
$$
Then
$$
\int_M \mathcal{Q}\Psi^{-\beta} < \infty
$$
and, from \eqref{dario},
$$
\int_{B_R}\Psi^{-\beta}|g|^2 \leq C R^2.
$$
From Lemma \ref{lemma0} we obtain
$$
\int_M \mathcal{Q}\Psi^{-\beta}=0\,,
$$
i.e. $\mathcal{Q}\equiv 0$ on $M$. In particular, from \eqref{defQ}, we have
$$
R_{\alpha\bar\alpha}f_{\alpha}f_{\bar\alpha}\equiv 0\quad E_{\alpha\bar\beta}=D_{\alpha\beta}=0\quad G_{\alpha}=D_{\alpha}=E_{\alpha}=0\,.
$$
Since $M$ is $3$-dimensional ($n=1$) we have that $(M^{3}, \theta, J, g)$ is Sasaki-Einstein and
$$
R_{\alpha\bar\alpha}f_{\alpha}f_{\bar\alpha} = R |\partial f|^2 \equiv 0.
$$
By continuity and equation \eqref{eq_g} we have  $\{R\equiv 0\}$ is dense in $M$, thus  $(M^{3}, \theta, J, g)$  is Tanaka-Webster flat (see e.g. \cite{BG}), i.e. its universal cover is isomorphic to the Heinsenberg group $\H^1$. The conclusion follows arguing as in \cite[Section 3]{JL}, see also \cite{CLMR}.

\

\subsection{Proof of Theorem \ref{teo3}}

Let $(M^{2n+1}, \theta, J, g)$, $n\geq 2$, be a $(2n+1)$-dimensional complete Sasakian manifold with  the Tanaka-Webster curvature satisfying
$$
\ricc_H(X,X)\geq 0 \quad \text{for all horizontal vector field }\,X.
$$
and
\begin{equation}\label{vol_ball}
\mathrm{Vol}B_R \leq C R^{2n+2} \, , 
\end{equation}
for some $C>0$ and for all $R>0$ large enough. Let $u$ be a nonnegative solution to \eqref{eqcritn} in $M$ such that 
\begin{equation}\label{energy}
\int_{B_R}u^{\frac{2(n+1)}{n}} \leq C R^{\sigma}\,, 
\end{equation}
for every $R>0$ large enough and for some $\sigma<2$ if $n=2$, $\sigma=2$ if $n\geq 3$. By the maximum principle (see e.g. \cite{Bony}) we have that either $u\equiv 0$ or $u>0$ on $M$. Given $R>0$, let $\eta$ denotes a smooth real cut-off function such that $\eta\equiv 1$ in $B_{R/2}$, $\eta\equiv 0$ in $B_R^c$ and such that $\vert \partial\eta\vert\leq \frac{c}{R}$ in $A_R=B_R\setminus B_{R/2}$. Let $\eps,\beta>0$ small enough and $s>0$ large enough. We split the proof in two cases.

\

\subsection*{Case 1 ($\mathbf{n\geq 3}$):} if $u>0$, then by Lemma \ref{lemma_est} with $\beta=0$ and Lemma \ref{l-estn} we obtain
\begin{align*}
\int_{M} \mathcal{Q}\eta^s\leq& C \left( \frac{1}{R^4} \int_{A_R} e^{2(n-1)f}  |\partial f|^2 \eta^{s-4} + \frac{1}{R^2}\int_{A_R} e^{2(n-1)f}  |\partial f|^4 \eta^{s-2} \right)\\
\leq & C \left( \frac{1}{R^4} \int_{A_R} e^{2(n-1)f}  |\partial f|^2 \eta^{s-4} + \frac{1}{R^2}\int_{A_R} e^{2nf}  |\partial f|^2 \eta^{s-2}+\eps \int_{M} \mathcal{Q}\eta^s \right)\\
\leq & \frac{C}{R^4}\int_{A_R}e^{2nf}\eta^{s-4}+\frac{C}{R^{2n+4}}\vol B_R+\frac{C}{R^2}\int_{A_R} e^{2(n+1)f}  \eta^{s-2}+C\eps \int_{M} \mathcal{Q}\eta^s\,,
\end{align*}
i.e.
$$
\int_{M} \mathcal{Q}\eta^s\leq \frac{C}{R^4}\int_{A_R}e^{2nf}\eta^{s-4} + \frac{C}{R^{2n+4}}\vol B_R+\frac{C}{R^2}\int_{A_R} e^{2(n+1)f}  \eta^{s-2}.
$$
By \eqref{def_f} and H\"older's inequality, we get
\begin{align*}
\int_{M} \mathcal{Q}\eta^s\leq&\frac{C}{R^4}\int_{B_R}u^2 + \frac{C}{R^{2n+4}}\vol B_R+\frac{C}{R^2}\int_{B_R} u^{\frac{2n+2}{n}} \\
\leq & \frac{C}{R^4}\left(\int_{B_R}u^{\frac{2n+2}{n}}\right)^{\frac{n}{n+1}} \left(\vol B_R\right)^{\frac{1}{n+1}}+ \frac{C}{R^{2n+4}}\vol B_R+\frac{C}{R^2}\int_{B_R} u^{\frac{2n+2}{n}}  \\
\leq & C\left( \frac{1}{R^{\frac{2}{n+1}}} + \frac{1}{R^2} +1\right) \leq C 
\end{align*}
where we used \eqref{vol_ball} and \eqref{energy}. Then
$$
\int_M \mathcal{Q} < \infty
$$
and, from \eqref{dario},
$$
\int_{B_R}e^{2(n-1)f}|g|^2 \leq C R^2.
$$
From Lemma \ref{lemma0} we obtain
$$
\int_M \mathcal{Q}=0\,,
$$
i.e. $\mathcal{Q}\equiv 0$ on $M$. In particular, from \eqref{defQ}, we have
$$
R_{\alpha\bar\beta}f_{\beta}f_{\bar\alpha}\equiv 0\quad E_{\alpha\bar\beta}=D_{\alpha\beta}=0\quad G_{\alpha}=D_{\alpha}=E_{\alpha}=0\,.
$$
The conclusion of the theorem follows from the argument at the end of this section.

\

\subsection*{Case 2 ($\mathbf{n= 2}$):} if $u>0$, then by Lemma \ref{lemma_est} with $0<\beta$ small enough, Lemma \ref{l-est2} and Lemma \ref{l-est22} we obtain
\begin{align*}
\int_{M} \Psi^{-\beta}\mathcal{Q}\eta^s\leq& C \left( \frac{1}{R^4} \int_{A_R} e^{2f}  \Psi^{-\beta}|\partial f|^2 \eta^{s-4} + \frac{1}{R^2}\int_{A_R} e^{2f}  \Psi^{-\beta}|\partial f|^4 \eta^{s-2} \right)\\
\leq & C \left( \frac{1}{R^4} \int_{A_R} e^{2f}\Psi^{-\beta}  |\partial f|^2 \eta^{s-4} + \frac{1}{R^2}\int_{A_R} e^{4f}  \Psi^{-\beta}|\partial f|^2 \eta^{s-2}+\eps \int_{M} \Psi^{-\beta}\mathcal{Q}\eta^s \right)\\
\leq & \frac{C}{R^{8-2\beta}}\vol B_R+\frac{C}{R^2}\int_{A_R} e^{(6-2\beta)f}  \eta^{s-2}+\frac{C}{R^4}\int_{A_R} e^{(4-2\beta)f}  \eta^{s-4}+C\eps \int_{M} \Psi^{-\beta} \mathcal{Q}\eta^s\,,
\end{align*}
i.e.
$$
\int_{M} \Psi^{-\beta} \mathcal{Q}\eta^s\leq \frac{C}{R^{8-2\beta}}\vol B_R+\frac{C}{R^2}\int_{A_R} e^{(6-2\beta)f}  \eta^{s-2}+\frac{C}{R^4}\int_{A_R} e^{(4-2\beta)f}  \eta^{s-4}.
$$
By \eqref{def_f} and H\"older's inequality, we get
\begin{align*}
\int_{M} \Psi^{-\beta}\mathcal{Q}\eta^s\leq&\frac{C}{R^{8-2\beta}}\vol B_R+\frac{C}{R^2}\int_{B_R}u^{3-\beta}+\frac{C}{R^4}\int_{B_R} u^{2-\beta} \\
\leq & \frac{C}{R^{8-2\beta}}\vol B_R+\frac{C}{R^2}\left(\int_{B_R}u^3\right)^{\frac{3-\beta}{3}}\left(\vol B_R\right)^{\frac\beta 3}+\frac{C}{R^4}\left(\int_{B_R} u^3\right)^{\frac{2-\beta}{3}}\left(\vol B_R\right)^{\frac{1+\beta}{3}} \\
\leq & C \left(  \frac{1}{R^{2-2\beta}} + \frac{1}{R^{2-\sigma-\left( 2-\frac{\sigma}{3} \right)\beta}} + \frac{1}{R^{2-\frac{2\sigma}{3}-\left( 2-\frac{\sigma}{3} \right)\beta}}\right) 
\end{align*}
which tends to $0$ as $R\rightarrow\infty$, provided $\beta$ is small enough. Thus we deduce $\mathcal{Q}\equiv 0$ on $M$. In particular, from \eqref{defQ}, we have
\begin{equation}\label{eqf}
R_{\alpha\bar\beta}f_{\beta}f_{\bar\alpha}\equiv 0\quad E_{\alpha\bar\beta}=D_{\alpha\beta}=0\quad G_{\alpha}=D_{\alpha}=E_{\alpha}=0\,.
\end{equation}
As $R_{\alpha\bar\beta}\geq 0$, the first identity implies $R_{\alpha\bar\beta}f_{\beta}=0$.

It remains to show that these identities imply that $M=\mathbb{H}^n$ and thus the result follows as in \cite[Section 3]{JL}.

\

\subsection*{Rigidity:} without loss of generality, we assume that $M$ is simply connected. In the following, we work with a unitary frame $\left\{  T_{\alpha}\right\}
$, i.e. $d\theta\left(  T_{\alpha},\overline{T}_{\beta}\right)  =\delta
_{\alpha\beta}$ to get ride of a factor of $2$ in many places. It is also more
convenient to work with $\phi=ce^{-2f}=cu^{-2/n}$. With the proper choice of a
positive constant $c$, the equation (\ref{eqf}) in terms of the function $\phi$ becomes%
\begin{align*}
 R_{\alpha\overline{\beta}}\phi_{\beta}   & =0, \\
\phi_{\alpha,\beta}  &  = 0, \\
\phi_{\alpha,\overline{\beta}}  &  = \phi^{-1}\phi_{\alpha}\phi_{\overline
{\beta}}+\frac{1}{2}\left(  \frac{1}{2}+\phi^{-1}\left\vert \partial
\phi\right\vert ^{2}+i\phi_{0}\right)  \delta_{\alpha\overline{\beta}},\\
\phi_{0,\alpha}  &  =\frac{i}{2}\phi^{-1}\left(  \frac{1}{2}+\phi
^{-1}\left\vert \partial\phi\right\vert ^{2}-i\phi_{0}\right)  \phi_{\alpha
}.
\end{align*}
Moreover, $\phi\rightarrow\infty$ at $\infty$ by the condition $u\rightarrow 0$ at infinity.

Let $\widetilde{\theta}=\phi^{-1}\theta$. In the following, we continue to
work with a local $\theta$-unitary frame $\{T_{\alpha}\}$ and the Reeb vector
field $T$. Then the Reeb vector field of $\widetilde{\theta}$ is
\[
\widetilde{T}=\phi T+i\left(  \phi^{\gamma}T_{\gamma}-\phi
^{\overline{\gamma}}T_{\overline{\gamma}}\right)  .
\]
For $\widetilde{\theta}$, we work with the adapted frame$\left\{  T_{\alpha
},\widetilde{T}\right\}  $ in which the Hermitian metric is given by
$\widetilde{h}_{\alpha\overline{\beta}}=\phi^{-1}\delta_{\alpha\beta}$. We
calculate the Ricci curvature of $\widetilde{\theta}$ by standard formulas
with $w=-\log\phi$,
\begin{align*}
\widetilde{R}_{\alpha\overline{\beta}}  =& R_{\alpha\overline{\beta}%
}-\left\{  \left(  n+2\right)  \left(  w_{\alpha,\overline{\beta}%
}+w_{\overline{\beta},\alpha}\right)  /2+\left[  \Delta_{b}w/2+\left(
n+1\right)  \left\vert \partial w\right\vert ^{2}\right]  \delta
_{\alpha\overline{\beta}}\right\} \\
=&  R_{\alpha\overline{\beta}}-\left\{  -\frac{\left(  n+2\right)  }{2}\left(
\frac{1}{2}\phi^{-1}+\phi^{-2}\left\vert \partial\phi\right\vert ^{2}\right)
\delta_{\alpha\overline{\beta}}\right. \\
&  \left.  +\left[  -\frac{n}{2}\left(  \frac{1}{2}\phi^{-1}+\phi
^{-2}\left\vert \partial\phi\right\vert ^{2}\right)  +\left(  n+1\right)
\phi^{-2}\left\vert \partial\phi\right\vert ^{2}\right]  \delta_{\alpha
\overline{\beta}}\right\} \\
= & R_{\alpha\overline{\beta}}+\frac{\left(  n+1\right)  }{2}\widetilde
{h}_{\alpha\overline{\beta}},
\end{align*}
and its torsion%
\[
\widetilde{A}_{\alpha\beta}=A_{\alpha\beta}+i\left(  w_{\alpha,\beta
}-w_{\alpha}w_{\beta}\right)  =0.
\]
For a function $H$, we write $H_{\alpha,\beta}^{\ast}$ etc. for covariant
derivatives with respect to $\widetilde{\theta}$.

\begin{proposition}
\bigskip The function $\psi:=\phi^{-1}$ satisfies the following three tensor
equations on $\left(  M,\widetilde{\theta}\right)  $
\begin{align*}
\psi_{\alpha,\beta}^{\ast}  &  =0,\\
\psi_{\alpha,\overline{\beta}}^{\ast}-\psi^{-1}\psi_{\alpha}\psi
_{\overline{\beta}}  &  =\frac{1}{2}\left(  g-\psi\right)  \widetilde
{h}_{\alpha\overline{\beta}},\\
\psi_{\widetilde{0},\alpha}^{\ast}  &  =\frac{i}{2}\overline{g}\psi^{-1}%
\psi_{\alpha},
\end{align*}
where $g=\frac{1}{2}\psi+\psi^{-1}\left\vert \partial\psi\right\vert
_{\widetilde{\theta}}^{2}+i\psi_{\widetilde{0}}$ and the subscript
$\widetilde{0}$ stands for differentiation by $\widetilde{T}$. Moreover,%
\[
\widetilde{R}_{\alpha\overline{\beta}}\psi^{\beta}=\frac{\left(  n+1\right)
}{2}\psi_{\alpha}.
\]

\end{proposition}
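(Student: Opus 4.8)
The plan is to derive all four identities from the behaviour of the Tanaka--Webster connection under the conformal change $\widetilde\theta=\phi^{-1}\theta=e^{w}\theta$, $w=-\log\phi$, feeding in the four equations already established for $\phi$. First I would record the elementary relations between the derivatives of $\psi=\phi^{-1}$ and those of $\phi$: one has $\psi_\alpha=-\phi^{-2}\phi_\alpha$, hence $w_\alpha=-\phi^{-1}\phi_\alpha=\psi^{-1}\psi_\alpha$, while a direct computation of $\widetilde T\psi$ from $\widetilde T=\phi T+i(\phi^\gamma T_\gamma-\phi^{\overline\gamma}T_{\overline\gamma})$ gives $\psi_{\widetilde0}=-\phi^{-1}\phi_0$, the two horizontal contributions cancelling since $\phi^\gamma\phi_\gamma=\phi^{\overline\gamma}\phi_{\overline\gamma}=|\partial\phi|^2$. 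I would also verify at the outset that the scalar $g=\tfrac12\psi+\psi^{-1}|\partial\psi|_{\widetilde\theta}^2+i\psi_{\widetilde0}$ equals $\phi^{-1}$ times the complex conjugate of the bracket $\tfrac12+\phi^{-1}|\partial\phi|^2+i\phi_0$ occurring in the $\phi$-equations; here one uses $|\partial\psi|_{\widetilde\theta}^2=\phi\,|\partial\psi|_\theta^2$ (because $\widetilde h_{\alpha\overline\beta}=\phi^{-1}\delta_{\alpha\overline\beta}$), and the conjugation simply records the sign $i\psi_{\widetilde0}=-i\phi^{-1}\phi_0$ produced by the new Reeb field.

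Next I would establish the three tensor identities one at a time, writing $H^\ast_{\alpha,\beta}$, $H^\ast_{\alpha,\overline\beta}$ and $H^\ast_{\widetilde0,\alpha}$ for $\widetilde\theta$-covariant derivatives and using the standard conformal transformation laws for the second covariant derivatives of a function (see \cite{Lee}). For the $(2,0)$-identity I would note that $\phi_{\alpha,\beta}=0$ forces $\psi_{\alpha,\beta}=2\phi^{-3}\phi_\alpha\phi_\beta$, while the transformation law contributes the correction $-(w_\alpha\psi_\beta+w_\beta\psi_\alpha)$ (the torsion terms dropping out since $\widetilde A_{\alpha\beta}=0$); since $w_\alpha=\psi^{-1}\psi_\alpha$, this correction is exactly $-2\phi^{-3}\phi_\alpha\phi_\beta$ and cancels $\psi_{\alpha,\beta}$, giving $\psi^\ast_{\alpha,\beta}=0$. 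For the mixed identity I would substitute the $\phi$-equation for $\phi_{\alpha,\overline\beta}$ and regroup, so that $\psi^\ast_{\alpha,\overline\beta}$ decomposes into the rank-one term $\psi^{-1}\psi_\alpha\psi_{\overline\beta}$ plus a multiple of $\widetilde h_{\alpha\overline\beta}$, whose coefficient collapses to $\tfrac12(g-\psi)$ once the expression for $g$ from the first paragraph is inserted. For the Reeb identity I would compute $\psi^\ast_{\widetilde0,\alpha}$ from the transformation law along the new Reeb direction, combining the $\phi$-equation $\phi_{0,\alpha}=\tfrac i2\phi^{-1}(\tfrac12+\phi^{-1}|\partial\phi|^2-i\phi_0)\phi_\alpha$ with $\psi_{\widetilde0}=-\phi^{-1}\phi_0$ to recover $\tfrac i2\overline g\,\psi^{-1}\psi_\alpha$.

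The Ricci identity follows at once from the transformation formula $\widetilde R_{\alpha\overline\beta}=R_{\alpha\overline\beta}+\tfrac{n+1}{2}\widetilde h_{\alpha\overline\beta}$ derived just above: contracting with the $\widetilde\theta$-gradient $\psi^\beta$, the curvature term $R_{\alpha\overline\beta}\psi^\beta$ vanishes because the first $\phi$-equation $R_{\alpha\overline\beta}\phi_\beta=0$ states precisely that the Webster--Ricci tensor annihilates the common horizontal gradient direction of $\phi$ and $\psi$, while $\tfrac{n+1}{2}\widetilde h_{\alpha\overline\beta}\psi^\beta=\tfrac{n+1}{2}\psi_\alpha$ is just the raised gradient lowered back; hence $\widetilde R_{\alpha\overline\beta}\psi^\beta=\tfrac{n+1}{2}\psi_\alpha$.

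The main obstacle I anticipate is bookkeeping rather than anything conceptual: pinning down the exact coefficients in the conformal transformation laws for the Hessian and for the Reeb derivative, and in particular handling the new Reeb direction with care, since $\widetilde T$ differs from $T$ by a horizontal vector field and the third identity is taken along $\widetilde T$. Keeping track of the conformal weights, namely the powers of $\phi$ generated by raising and lowering with $\widetilde h_{\alpha\overline\beta}=\phi^{-1}\delta_{\alpha\overline\beta}$, is what makes the two sides of the mixed and Reeb identities match; once the transformation formulas are fixed and the $\phi$-equations substituted, each identity reduces to a purely algebraic cancellation.
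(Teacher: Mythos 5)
Your proposal is correct and follows essentially the same route as the paper: the transformation law for the Tanaka--Webster connection under $\widetilde\theta=\phi^{-1}\theta$ with $w=-\log\phi$, substitution of the four $\phi$-equations (including the direct computation $\psi_{\widetilde 0}=-\phi^{-1}\phi_0$ with the horizontal terms of $\widetilde T$ cancelling, and the identification $g=\phi^{-1}\overline{\left(\tfrac12+\phi^{-1}\vert\partial\phi\vert^2+i\phi_0\right)}$), and contraction of $\widetilde R_{\alpha\overline\beta}=R_{\alpha\overline\beta}+\tfrac{n+1}{2}\widetilde h_{\alpha\overline\beta}$ with the gradient using $R_{\alpha\overline\beta}\phi_\beta=0$. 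The only differences are cosmetic --- you apply the transformation law to $\psi$ directly where the paper first converts $\psi$-derivatives into $\phi$-derivatives, and your aside about torsion in the $(2,0)$ identity is unnecessary (no torsion term appears in the transformation of the horizontal $(2,0)$ Hessian of a function) but harmless.
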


\begin{proof}
If we write the Tanaka-Webster connection for $\theta$ and that for
$\widetilde{\theta}$ as
\[
\nabla T_{\alpha}=\omega_{\alpha}^{\beta}\otimes T_{\beta}\text{, and
}\widetilde{\nabla}T_{\alpha}=\widetilde{\omega}_{\alpha}^{\beta}\otimes
T_{\beta},
\]
respectively, then
\begin{align*}
\widetilde{\omega}_{\alpha}^{\beta}  = &  \omega_{\alpha}^{\beta}+w_{\gamma
}\delta_{\alpha}^{\beta}\theta^{\gamma}+\left(  w_{\alpha}\theta^{\beta
}-w^{\beta}\delta_{\alpha\overline{\gamma}}\theta^{\overline{\gamma}}\right)
\\
&  +i\left[  w_{,\alpha}^{\beta}+w_{\alpha}w^{\beta}+w_{\gamma
}w^{\gamma}\delta_{\alpha}^{\beta}\right]  \theta.
\end{align*}
With this, one can verify the following formulas%
\begin{align*}
H_{\alpha,\beta}^{\ast}  &  =H_{\alpha,\beta}+\phi^{-1}\left(  \phi_{\alpha
}H_{\beta}+H_{\alpha}\phi_{\beta}\right)  ,\\
H_{\alpha,\overline{\beta}}^{\ast}  &  =H_{\alpha,\overline{\beta}}-\phi
^{-1}\phi_{\gamma}H_{\gamma}\delta_{\alpha\overline{\beta}}\, ,
\end{align*}
and hence the first two formulas for $\psi$. We do it for the second formula. As%
\begin{align*}
\psi_{\alpha,\overline{\beta}}^{\ast}  &  =-\phi^{-2}\left(  \phi
_{\alpha,\overline{\beta}}^{\ast}-2\phi^{-1}\phi_{\alpha}\phi_{\beta}\right)
\\
&  =-\phi^{-2}\left(  \phi_{\alpha,\overline{\beta}}-\phi^{-1}\left\vert
\partial\phi\right\vert ^{2}\delta_{\alpha\overline{\beta}}-2\phi^{-1}%
\phi_{\alpha}\phi_{\beta}\right) \\
&  =-\phi^{-2}\left(  -\phi^{-1}\phi_{\alpha}\phi_{\overline{\beta}}+\frac
{1}{2}\left(  \frac{1}{2}-\phi^{-1}\left\vert \partial\phi\right\vert
^{2}+i\phi_{0}\right)  \delta_{\alpha\overline{\beta}}\right) \\
&  =\psi^{-1}\psi_{\alpha}\psi_{\beta}+\frac{1}{2}\left(  -\frac{1}{2}%
\psi+\psi^{-1}\left\vert \partial\psi\right\vert _{\widetilde{\theta}}%
^{2}+i\phi\psi_{0}\right)  h_{\alpha\overline{\beta}},\\
&  =\psi^{-1}\psi_{\alpha}\psi_{\beta}+\frac{1}{2}\left(  -\frac{1}{2}%
\psi+\psi^{-1}\left\vert \partial\psi\right\vert _{\widetilde{\theta}}%
^{2}+i\psi_{\widetilde{0}}\right)  h_{\alpha\overline{\beta}}%
\end{align*}
which is exactly the second equation. To see the third equation, we have
\begin{align*}
\psi_{0,\alpha}^{\ast}  &  =T_{\alpha}\widetilde{T}\phi^{-1}\\
&  =T_{\alpha}\left(  -\phi^{-1}\phi_{0}\right) \\
&  =-\phi^{-1}\phi_{0\alpha}+\phi^{-2}\phi_{0}\phi_{\alpha}\\
&  =-\frac{i}{2}\phi^{-2}\left(  \frac{1}{2}+\phi^{-1}\left\vert \partial
\phi\right\vert ^{2}-i\phi_{0}\right)  \phi_{\alpha}+\phi^{-2}\phi_{0}%
\phi_{\alpha}\\
&  =-\frac{i}{2}\phi^{-2}\left(  \frac{1}{2}+\phi^{-1}\left\vert \partial
\phi\right\vert ^{2}+i\phi_{0}\right)  \phi_{\alpha}\\
&  =\frac{i}{2}\left(  \frac{1}{2}\psi+\psi^{-2}\left\vert \partial
\psi\right\vert ^{2}-i\phi\psi_{0}\right)  \psi^{-1}\psi_{\alpha}\\
&  =\frac{i}{2}\left(  \frac{1}{2}\psi+\psi^{-1}\left\vert \partial
\psi\right\vert _{\widetilde{\theta}}^{2}-i\psi_{\widetilde{0}}\right)
\psi^{-1}\psi_{\alpha}.
\end{align*}
The last equation follows from the fact $R_{\alpha\overline{\beta}}\phi
_{\beta}=0$.
\end{proof}

From now on, we work on $\left(  M,\widetilde{\theta}\right)  $ and a
$\widetilde{\theta}$-unitary frame (we drop the $\widetilde{}$ and $\ast$
everywhere). To summarize, the function $\psi$ is nonconstant and satisfies%

\begin{align*}
 R_{\alpha\overline{\beta}}\psi_{\beta} &=\frac{\left(  n+1\right)  }{2}%
\psi_{\alpha}\,, \\
\psi_{\alpha,\beta}  &  =0, \\
\psi_{\alpha,\overline{\beta}}  &  =\psi^{-1}\psi_{\alpha}\psi_{\overline
{\beta}}+\frac{1}{2}\left(  -\frac{1}{2}\psi+\psi^{-1}\left\vert \partial
\psi\right\vert ^{2}+i\psi_{0}\right)  \delta_{\alpha\overline{\beta}%
},\\
\psi_{0,\alpha}  &  =\frac{i}{2}\left(  \frac{1}{2}\psi+\psi
^{-1}\left\vert \partial\psi\right\vert ^{2}-i\psi_{0}\right)
\psi^{-1}\psi_{\alpha}.
\end{align*}
Since $w=\log\psi$, then%
\begin{align*}
R_{\alpha\overline{\beta}}w_{\beta} & =\frac{\left(  n+1\right)  }
{2}w_{\alpha}.\\
w_{\alpha,\beta}  &  =-w_{\alpha}w_{\beta},\\
w_{\alpha,\overline{\beta}}  &  =\frac{1}{2}\left(  -\frac{1}{2}+\left\vert
\partial w\right\vert ^{2}+iw_{0}\right)  \delta_{\alpha
\overline{\beta}},\\
w_{0,\alpha}  &  =-\frac{1}{2}w_{0}w_{\alpha}+\frac{i}{2}\left(
\frac{1}{2}+\left\vert \partial w\right\vert ^{2}\right)  w_{\alpha}.
\end{align*}
The following argument is similar to Section 3 in \cite{W1}. We claim that $u$
is CR pluriharmonic Indeed, when $n\geq2$ \ this follows from the second
equation. When $n=1$, differentiating the second equation and simplifying using
all three yields%
\[
w_{\overline{1},11}=\frac{1}{2}\left(  w_{1,1}w_{\overline{1}}+w_{1}%
w_{\overline{1},1}-iw_{0,1}\right)  =0.
\]
As $A_{11}=0$, it follows that $u$ is CR pluriharmonic by \cite[Proposition
3.4]{Lee}. As $M$ is simply connected, $w$ is the real part of a CR
holomorphic function $w+iv$:
\[
v_{\alpha}=-iw_{\alpha},\, v_{\overline{\beta}}=iw_{\overline
{\beta}}.
\]
We also have
\begin{align*}
iv_{0}\delta_{\alpha\beta}  &  =v_{\alpha,\overline{\beta}%
}-v_{\overline{\beta},\alpha}\\
&  =-iw_{\alpha,\overline{\beta}}-iw_{\overline{\beta},\alpha
}\\
&  =-2iw_{\alpha,\overline{\beta}}-w_{0}\delta_{\alpha\beta}\\
&  =-i\left(  -\frac{1}{2}+\left\vert \partial w\right\vert
^{2}\right)  \delta_{\alpha\overline{\beta}}\, . 
\end{align*}
Thus
\[
v_{0}=\frac{1}{2}-\left\vert \partial w\right\vert ^{2}.
\]
With this we can rewrite the equations satisfied by $w$ as%
\begin{align*}
w_{\alpha,\beta}  &  =-w_{\alpha}w_{\beta},\\
w_{\alpha,\overline{\beta}}  &  =\frac{1}{2}\left(  -v_{0}+i%
w_{0}\right)  \delta_{\alpha\overline{\beta}},\\
w_{0,\alpha}  &  =-\frac{1}{2}w_{0}w_{\alpha}+\frac{i}{2}\left(
1-v_{0}\right)  w_{\alpha}\, . 
\end{align*}
We have $\psi=\left\vert e^{\left(  w+iv\right)  /2}\right\vert ^{2}$. As
$\psi\rightarrow0$ at $\infty$, we have $w\rightarrow-\infty$ at $\infty$ and $e^{\left(  w+iv\right)
/2}\rightarrow0$ at $\infty$. Let 
$$
F=e^{\frac{w}{2}}\cos \left(\tfrac{v}{2}\right)+C\, , 
$$
for some constant $C$ to be determined. By the decay condition, we know that $F$ has a critical
point. Moreover $F$ is nonconstant.

\begin{proposition}
\label{crh}We have%
\begin{align*}
R_{\alpha\overline{\beta}}F_{\beta}  &=\frac{\left(  n+1\right)  }%
{2}F_{\alpha}, \\
F_{\alpha,\beta}  &  =0, \\
F_{\alpha,\overline{\beta}}  &  =\frac{1}{2}\left[  -\left(  e^{w/2}\sin
v/2\right)  _{0}+iF_{0}\right]  \delta_{\alpha\overline{\beta}},\\
F_{0,\alpha}  &  =\frac{i}{2}F_{\alpha},\\
F_{0,0}  &  =-\frac{1}{2}\left(  e^{w/2}\sin v/2\right)  _{0}.
\end{align*}

\end{proposition}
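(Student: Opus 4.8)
The plan is to read all five identities off a single object, the CR holomorphic function $G:=e^{(w+iv)/2}$. Since $w+iv$ is CR holomorphic (as just established), so is $G$, i.e. $G_{\overline\alpha}=0$; by Euler's formula $F-C=\mathrm{Re}\,G=e^{w/2}\cos(v/2)$ while $H:=\mathrm{Im}\,G=e^{w/2}\sin(v/2)$, so $G=(F-C)+iH$. Because $T$ is real and $C$ is constant, every covariant derivative of $F$ is the real part of the corresponding derivative of $G$, and in particular $G_0=F_0+iH_0$. Thus the whole proposition should drop out of one computation of the covariant derivatives of $G$ from the four structure equations for $w$, followed by taking real parts. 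The two bookkeeping points I must respect throughout are that $\overline G$ and all its iterated barred derivatives vanish ($\overline G_\alpha=\overline{G_{\overline\alpha}}=0$, and hence $\overline G_{\alpha,\beta}=\overline G_{\alpha,\overline\beta}=0$ by the connection terms being proportional to these), and the unitary-frame normalization $d\theta(T_\alpha,\overline T_\beta)=\delta_{\alpha\beta}$, which rescales the commutation relations.

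First I would compute the horizontal derivatives. From $v_\alpha=-iw_\alpha$ one gets $G_\alpha=\tfrac12(w+iv)_\alpha G=w_\alpha G$, and since $\overline G_\alpha=0$ this yields $F_\alpha=\tfrac12 G_\alpha=\tfrac12 w_\alpha G$; the first identity $R_{\alpha\overline\beta}F_\beta=\tfrac{n+1}{2}F_\alpha$ is then immediate from $R_{\alpha\overline\beta}w_\beta=\tfrac{n+1}{2}w_\alpha$. Using $w_{\alpha,\beta}=-w_\alpha w_\beta$ gives $G_{\alpha,\beta}=w_{\alpha,\beta}G+w_\alpha G_\beta=0$, hence $F_{\alpha,\beta}=0$. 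For the mixed derivative, $G_{\alpha,\overline\beta}=w_{\alpha,\overline\beta}G=\tfrac12(-v_0+iw_0)\delta_{\alpha\overline\beta}G$, whence $F_{\alpha,\overline\beta}=\tfrac14(-v_0+iw_0)\delta_{\alpha\overline\beta}G$, and the stated form follows from the purely formal identity $\tfrac12(-v_0+iw_0)G=iG_0=i(F_0+iH_0)=-H_0+iF_0$. Finally, using zero torsion ($G_{0,\alpha}=G_{\alpha,0}$) and $w_{0,\alpha}=-\tfrac12 w_0 w_\alpha+\tfrac i2(1-v_0)w_\alpha$, a short computation shows the $w_0$- and $v_0$-contributions cancel, leaving $G_{0,\alpha}=\tfrac i2 w_\alpha G=\tfrac i2 G_\alpha$, so that $F_{0,\alpha}=\tfrac i2 F_\alpha$.

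The one genuinely delicate point, and the step I expect to be the main obstacle, is the last identity $F_{0,0}=-\tfrac12(e^{w/2}\sin(v/2))_0$: there is no structure equation available for $w_{0,0}$ (or $v_{0,0}$), so expanding $F_{0,0}=\mathrm{Re}(T^2 G)$ directly would leave an uncontrolled term. The way around this is to avoid the second Reeb derivative of $w$ altogether. Set $P:=G_0-\tfrac i2 G$ and observe that $P_\alpha=G_{0,\alpha}-\tfrac i2 G_\alpha=0$ by the identity just proved for $G$, while $P_{\overline\alpha}=G_{0,\overline\alpha}-\tfrac i2 G_{\overline\alpha}=0$ since $G_{0,\overline\alpha}=G_{\overline\alpha,0}=0$ (again because $G_{\overline\alpha}\equiv 0$). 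Thus $dP$ annihilates the horizontal distribution $H(M)$, which is bracket generating because $\theta$ is a contact form; as $M$ is connected, Chow's theorem forces $P$ to be constant. Consequently $F_0=\mathrm{Re}\,G_0=\mathrm{Re}(\tfrac i2 G)+\mathrm{Re}\,P=-\tfrac12 H+\mathrm{const}$, and differentiating along $T$ (using $\nabla_T T=0$) gives $F_{0,0}=-\tfrac12 H_0=-\tfrac12(e^{w/2}\sin(v/2))_0$, as claimed. Note that all five identities involve only derivatives of $F$, so they hold irrespective of the value of the constant $C$.
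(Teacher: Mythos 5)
Your proof is correct, and it differs from the paper's in a substantive way. For the first four identities the paper works entirely with the real expression $F=e^{w/2}\cos(v/2)+C$ and verifies the formulas by direct trigonometric computation from the structure equations for $w$ (it carries this out explicitly for $F_{0,\alpha}$); your complexification $G=e^{(w+iv)/2}$, with $G_\alpha=w_\alpha G$, $F_\alpha=\tfrac12 G_\alpha$ and all barred derivatives of $\overline G$ vanishing, is the same calculus repackaged, but it collapses each verification to a one-line chain rule --- I checked the cancellations ($G_{\alpha,\beta}=(w_{\alpha,\beta}+w_\alpha w_\beta)G=0$, $G_{\alpha,\overline\beta}=w_{\alpha,\overline\beta}G=iG_0\,\delta_{\alpha\overline\beta}$ via $G_0=\tfrac12(w_0+iv_0)G$, and the cancellation of the $w_0$- and $v_0$-terms in $G_{\alpha,0}$) and they are all right, including your use of zero torsion for $\widetilde\theta$ to commute $0$ and $\alpha$. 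Where you genuinely diverge is the fifth identity: the paper differentiates $F_{0,\alpha}=\tfrac{i}{2}F_\alpha$ in $\overline\beta$, commutes the horizontal derivatives (producing the term $iF_{0,0}\delta_{\alpha\overline\beta}$ in the unitary frame), uses the reality of $F_0$, and substitutes the third identity to solve for $F_{0,0}$ --- a purely local, pointwise commutation argument. You instead exhibit the first integral $P=G_0-\tfrac{i}{2}G$, whose horizontal derivatives vanish by the fourth identity and CR holomorphy, and invoke bracket generation of the contact distribution (Chow--Rashevskii, needing only connectedness, which holds here) to conclude $P$ is constant; this yields the pointwise relation $F_0=-\tfrac12 e^{w/2}\sin(v/2)+\mathrm{const}$, strictly stronger than the paper's $F_{0,0}$ identity, from which $F_{0,0}=-\tfrac12\bigl(e^{w/2}\sin(v/2)\bigr)_0$ follows by one $T$-derivative (using $\nabla_T T=0$, as you note). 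You also correctly identified and sidestepped the real trap --- there is no structure equation for $w_{0,0}$ --- which is exactly what the paper's commutation trick is designed to avoid. In short: the paper's route is local and self-contained; yours trades a mild global input for cleaner computations and extra information (the first integral), and both are sound.
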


\begin{proof}
These formula are proved by direct calculations. For example, to prove the third
one we first observe as $v_{\alpha}=-iw_{\alpha}$ and $\theta$ is
torsion-free
\[
v_{0,\alpha}=v_{\alpha,0}=-iw_{\alpha,0}.
\]
Then we compute using the third equation for $w$%
\begin{align*}
F_{0,\alpha}  &  =\frac{1}{2}e^{w/2}\left[  \left(  w_{0,\alpha}-\frac{1}%
{2}v_{0}v_{\alpha}+\frac{1}{2}w_{0}w_{\alpha}\right)  \cos\frac{v}{2}-\left(
v_{0,\alpha}+\frac{1}{2}w_{0}v_{\alpha}+\frac{1}{2}v_{0}w_{\alpha}\right)
\sin\frac{v}{2}\right] \\
&  =\frac{1}{2}e^{w/2}\left[  \left(  w_{0,\alpha}+\frac{i}{2}%
v_{0}w_{\alpha}+\frac{1}{2}w_{0}w_{\alpha}\right)  \cos\frac{v}{2}-\left(
-iw_{0,\alpha}-\frac{i}{2}w_{0}w_{\alpha}+\frac{1}{2}%
v_{0}w_{\alpha}\right)  \sin\frac{v}{2}\right] \\
&  =\frac{1}{2}e^{w/2}\left(  \frac{i}{2}w_{\alpha}\cos\frac{v}%
{2}+\frac{1}{2}w_{\alpha}\sin\frac{v}{2}\right) \\
&  =\frac{i}{2}e^{w/2}\left(  \frac{1}{2}w_{\alpha}\cos\frac{v}%
{2}-\frac{1}{2}v_{\alpha}\sin\frac{v}{2}\right) \\
&  =\frac{i}{2}F_{\alpha}.
\end{align*}
The first and second formulas can be proved similarly. To prove the last identity, we differentiate the third one%
\begin{align*}
\frac{i}{2}F_{\alpha,\overline{\beta}}  &  =F_{0,\alpha\overline
{\beta}}\\
&  =F_{0,\overline{\beta}\alpha}+iF_{0,0}\delta_{\alpha\overline
{\beta}}\\
&  =\overline{F_{0,\beta\overline{\alpha}}}+iF_{0,0}\delta
_{\alpha\overline{\beta}}\\
&  =-\frac{i}{2}\overline{F_{\beta,\overline{\alpha}}}+iF_{0,0}\delta_{\alpha\overline{\beta}}\, . 
\end{align*}
Using the second identity we obtain%
\[
F_{0,0}=-\frac{1}{2}\left(  e^{w/2}\sin\frac{v}{2}\right)  _{0}%
\]

\end{proof}
Let $D^{2}F$ denote the Hessian of $F$ with respect to the adapted Riemannian metric
$g_{\widetilde{\theta}}$. By Proposition A.3 in the \cite{W1}, we obtain from
Proposition \ref{crh}%
\begin{equation}
D^{2}F=-\frac{1}{2}\left(  e^{w/2}\sin\frac{v}{2}\right)  _{0}g_{\widetilde
{\theta}}. \label{hf}%
\end{equation}
Let $\chi=\frac{1}{2}\left(  e^{w/2}\sin\frac{v}{2}\right)  _{0}$. Then
$D^{2}F=-\chi g_{\widetilde{\theta}}$. Working with a $\widetilde{\theta}%
$-unitary frame, we differentiate (using the Levi-Civita connection of
$g_{\widetilde{\theta}}$) the above equation to get%
\begin{align*}
-\chi_{i}  &  =F_{ji,j}\\
&  =F_{jj,i}+\widehat{R}_{ijlj}w_{l}\\
&  =\left(  \Delta w\right)  _{i}+\widehat{R}_{il}w_{l}\\
&  =-\left(  2n+1\right)  \chi_{i}+\widehat{R}_{il}w_{l},
\end{align*}
where $\widehat{R}$ is the curvature tensor of $g_{\widetilde{\theta}}$ and
the Latin indices take values $0,1,\cdots n,\overline{1},\cdots,\overline{n}$.
Thus $2n\chi_{i}=\widehat{R}_{il}w_{l}$. More specifically,
\begin{align*}
2n\chi_{\alpha}  &  =\widehat{R}_{\alpha0}F_{0}+\widehat{R}_{\alpha\beta
}F_{\overline{\beta}}+\widehat{R}_{\alpha\overline{\beta}}F_{\beta},\\
2n\chi_{0}  &  =\widehat{R}_{00}F_{0}+\widehat{R}_{0\beta}F_{\overline{\beta}%
}+\widehat{R}_{0\overline{\beta}}F_{\beta}.
\end{align*}
By the formulas relating $\widehat{R}_{ij}$ and the pseudohermitian Ricci
curvature (cf. Proposition A.4 in \cite{W1}), they reduce to%
\[
2n\chi_{\alpha}=R_{\alpha\overline{\beta}}F_{\beta}-\frac{1}{2}F_{\alpha
}\,, \quad 2n\chi_{0}=\frac{n}{2}F_{0}.
\]
In view of the equation $R_{\alpha\overline{\beta}}F_{\beta}=\frac{\left(
n+1\right)  }{2}F_{\alpha}$, we conclude that $\chi-\frac{1}{4}F$ is constant.
By choosing $C$, we can assume $\chi=\frac{1}{4}F$. Therefore 
$$
D^{2}F=-\frac{F}{4}g_{\widetilde{\theta}}\, .
$$
This equation was studied by Obata in \cite{Obata}. If $\Sigma=\left\{
F=\lambda\right\}  $ is a regular let set, it is easy to see that the metric locally splits%
\[
g_{\widetilde{\theta}}=dt^{2}+\left[  B\cos\left(\frac{t}{2}\right)-A\sin\left(\frac{t}
{2}\right)\right]^{2}h_{\text{round}},
\]
for some constants $A,B$, where $h_{\text{round}}$ is the standard metric on $\mathbb{S}^{2n}$, and $F=a\cos\left(\frac{t}%
{2}\right)$ for some constant $a$. In other words, $\left(  M,\widetilde{\theta}\right)  $ is locally
isometric to a piece of $\left(  \mathbb{S}^{2n+1},4g_{\text{round}}\right)  $, where
$g_{\text{round}}$ is the standard metric, and $F\left(  z\right)  =\operatorname{Re}%
\left(  z\cdot\overline{\mu}\right)$ is a linear function. Moreover, by
continuation and the above splittings $g_{\widetilde{\theta}}$ is globally spherical. By routine calculations it is easy to see that $\left(
M,\widetilde{\theta}\right)  $ has constant pseudohermitian curvature and
$\theta$ is pseudohermitian flat. Since it is simply connected and complete,
$\left(  M,\theta\right)  $ is CR\ equivalent to $\left(  \mathbb{H}%
^{n},\Theta\right)$, where $\Theta$ is the standard contact form on the Heisenberg group. This concludes the proof of Theorem \ref{teo3} (and of Theorem \ref{teo2}).

\

\subsection{Proof of Corollary \ref{Sob_gen}} We first recall (see e.g. \cite[Theorem 2.6]{BKim}) that the assumption 
$$
\mathrm{Vol}B_R\geq cR^{2n+2}\, ,
$$
for some $c>0$ and every $R>0$ large enough is equivalent to the following Folland-Stein-Sobolev inequality 
$$
\left(\int_M f^{\frac{2n+2}{n}}\right)^{\frac{2n}{2n+2}}\leq C \int_M |\partial f|^2 \quad \text{ for all } f\in\mathcal{D}^{1,2}(M)\, .
$$
Therefore, from Lemma \ref{Moser}, $u$ tends to zero at infinity and the result follows from Theorem \ref{teo2}.

\

\subsection{Proof of Corollary \ref{Sob}} Assume, by contradiction, that there exists $u\in\mathcal{D}^{1,2}(M)$ which minimizes $\mathcal{S}_\theta(M)$. Without loss of generality we can assume that $u$ is  positive. Since $u$ is a minimizer we have that $S_{\theta}(M)>0$, then the Folland-Stein-Sobolev inequality holds and the contradiction follows from Corollary \ref{Sob_gen}.

\

\

\begin{ackn}
\noindent
The first and the third authors are members of and are partially supported by GNSAGA, Gruppo Nazionale per le Strutture Algebriche, Geometriche e le loro Applicazioni of INdAM. The second author is member of and is partially supported by GNAMPA, Gruppo Nazionale per l'Analisi Matematica, la Probabilit\`a e le loro Applicazioni of INdAM.

Moreover, the first and the third authors are partially supported by the project PRIN 2022 ``Differential-geometric aspects of manifolds via Global Analysis'', the second author is partially supported by the project PRIN 2022 ``Geometric-Analytic Methods for PDEs and Applications''.
\end{ackn}

\

\noindent{\bf Data availability statement}

\noindent Data sharing not applicable to this article as no datasets were generated or analysed during the current study.

\

\noindent {\bf Competing Interests}

\noindent The authors declare no competing interests.

\

\

\

\

\end{document}